\newtheorem*{teo*}{Theorem}
\newtheorem{teo}{Theorem}[section]
\newtheorem{cor}{Corollary}[section]
\newtheorem{lema}{Lemma}[section]
\newtheorem{prop}{Proposition}[section]
\newcommand{\bi}{\begin{itemize}}
\newcommand{\ei}{\end{itemize}}
\theoremstyle{definition}
\theoremstyle{remark}
\newtheorem{obs}[]{Remark}
\newcommand{\T}{\mathbb{T}}
\newcommand{\R}{\mathbb{R}}
\newcommand{\Z}{\mathbb{Z}}
\newcommand{\N}{\mathbb{N}}
\newcommand{\Q}{\mathbb{Q}}
\newcommand{\homeo}{\textrm{Homeo}_0(\T^2)}
\author{Alejandro Passeggi}
\address{TU Dresden, Institute for Analysis, Emmy Noether Research Group 'Low dimensional and non autonoumos Dynamics'.}
\email{alepasseggi@gmail.com}
\title{Rational Polygons as rotation sets of generic homeomorphisms of the two torus}
\begin{document}

\begin{abstract}

We prove the existence of an open and dense set $\mathcal{D}\subset\textrm{Homeo}_0(\T^2)$ (where $\textrm{Homeo}_0(\T^2)$ denotes the set of toral homeomorphisms homotopic to the identity) such that the rotation set of any element in $\mathcal{D}$ is a rational polygon. We also extend this result to the set of axiom A diffeomorphisms in $\homeo$. Further we observe the existence of minimal sets whose rotation set is a non-trivial segment, for an open set in $\homeo$.

\end{abstract}

\maketitle

\begin{section}{Introduction.}\label{intro}

Rotation theory in the two dimensional torus can be seen as a first attempt to generalize Poincar\'{e}'s combinatorial theory on the dynamics of orientation-preserving circle homeomorphisms. In this theory, a complete description of the qualitative dynamics associated to $f\in\textrm{Homeo}_+(\mathbb{S}^1)$ (where $\textrm{Homeo}_+(\mathbb{S}^1)$ denotes the set of orientation-presearving homeomorphisms) can be given via the so called rotation number $\rho(F)$, which is defined by $\rho(F):=\lim_n \frac{F^n(x)-x}{n}$ for some $F$ lift of $f$ and $x\in\R$, but does not depend on $x$.

In the context of the two dimensional torus one considers the space of homeomorphisms homotopic to the identity $\homeo$. However since it is impossible to ensure the independence of the limit on $x$ or even the existence as in the circle case, one has to consider the set of all possible limits. Let $\T^2:=\R^2/\Z^2$ be the two torus, $\pi:\R^2\rightarrow \R^2/_{\Z^2}$ be the projection, $f\in\homeo$, $C\subset \T^2$ a compact and $f$-invariant set and $F:\R^2\rightarrow \R^2$ lift of $f$. The rotation set on $C$ is defined by

$$ \rho_C(F)=\left\{v\in\R^2: v=\lim_i\frac{F^{n_i}(x_i)-x_i}{n_i}:x_i\in\pi^{-1}(C), n_i\nearrow +\infty\right\} $$

We refer to $\rho_{\T^2}(F)$ as the rotation set of $f$ and denote this by $\rho(F)$. Further we define $\rho(f):=\rho(F)(\mbox{ mod }\Z^2)$. The following fundamental result is due to M. Misiurewicz and K. Zieman (\cite{MiZi1}).

\begin{teo}\label{convexity}

$\rho(F)$ is a compact and convex set for any lift $F$.

\end{teo}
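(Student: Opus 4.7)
The plan is to identify $\rho(F)$ with the image of the simplex of $f$-invariant Borel probability measures under integration of the displacement cocycle; from that identification, both properties drop out immediately. Since $F$ is a lift of $f\in\homeo$, the displacement $\phi(x):=F(x)-x$ is $\Z^2$-periodic and descends to a continuous map $\phi:\T^2\to\R^2$. Writing $\mathcal{M}(f)$ for the set of $f$-invariant Borel probability measures on $\T^2$, set
$$\rho_{\mathrm{inv}}(F):=\left\{\int_{\T^2}\phi\,d\mu:\mu\in\mathcal{M}(f)\right\}\subset\R^2.$$

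The first step is to prove $\rho(F)=\rho_{\mathrm{inv}}(F)$. The inclusion $\rho_{\mathrm{inv}}(F)\subseteq\rho(F)$ follows from the cocycle identity $F^n(x)-x=\sum_{k=0}^{n-1}\phi(\pi(F^k x))$ combined with Birkhoff's ergodic theorem applied to ergodic measures (ergodic decomposition handles the non-ergodic case). For the reverse inclusion, given $v=\lim_i(F^{n_i}(x_i)-x_i)/n_i\in\rho(F)$ I would form the empirical measures $\mu_i:=\frac{1}{n_i}\sum_{k=0}^{n_i-1}\delta_{\pi(F^k x_i)}$. By the Banach--Alaoglu theorem, some subsequence converges weak-$*$ to a probability measure $\mu$ on $\T^2$; a standard Krylov--Bogolyubov argument, using that $f_*\mu_i-\mu_i$ tends to zero in total variation, yields $\mu\in\mathcal{M}(f)$. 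The cocycle identity gives $\int\phi\,d\mu_i=(F^{n_i}(x_i)-x_i)/n_i$, so passing to the limit along the subsequence yields $\int\phi\,d\mu=v$, i.e.\ $v\in\rho_{\mathrm{inv}}(F)$.

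With $\rho(F)=\rho_{\mathrm{inv}}(F)$ in hand, compactness and convexity follow automatically: $\mathcal{M}(f)$ is weak-$*$ compact (Banach--Alaoglu) and convex, and the map $\mu\mapsto\int\phi\,d\mu$ is affine and weak-$*$ continuous since $\phi$ is a continuous function on the compact space $\T^2$; hence $\rho(F)$ is the image of a compact convex set under a continuous affine map, and is therefore itself compact and convex.

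The main technical obstacle is the $f$-invariance of the weak-$*$ limit in the reverse inclusion, which requires the telescoping bound $\left|\int g\circ f\,d\mu_i-\int g\,d\mu_i\right|\le 2\|g\|_\infty/n_i$ for every continuous $g:\T^2\to\R$; once this is established, everything else is essentially bookkeeping with continuous integrals against weak-$*$ convergent measures.
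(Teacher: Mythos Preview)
The paper does not prove this theorem; it is quoted as the result of Misiurewicz and Ziemian with a reference to \cite{MiZi1}. So there is no in-paper proof to compare against, and I will assess your argument on its own.

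Your inclusion $\rho(F)\subseteq\rho_{\mathrm{inv}}(F)$ via empirical measures and the Krylov--Bogolyubov construction is correct, and so is the fact that $\rho_{\mathrm{inv}}(F)$ is compact and convex. The gap is in the reverse inclusion. Birkhoff's theorem does give $\int\phi\,d\mu\in\rho(F)$ when $\mu$ is \emph{ergodic}, but your parenthetical ``ergodic decomposition handles the non-ergodic case'' is circular: writing a general invariant $\mu$ as $\int\nu\,dP(\nu)$ over ergodic $\nu$ only exhibits $\int\phi\,d\mu$ as a barycenter of the points $\int\phi\,d\nu\in\rho(F)$, and concluding that such a barycenter lies in $\rho(F)$ requires precisely that $\rho(F)$ be closed and convex --- which is what you are trying to prove. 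What your argument actually yields is $\rho(F)\subseteq\rho_{\mathrm{inv}}(F)$ together with $\rho_{\mathrm{inv}}(F)=\textrm{cl}[\textrm{conv}(\rho(F))]$, not the equality $\rho(F)=\rho_{\mathrm{inv}}(F)$.

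The Misiurewicz--Ziemian proof of convexity is direct and does not pass through invariant measures: given $v_0,v_1\in\rho(F)$ and $t\in[0,1]$, one manufactures sequences $(x_i,n_i)$ realizing $tv_0+(1-t)v_1$ by alternating long orbit segments whose normalized displacement is close to $v_0$ with long segments close to $v_1$, in the right proportions; compactness of $\T^2$ keeps the ``joining cost'' between consecutive segments uniformly bounded and hence negligible after dividing by the total length. Once convexity is established this way, your identity $\rho(F)=\rho_{\mathrm{inv}}(F)$ does follow (and is indeed proved in \cite{MiZi1}), but it is a consequence of the theorem rather than an independent route to it.
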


We also have the following properties for the rotation set:

\begin{itemize}

\item[(1)] $\rho_C(F)$ is a compact set.

\item[(2)] If there exist a map $h:\T^2\rightarrow\T^2$ homotopic to the identity such that $h\circ g=f \circ h$ for some $f,g\in\homeo$ we say that $g$ is semi-conjugate to $f$ by $h$. In this case, we have $\rho_C(f)=\rho_{h(C)}(g)$ for any closed and $g$-invariant set $C$.

\item[(3)] Given a compact set $A\subset\R^2$, we denote by $\textrm{conv}(A)$ to the convex hull of $A$. Then we have that $\textrm{conv}({\rho_C(F)})=\textrm{conv}(\rho_{\Omega(f)\cap C}(F))$ where $\Omega(f)$ is the non-wandering set of $f$ (see section \ref{theset}).

\item[(4)] For every $n\in\Z$, $\rho_C(F^n)=n\cdot\rho_C(F)$.

\end{itemize}

Properties 1,2, and 4 are not difficult to prove. For property 3 some ergodic-theoretical concepts related to the rotation set need to be introduced, see \cite{MiZi1} and \cite{Da}.

After Misiurewicz's and Zieman's result, the theory has been developed by studying the three following cases:

\begin{itemize}

\item[(1)] the rotation set is given by a single point;

\item[(2)] the rotation set is given by a segment;

\item[(3)] the rotation set has non-empty interior.

\end{itemize}

Interesting dynamical implications have been found for each one of the cases  (see for instance \cite{F1,F2,MiZi2,LiMa,Da,Ja}), and still are studied. A rather different problem is to determine which convex sets of the plane can be realized as the rotation set of some $f\in\homeo$.

For this problem, there are still no answers to fundamental questions. If we see the problem in each of the three cases we have:

\begin{itemize}

\item For the first case, any single point in the plane can obviously be realized as a rotation set of some homeomorphism.

\item For the second case there is a conjecture due J. Franks and M. Misiurewicz, which states that the segments which can be realized as rotation set either contain a rational point as an endpoint or they contain infinitely many rational points. This conjecture is still open.

\item For the third case it is not known whether all convex set in the plane with non empty interior can be the rotation set of some homeomorphism. In this direction J. Kwapisz has shown that every polygon in the plane with rational vertices can be realized as a rotation set (see \cite{Kw1}). Also in \cite{Kw2} Kwapisz constructs a diffeomorphism in $\homeo$ such that its rotation set contains countably many extremal points.

\end{itemize}

In this work we prove first that generically the rotation set is given by a rational polygon. Here, a rational polygon is any convex set in the plane with finitely many extremal points which are rational. Note that this could be even segments and single points, to which we refer as degenerated rational polygons. We consider the $C^0$ topology in $\homeo$, and we obtain the following result.

\begin{teo*}

There exists an open and dense set $\mathcal{D}\subset\homeo$ such that for every $f\in\mathcal{D}$ the rotation set $\rho(F)$ is a rational polygon (possibly degenerate).

\end{teo*}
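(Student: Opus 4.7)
The plan is to prove density and openness separately. The density step will rely on a Kwapisz-style local realization technique (\cite{Kw1}) to plant periodic orbits with prescribed rotation vectors together with topological blocking structures. The openness step will rely on persistence of periodic orbits (with integer-locked displacement vectors) and on the $C^0$-robustness of those blocking structures.

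For density, fix $f \in \homeo$ and $\varepsilon > 0$. Since $\rho(F)$ is compact and convex by Theorem~\ref{convexity}, one can choose a rational polygon $P$ within Hausdorff distance $\varepsilon/2$ of $\rho(F)$. I would then construct an $\varepsilon$-perturbation $g$ with $\rho(G) = P$ in two substeps. First, for each vertex $v_i = p_i/q_i$ of $P$, use a local perturbation supported in a small open region (chosen pairwise disjoint) to plant a periodic orbit of $g$ realizing $v_i$, so that $P \subseteq \rho(G)$. Second, install $C^0$-robust dynamical obstructions that force $\rho(G)$ to stay on the correct side of each supporting line of $P$, so that $\rho(G) \subseteq P$. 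Upper semicontinuity of $\rho$, immediate from its definition via limits of $(F^{n}(x) - x)/n$, ensures that taking all perturbations small enough keeps the two substeps compatible.

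For openness, I define $\mathcal{D}$ to be the set of $f \in \homeo$ whose rotation set is a rational polygon $P$ with each vertex $v_i = p_i/q_i$ realized by a periodic orbit $O_i$ of period $q_i$, and with each supporting half-plane of $P$ enforced in a $C^0$-robust way. For $g$ sufficiently $C^0$-close to $f$, each $O_i$ persists by a Brouwer-degree argument, and the integer displacement $F^{q_i}(\tilde x) - \tilde x \in \Z^2$ of a lift is locally constant in $f$, so the rotation vector of the continuation is still exactly $v_i$; hence $P \subseteq \rho(G)$. Combined with upper semicontinuity and the robustness of the supporting half-plane obstructions, one obtains $\rho(G) \subseteq P$, hence $\rho(G) = P$ and $g \in \mathcal{D}$.

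The principal obstacle will be the formulation and installation of the robust supporting half-plane obstructions. Persistence of periodic orbits alone handles the vertices of $P$, but no single invariant set pins down a whole face: one needs a topological or combinatorial mechanism --- for instance blocking curves in the universal cover in the style of Kwapisz, or a horseshoe-type forcing --- that forbids orbits from pushing across the supporting line and survives $C^0$-small perturbations. Producing such an obstruction locally without destroying the periodic orbits planted at the vertices, and proving it persists, is the technical heart of both halves of the argument.
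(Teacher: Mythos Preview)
Your proposal identifies the right difficulty but does not resolve it. The entire weight of the argument rests on the ``robust supporting half-plane obstructions,'' which you yourself flag as the principal obstacle and for which you offer only hypothetical mechanisms (Kwapisz-style blocking curves, horseshoe forcing). Neither adapts readily here: Kwapisz's construction in \cite{Kw1} builds a homeomorphism \emph{from scratch} realizing a prescribed polygon; it gives no procedure for installing blocking curves into an $\varepsilon$-perturbation of an \emph{arbitrary} $f$. Such curves are global topological objects, and there is no evident reason they can be inserted by a $C^0$-small modification while simultaneously pinning the rotation set at exactly $P$. Upper semicontinuity only tells you $\rho(G)$ stays near $\rho(F)$, not that it lies inside your chosen $P$, and planting the vertex periodic orbits can itself enlarge the rotation set in uncontrolled directions. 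Without a concrete construction of the obstructions, both the density step and the openness step collapse. (A secondary issue: $C^0$-persistence of a periodic orbit via a Brouwer-degree argument requires nonzero fixed-point index, which you have not arranged.)

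The paper bypasses this difficulty by a structurally different route. Rather than forcing a prescribed polygon, it exploits the $C^0$-density of fitted axiom~A diffeomorphisms (Shub--Sullivan, Theorem~\ref{dense}) and shows that for any such $f$ the rotation set is \emph{automatically} a rational polygon: each totally disconnected saddle basic piece admits a rotational Markov partition (Corollary~\ref{partitionscor}), and a symbolic-dynamics argument (Corollary~\ref{polygon}) then gives a rational polygon with vertices coming from periodic words of bounded length. Openness comes for free from Nitecki's $C^0$ semi-stability (Corollary~\ref{corneigh}): every nearby $g$ is semi-conjugate to $f$ by a map homotopic to the identity, so $\rho(G)=\rho(F)$. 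In effect, the ``half-plane obstructions'' you seek are already encoded, implicitly and robustly, in the hyperbolic structure of an axiom~A system; one never has to build them by hand.
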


This result (which is restated below as Theorem \ref{TEO1}) can be seen as a generalization of the corresponding result in the circle theory, which asserts that for an open and dense set in $\textrm{Homeo}_0(S^1)$ the rotation set of any element belonging to this set has a rational rotation number. Further this result generalizes S. Zanata's result \cite{Zana}, in which is proved that if for some homoemorphism $f$ the rotation set has an irrational extremal point, then the rotation set can be modified with arbitrary small perturbations of $f$.

\ The idea is to see that the rotation set of certain \textit{axiom A diffeomorphism} are rational polygons (for which we were inspired by the articles \cite{Kw1} and \cite{Zi}), and then making use of the \textit{genericity} and \textit{robustness} of these diffeomorphism to construct the set $\mathcal{D}$. After having obtained this first result, we extend it to all Axiom A diffeomorphism in $\homeo$ by using techniques from hyperbolic dynamics (see Section \ref{Axasec}).

\begin{teo*}

Let $f\in \homeo$ be an axiom A diffeomorphism. Then $\rho(F)$ is a rational polygon.

\end{teo*}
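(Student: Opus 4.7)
The strategy is to reduce to a single basic set via the spectral decomposition, code that set by a subshift of finite type (SFT), pull the displacement back to a locally constant integer-valued cocycle, and then read off polyhedrality from the combinatorics of edge flows on the underlying graph. Concretely, Smale's spectral decomposition gives $\Omega(f)=\Lambda_1\sqcup\cdots\sqcup\Lambda_k$ into finitely many basic hyperbolic sets, and property~(3) in the introduction yields
\[
\rho(F)=\mathrm{conv}\bigl(\rho_{\Omega(f)}(F)\bigr)=\mathrm{conv}\Bigl(\bigcup_{i=1}^{k}\rho_{\Lambda_i}(F)\Bigr).
\]
Since the convex hull of a finite union of rational polygons is again a rational polygon, it suffices to prove the result for a single basic set $\Lambda$.

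Fix such a $\Lambda$ and a Markov partition $\{R_1,\ldots,R_N\}$ of $\Lambda$, refined so that each $R_i$ has very small diameter, and let $\pi\colon(\Sigma_A,\sigma)\to(\Lambda,f)$ be the corresponding boundedly finite-to-one semiconjugacy from an SFT. Choose a lift $\tilde R_i\subset\R^2$ of each rectangle. Using the local product structure (hence connectedness) of Markov rectangles together with the smallness of the partition, one obtains for each admissible transition $(i,j)$ a unique vector $v_{ij}\in\Z^2$ such that $F(\tilde x)\in\tilde R_j+v_{ij}$ for every lift $\tilde x\in\tilde R_i$ of a point in $R_i\cap f^{-1}(R_j)$. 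Define the locally constant cocycle $\psi\colon\Sigma_A\to\Z^2$ by $\psi(\omega):=v_{\omega_0\omega_1}$. Letting $g(\omega)$ be the unique lift of $\pi(\omega)$ lying in $\tilde R_{\omega_0}$ (a bounded function, since there are finitely many $\tilde R_i$), a direct check gives the coboundary identity $\phi\circ\pi=\psi+g\circ\sigma-g$, where $\phi(x)=F(\tilde x)-\tilde x$ is the displacement. Combined with the Misiurewicz--Ziemian formula $\rho_\Lambda(F)=\{\int\phi\,d\nu:\nu\ f\text{-invariant on }\Lambda\}$ and pushing $\sigma$-invariant measures through $\pi$, integration of the identity yields
\[
\rho_\Lambda(F)=\Bigl\{\int_{\Sigma_A}\psi\,d\mu:\mu\text{ is }\sigma\text{-invariant}\Bigr\}.
\]

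Finally, $\sigma$-invariant probability measures on $\Sigma_A$ are determined, at the level of $2$-block frequencies, by a point in the finite-dimensional polytope of balanced probability flows on the transition graph of $A$, and $\mu\mapsto\int\psi\,d\mu$ is linear in these coordinates. The extreme points of the flow polytope are the normalized cyclic flows supported on the finitely many \emph{simple} cycles of the graph, and their images under $\psi$ are the rational averages $\tfrac{1}{\ell}(v_{i_0 i_1}+\cdots+v_{i_{\ell-1}i_0})\in\Q^2$. Hence $\rho_\Lambda(F)$ is the convex hull of a finite rational set, i.e.\ a rational polygon. The main technical obstacle is the construction of the integer cocycle: one must argue that the Markov partition can be refined enough so that each allowed transition determines a \emph{single} integer translation between the chosen lifts, using connectedness of Markov rectangles and uniform continuity of $F$ on a fundamental domain. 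The remaining steps are then essentially combinatorial/formal.
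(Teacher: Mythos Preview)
Your argument is correct and, for totally disconnected saddle pieces, essentially coincides with what the paper does in Section~2 (Corollary~\ref{polygon} is the combinatorial counterpart of your ``extreme flows are simple cycles'' step). The genuine divergence is in how the two proofs handle basic pieces that are \emph{not} totally disconnected, i.e.\ non-trivial attractors or repellers. The paper's notion of partition requires pairwise disjoint topological rectangles covering $\Lambda$; this is impossible when $\Lambda$ is connected, so the paper cannot run the symbolic argument directly there. Instead it proves a $C^0$-stability result (Lemma~\ref{topstab} and Theorem~\ref{thm1}): for any nearby fitted axiom~A $\tilde f$ one builds, via shadowing, an onto semiconjugacy $h:\tilde\Lambda\to\Lambda$ close to the identity, lifts it, and concludes $\rho_\Lambda(F)=\rho_{\tilde\Lambda}(\tilde F)$; the latter is then a rational polygon by the totally disconnected case. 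Your route bypasses this entirely by using Bowen's Markov partitions (rectangles allowed to share boundaries), working on the SFT rather than on $\Lambda$, and invoking the measure-theoretic description of the rotation set together with the lifting of invariant measures through the factor map $\pi$. This is cleaner and closer in spirit to Ziemian~\cite{Zi}; the paper's detour through Theorem~\ref{thm1}, on the other hand, yields an independent $C^0$-robustness statement for the rotation set restricted to a hyperbolic attractor.

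Two small points. First, your justification ``local product structure (hence connectedness) of Markov rectangles'' is not quite right for an attractor: the Bowen rectangle $R_i\subset\Lambda$ is a product of an unstable arc with a stable Cantor set and is therefore disconnected. What you actually need---and what you already allude to---is only that $\mathrm{diam}(R_i)$ and $\mathrm{diam}(f(R_i))$ are small enough that $F(\tilde R_i)$ meets at most one integer translate of each $\tilde R_j$; this follows from uniform continuity of $F$ once the mesh is refined, with no connectedness needed. Second, the Misiurewicz--Ziemian identity you quote gives $\mathrm{conv}(\rho_\Lambda(F))=\{\int\phi\,d\nu\}$ rather than $\rho_\Lambda(F)$ itself (the restricted rotation set need not be convex), but since you only need the convex hull to be a rational polygon this is harmless.
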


Finally, we consider rotation sets on minimal sets of elements in $\homeo$. As the definition of minimal set can be seen as a natural generalization of periodic orbits, one could guess that the rotation set on a minimal set is given by a unique point, as in the case of periodic orbits. We see here that this is not true. In fact, we observe in general that the rotation sets on minimal sets are continua, and further we prove the following result (see Corollary \ref{abun1}):

\begin{teo*}

There exist an open set $\mathcal{D}'\subset \homeo$, such that any $f\in\mathcal{D}'$ has a minimal set $\mathcal{M}$ with $\rho_{\mathcal{M}}(F)$ given by a non-trivial segment. Furthermore, $\mathcal{D}'$ is dense in the set of homeomorphisms whose rotation set has non-empty interior.

\end{teo*}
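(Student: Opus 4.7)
The plan is to produce, for any $f$ whose rotation set has non-empty interior, a nearby $g \in \mathcal{D}'$ by combining an Axiom A approximation with a symbolic construction of a minimal set inside a topologically mixing horseshoe. The techniques used to prove the Main Theorem yield density in $\homeo$ of Axiom A diffeomorphisms whose rotation set has non-empty interior; pick such $g_0$ close to $f$. By Theorem B, $\rho(G_0)$ is a rational polygon, and by property (3) from the introduction together with Smale's spectral decomposition, it equals the convex hull of the rotation sets of the finitely many basic sets $\Lambda_1,\dots,\Lambda_k$ of $g_0$. Having non-empty interior thus forces at least two periodic orbits (in one or several $\Lambda_i$) with distinct rotation vectors $v_1 \neq v_2$. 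A small $C^0$-perturbation using standard hyperbolic techniques---creating a transverse heteroclinic intersection between the continuations of these orbits and invoking the Birkhoff--Smale horseshoe theorem---produces $g_1$ close to $g_0$ possessing a single topologically mixing hyperbolic basic set $\Lambda$ containing periodic orbits $O_1,O_2$ realizing $v_1,v_2$. The set $\Lambda$ is conjugate to a subshift of finite type with the specification property and is structurally stable.

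Inside $\Lambda$ I construct the minimal set as follows. Fix a minimal but not uniquely ergodic subshift $Y \subset \{1,2\}^{\Z}$ with two ergodic measures $\nu_1,\nu_2$ having distinct frequencies $\alpha_1 \neq \alpha_2$ of the symbol $1$---for instance a Hahn--Katznelson construction or an appropriate substitution system. Replacing $1 \mapsto w_1$ and $2 \mapsto w_2$, where $w_1, w_2$ are codes built from iterates of $O_1, O_2$ together with short transition words allowed by specification, I embed $Y$ into the SFT coding $\Lambda$, producing a minimal set $X \subset \Lambda$. The lifted ergodic measures $\mu_1, \mu_2$ on $X$ have rotation vectors arbitrarily close to $\alpha_i v_1 + (1-\alpha_i) v_2$ once the blocks $w_1, w_2$ are taken long relative to the transition words, so $\rho_X(G_1)$ is a non-trivial segment. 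Openness of $\mathcal{D}'$ then follows from structural stability of $\Lambda$: for $g$ in a $C^0$-neighborhood of $g_1$, the basic set $\Lambda$ persists as a hyperbolic set $\Lambda_g$ topologically conjugate to $\Lambda$ via $h_g \to \mathrm{id}$. The minimal set $X_g := h_g(X)$ inherits ergodic measures $(h_g)_*\mu_i$ whose rotation vectors---integrals of the perturbed displacement function---depend continuously on $g$ and hence remain distinct, so $\rho_{X_g}(G)$ is still a non-trivial segment.

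The delicate step is the symbolic construction in the second paragraph. I need to ensure both that $X$ is genuinely minimal in $\Lambda$ (not merely a factor of the minimal system $Y$), and that the integrated displacements of $\mu_1, \mu_2$ do not accidentally coincide through the contribution of transitions. Minimality follows from minimality of $Y$ combined with injectivity of the substitution $1\mapsto w_1, 2\mapsto w_2$ with fixed transition words; the transition contribution to each rotation vector is bounded by $C/|w_i|$, so taking $|w_i|$ sufficiently large makes the rotation vectors of $\mu_1$ and $\mu_2$ as close as desired to the distinct convex combinations $\alpha_i v_1 + (1-\alpha_i) v_2$, guaranteeing that $\rho_X(G_1)$ is indeed a non-trivial segment.
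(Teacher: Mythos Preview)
Your symbolic construction of the minimal set is a legitimate alternative to the paper's explicit almost-periodic sequence (Lemma~\ref{tobias}): both produce a minimal subshift whose Birkhoff averages of the displacement cocycle fail to converge, and either can be embedded into a full $2$-shift inside the basic piece. The paper's route is slightly more elementary in that it avoids invoking a minimal-not-uniquely-ergodic construction, but yours is fine.

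However, two steps in your argument are problematic. First, the perturbation to create a transverse heteroclinic intersection between continuations of $O_1$ and $O_2$ is unnecessary and awkward in the $C^0$ category (transversality is a $C^1$ notion). The paper bypasses this entirely: since $\textrm{int}(\rho(G_0))\neq\emptyset$, Franks' theorem produces infinitely many periodic points with pairwise distinct rational rotation vectors, and by pigeonhole some single basic piece already contains two of them (Corollary~\ref{abun0}). No further perturbation is needed.

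Second, and more seriously, your openness argument invokes ``structural stability of $\Lambda$'' to obtain a topological \emph{conjugacy} $h_g$ between $\Lambda$ and a continuation $\Lambda_g$ for $g$ in a $C^0$-neighborhood. This is false: hyperbolic sets are structurally stable under $C^1$ perturbations, not $C^0$ ones; a $C^0$-small perturbation can destroy hyperbolicity and there is no reason for a conjugacy to exist. What survives in the $C^0$ category is only a \emph{semi}-conjugacy, obtained via shadowing (orbits of $g$ near $\Lambda$ are pseudo-orbits of $g_1$), and it goes in the direction $h_g:\tilde\Lambda_g\to\Lambda$, not the other way. The paper handles this by appealing to Nitecki's $C^0$ semi-stability theorem for Axiom~A diffeomorphisms satisfying the transversality condition: every $g$ in a $C^0$-neighborhood is semi-conjugate to the model $f$ via $h$ homotopic to the identity, with $h\circ g=f\circ h$. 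Then one pulls back: take any minimal $\mathcal{M}\subset h^{-1}(\mathcal{M}')$ for $g$; minimality of $\mathcal{M}'$ forces $h(\mathcal{M})=\mathcal{M}'$, and property~(2) of rotation sets gives $\rho_{\mathcal{M}}(G)=\rho_{\mathcal{M}'}(F)$. Your argument needs to be rewritten along these lines; as stated, the claim that $X_g:=h_g(X)$ exists and is minimal for $g$ does not stand.
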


\textbf{Acknowledgments:} I would like to thank Tobias J\"ager, Martin Sambarino and Rafael Potrie for very useful comments and discussions, which have inspired important ideas for this article.

\end{section}

\begin{section}{Symbolic representation and rotation set.}\label{symbolicrep}

Let us consider a closed and invariant set $C\subset \T^2$ of $f\in\homeo$, and a family $\mathcal{S}=\{S_0,...,S_M\}$ of pairwise disjoint sets contained in $\T^2$ such that:

\begin{itemize}

\item $S_i$ is homeomorphic to the unit square  for every $i=0,...,M$;

\item $C\subset \bigcup_{i=0}^M S_i$.

\end{itemize}

We say that $\mathcal{S}$ is a finite partition of $C$ by rectangles. For the sake of simplicity we refer to $\mathcal{S}$ as a partition. As usual, for every partition $\mathcal{S}$ we can associate the space of sequences of symbols $\Omega_{\mathcal{S}}\subset\{0,...,N\}^{\Z}$ defined as

\begin{equation*}
\Omega_{\mathcal{S}}=\left\{ \xi\in \{0,...,N\}^{\Z}:\exists\ x\in C \mbox{ such that } f^n(x)\in S_{\xi(n)}\forall n\in\Z\right\}.
\end{equation*}

The space $\Omega_{\mathcal{S}}$ is closed and invariant under the shift $\sigma:\{0,...,N\}^{\Z}\rightarrow\{0,...,N\}^{\Z}$, defined by $\sigma(\xi)(n):=\xi(n+1)$ for every $n\in\Z$. Furthermore, there is a natural onto map $h:C\rightarrow\Omega_{\mathcal{S}}$ given by the relation $f^n(x)\in S_{h(x)(n)}$ for every $n\in\Z$, which semi-conjugates $f\mid_C$ with $\sigma\mid_{\Omega_{\mathcal{S}}}$ (that is $h\circ f=\sigma\circ h$ holds in $C$).

\begin{subsection}{Symbolic representation and displacement.}\label{goodpar}

We say that an open, connected and bounded set $D\subset\R^2$ is a trivial domain if $\pi_{|D}:D\rightarrow \pi(D)$ is a homeomorphism. If $C$ and $\mathcal{S}=\{S_1,...,S_M\}$ are as above, we say that $\mathcal{S}$ is a rotational partition if there exists a trivial domain $D$ such that:

\begin{itemize}

\item[(1)] $\pi(D)\supset \bigcup_{i=0}^M S_i$;

\item[(2)] if we consider $\tilde{S}_i:=\pi^{-1}(S_i)\cap D$ for $i=0,...,M$ and a lift $F$ of $f$, then for each $i=1,...,M$ there is a unique vector $s_i\in\Z^2$ such that $F(\tilde{S_i})\subset D+s_i$.

\end{itemize}

Notice that a roational partition has the following properties: (1) $\tilde{S}_i:=\pi^{-1}(S_i)$ is a rectangle, and (2) the image of any rectangle $\tilde{S}_i$ by the lift $F$ meets at most one integer translation of any rectangle $\tilde{S}_j$.

Given a surface $S$, $A\subset S$ and $x\in A$, we denote by $[A]_x$ the connected component of $x$ in $A$. Let us consider a rotational partition $\mathcal{S}=\{S_0,...,S_N\}$ of a closed and $f$-invariant set $C$, and a fixed $n\in\N$. Then for every $i\in\{0,...,N\}$ we define the following families of subsets of $\T^2$: 

$$\mathcal{I}_i=\left\{\pi([K]_x):K=F^n(\tilde{S}_i)\cap \pi^{-1}\left(\bigcup_{j=0}^N S_j\right),x\in\pi^{-1}(C)\cap K\right\}\mbox{ and } f^n_*(\mathcal{S})=\bigcup_{i=0}^N\mathcal{I}_i.$$

\ For a partition $\mathcal{S}$ we define the positive number $d_{\mathcal{S}}:=\max\{\textrm{diam}(S_i):S_i\in\mathcal{S}\}$. The following are simple observations. We omit the proofs.

\begin{obs}\label{smallsize}

For every closed set $C$ which admits a partition $\mathcal{R}$, there exists $\varepsilon_0>0$ such that every partition $\mathcal{S}$ of $C$ with $d_{\mathcal{S}}<\varepsilon_0$ is a rotational partition.

\end{obs}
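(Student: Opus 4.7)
My plan is to construct a single trivial domain $D_0\subset\R^2$ whose projection is an open neighborhood of $C$ in $\T^2$, and then to argue by uniform continuity of $F$ that every sufficiently fine partition of $C$ lifts into $D_0$ with each rectangle being sent by $F$ into exactly one integer translate of $D_0$.

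First I would build $D_0$ from the given partition $\mathcal{R}=\{R_0,\ldots,R_M\}$. Since each $R_i$ is a topological disk and hence contractible, an ambient isotopy of $\T^2$ can push a standard meridian and longitude off every $R_i$ in turn, producing two essential simple closed curves $\gamma_1,\gamma_2\subset\T^2\setminus\bigcup_i R_i$ intersecting transversally at a single point. Cutting $\T^2$ along $\gamma_1\cup\gamma_2$ yields an open topological disk containing $\bigcup_i R_i$; any connected component of its preimage under $\pi$ is a trivial domain $D_0$ with $\pi(D_0)\supset\bigcup_i R_i\supset C$.

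By compactness of $C$ and openness of $\pi(D_0)$ I can then pick $\eta>0$ with $B_\eta(C)\subset\pi(D_0)$, and by uniform continuity of $f$ I can pick $\delta>0$ with $f(B_\delta(C))\subset B_{\eta/2}(C)$, using $f$-invariance of $C$. Setting $\varepsilon_0:=\min\{\delta,\eta/2\}$, any partition $\mathcal{S}=\{S_0,\ldots,S_N\}$ of $C$ with $d_{\mathcal{S}}<\varepsilon_0$, in which every $S_i$ meets $C$ (as may be assumed without loss of generality), satisfies $\bigcup_i S_i\subset\pi(D_0)$ and $\bigcup_i f(S_i)\subset\pi(D_0)$.

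Then $\tilde S_i:=\pi^{-1}(S_i)\cap D_0$ is a single connected lift of $S_i$, and $F(\tilde S_i)$ is a connected subset of $\pi^{-1}(f(S_i))$. Because $D_0$ is a trivial domain, $\pi^{-1}(\pi(D_0))$ decomposes as the disjoint union of integer translates $\{D_0+s\}_{s\in\Z^2}$, so connectedness forces $F(\tilde S_i)$ into a unique translate $D_0+s_i$, yielding the displacement vector required by the definition of a rotational partition. I expect the main subtlety to lie in the topological step constructing $D_0$; once that is in hand, the rest is a routine compactness, uniform continuity and covering-space argument.
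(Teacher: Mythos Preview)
The paper does not actually prove this remark: it is explicitly listed among the ``simple observations'' whose proofs are omitted. So there is nothing to compare your argument against.

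Your argument is correct and is the natural way to justify the statement. The only substantive step, as you rightly flag, is the construction of a single trivial domain $D_0$ with $\pi(D_0)\supset C$; once that is in hand, condition~(1) of the definition follows from compactness of $C$ and your choice of $\eta$, and condition~(2) follows from connectedness of $F(\tilde S_i)$ together with the fact that the integer translates of a trivial domain are pairwise disjoint. Your isotopy argument for producing $\gamma_1,\gamma_2$ in the complement of $\bigcup_i R_i$ works: choose pairwise disjoint open disk neighbourhoods $U_i\supset R_i$, and within each $U_i$ push the arcs of the meridian--longitude pair, rel their endpoints on $\partial U_i$, into the annulus $U_i\setminus R_i$. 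Since these isotopies are supported in the disjoint $U_i$, they do not interfere with one another, and the single transverse intersection point of $\gamma_1$ and $\gamma_2$ survives. An equivalent way to phrase the same step is that the inclusion $\T^2\setminus\bigcup_i R_i\hookrightarrow\T^2$ is $\pi_1$-surjective (attaching $2$-cells only kills relations), so representatives of the standard generators of $\pi_1(\T^2)$ can be found in the complement.
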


\begin{obs}\label{pushisrot}

If $f^n_*(\mathcal{S})$ is a partition of $C$, then it is a rotational partition for $f^n$.

\end{obs}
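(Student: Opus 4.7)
The plan is to verify that $D':=F^n(D)$ serves as a trivial domain witnessing that $f^n_*(\mathcal{S})$ is a rotational partition for $f^n$ with lift $F^n$, where $D$ is the trivial domain of $\mathcal{S}$.

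First, I would check that $D'$ is indeed a trivial domain. It is open, connected and bounded (as the image of $D$ under the plane homeomorphism $F^n$), and $\pi|_{D'}$ is injective because if $F^n(z_1), F^n(z_2)\in D'$ have equal projections, then applying $f^{-n}$ on the torus gives $\pi(z_1)=\pi(z_2)$, whence $z_1=z_2$ by injectivity of $\pi|_D$, and so $F^n(z_1)=F^n(z_2)$. A direct consequence is that distinct integer translates $D'+s$ are pairwise disjoint. Condition (1) of a rotational partition is then immediate: any piece $T=\pi([K]_x)\in f^n_*(\mathcal{S})$ satisfies $[K]_x\subset F^n(\tilde S_i)\subset F^n(D)=D'$, hence $T\subset\pi(D')$.

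For condition (2), the decomposition $\pi^{-1}(\bigcup_j S_j)=\bigsqcup_{(j,v)\in\{0,\ldots,N\}\times\Z^2}(\tilde S_j+v)$ into pairwise disjoint components (disjointness follows from injectivity of $\pi|_D$) forces the connected set $[K]_x$ to lie in a unique translate $\tilde S_{j(x)}+v(x)$. The lift of $T$ in $D'$ then equals $[K]_x$ itself, since $[K]_x\subset D'$ and no other integer translate of $[K]_x$ can meet $D'$ by injectivity of $\pi|_{D'}$. Applying $F^n$ and using $\tilde S_{j(x)}\subset D$ one finds
\[
F^n(\tilde T)=F^n([K]_x)\subset F^n(\tilde S_{j(x)})+v(x)\subset F^n(D)+v(x)=D'+v(x),
\]
so $s_T:=v(x)$ does the job; uniqueness is automatic from the disjointness of distinct translates of $D'$ together with the non-emptiness of $F^n(\tilde T)$.

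The main conceptual point I expect to be subtle is the choice of trivial domain: the naive guess $D'=D$ does not work for $n\geq 2$, because $F^n$ can spread $\tilde S_{j(x)}$ across several integer translates of $D$ and there is no uniform $s_T$ containing all of $F^n(\tilde T)$ in a single translate. Passing to $D'=F^n(D)$ absorbs exactly this $n$-step spreading and reduces the verification of the rotational condition to the single identification $[K]_x\subset\tilde S_{j(x)}+v(x)$ already built into the construction of $f^n_*(\mathcal{S})$; the hypothesis that $f^n_*(\mathcal{S})$ is a partition enters only to ensure that what we are describing are honest partition rectangles.
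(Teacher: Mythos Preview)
Your argument is correct. The paper does not supply a proof of this remark (it states ``We omit the proofs'' for both Remarks~\ref{smallsize} and~\ref{pushisrot}), so there is nothing to compare against; your choice of $D'=F^n(D)$ as the witnessing trivial domain, and the identification $\tilde T=[K]_x$ via injectivity of $\pi|_{D'}$, give a clean verification of both conditions in the definition of a rotational partition for $f^n$.
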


Given a partition $\mathcal{S}$ we consider $\Omega_{\mathcal{S}}$ and the semi-conjugacy $h$.

\begin{prop}\label{disp}

Let $\mathcal{S}$ be a rotational partition of $C$. Then, for every $x\in\pi^{-1}(C)$ and $n\in\N$ we have $F^n(x)\in B(x+\sum_{j=0}^{n-1}s_{i_j},2\cdot d_{\mathcal{S}})$, where $i_j=h(\pi(x))(j)$ for $j=0,...,n-1$.

\end{prop}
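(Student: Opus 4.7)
The strategy is to follow the orbit of $x$ by canonical lifts inside the trivial domain $D$ and use the rotational condition to telescope the displacement. For each $j = 0, \dots, n$, let $y_j \in D$ be the unique lift of $f^j(\pi(x))$ lying in $D$: such a $y_j$ exists and is unique because $f^j(\pi(x)) \in S_{i_j} \subset \pi(D)$ and $\pi|_D$ is a homeomorphism, and by construction $y_j \in \tilde S_{i_j}$.

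The key identity is $F(y_j) = y_{j+1} + s_{i_j}$ for each $j < n$. Indeed, from $y_j \in \tilde S_{i_j}$ and the defining property $F(\tilde S_{i_j}) \subset D + s_{i_j}$, the point $F(y_j) - s_{i_j}$ lies in $D$; it projects to $f(\pi(y_j)) = f^{j+1}(\pi(x)) = \pi(y_{j+1})$, and by injectivity of $\pi|_D$ it must equal $y_{j+1}$. Telescoping this identity yields $F^n(y_0) = y_n + \sum_{j=0}^{n-1} s_{i_j}$. Since $\pi(y_0) = \pi(x)$, the vector $v_0 := x - y_0$ lies in $\Z^2$, and by the $\Z^2$-equivariance of the lift $F^n(x) = F^n(y_0) + v_0$. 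Rearranging,
\[
F^n(x) - \Bigl( x + \sum_{j=0}^{n-1} s_{i_j} \Bigr) \;=\; y_n - y_0.
\]

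It remains to bound $|y_n - y_0| \leq 2 d_\mathcal{S}$. Both $y_0 \in \tilde S_{i_0}$ and $y_n \in \tilde S_{i_n}$ lie in partition rectangles of diameter at most $d_\mathcal{S}$ inside the same trivial domain $D$ (using that, by Remark \ref{smallsize}, one may restrict to sufficiently fine partitions, so that $\pi|_D$ is a local isometry on each connected rectangle $\tilde S_i$ and the diameter on the torus is preserved upon lifting). The main technical point of the proof is therefore this last geometric estimate: controlling how the two small rectangles $\tilde S_{i_0}$ and $\tilde S_{i_n}$ are positioned within $D$, so that the cross distance between them does not exceed their diameters. The factor $2$ in the constant $2 d_\mathcal{S}$ reflects this pairing, combining one $d_\mathcal{S}$ from the diameter of $\tilde S_{i_0}$ with another from $\tilde S_{i_n}$, with the cross term absorbed by the chosen trivial domain.
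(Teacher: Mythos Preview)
Your telescoping via the canonical lifts $y_j\in D$ is exactly the paper's argument in different notation: the paper tracks integer vectors $v_j$ with $F^j(x)\in\tilde S_{i_j}+v_j$ and derives the recursion $v_j=v_{j-1}+s_{i_{j-1}}$, which is your identity $F(y_j)=y_{j+1}+s_{i_j}$. Both routes land on
\[
F^n(x)-\Bigl(x+\sum_{j=0}^{n-1}s_{i_j}\Bigr)=y_n-y_0,\qquad y_0\in\tilde S_{i_0},\ y_n\in\tilde S_{i_n}.
\]

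The gap is precisely where you flag it, and your closing paragraph does not fill it. You say the ``cross term'' is ``absorbed by the chosen trivial domain'', but nothing in the definition of a rotational partition forces the lifted rectangles $\tilde S_i\subset D$ to be near one another: two boxes each of diameter $\le d_{\mathcal S}$ can sit at opposite ends of $D$, so $|y_n-y_0|$ is only bounded by $\mathrm{diam}(D)$, not by $2d_{\mathcal S}$. Invoking Remark~\ref{smallsize} does not help either --- that remark lets you shrink $d_{\mathcal S}$, but it says nothing about the relative placement of distinct rectangles inside $D$. The paper's own proof is equally terse at this step and simply asserts the bound $R=2d_{\mathcal S}$ without justification. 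For the only application (Corollary~\ref{rs}) any uniform constant independent of $n$ suffices, since one divides by $l_{\alpha_i}\to\infty$; so the imprecision is harmless downstream. But as written, your explanation of the factor $2$ is not an argument: either replace $2d_{\mathcal S}$ by $\mathrm{diam}(D)$, or state explicitly that the precise constant is immaterial.
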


\begin{proof}

Let $R=2\cdot d_{\mathcal{S}}$. For a fixed $n>0$ and $x\in\pi^{-1}(C)$ we have that $f^j(\pi(x))\in S_{i_j}$ where $i_j=h(\pi(x))(j)$ for every $j=0,...,n$. Therefore for every $j=0,...,M$ $F^j(x)\in \tilde{S}_{i_j}+v_j$, for some $v_j\in\Z^2$ ($\tilde{S}_{i}$ as in the definition of the rotational partition). Moreover, for every $j=1,...,M$ we have $v_j=v_{j-1}+s_{i_{j-1}}$ and $v_0=u$ with $u\in\Z^2$ such that $x\in\tilde{S}_{i_0}+u$. This implies by replacement that $F^{n}(x)\in \tilde{S}_{i_n}+s_{i_{n-1}}+...+s_{i_0}+u$, hence $F^{n}(x)\in B(x+\sum_{j=0}^{n-1}s_{i_j},R)$ where $i_j=h(\pi(x))(j)$ for $j=0...n-1$.

\end{proof}

Now we introduce some previous notation before stating an important corollary about the rotation set of $F$ on $C$. We define the set of finite words $\Omega^f_{\mathcal{S}}\subset \Omega_{\mathcal{S}}$ as

\begin{equation*}
\Omega^f_{\mathcal{S}}=\left\{\alpha:\{0,1,...,l_{\alpha}-1\}\rightarrow \{0,...,M\}:l_{\alpha}\in\N\mbox{ and for some }\xi\in\Omega_{\mathcal{S}}\mbox{ }\alpha(i)=\xi(i)\forall i=0,....,l_{\alpha}-1\right\}
\end{equation*}

For $\alpha\in\Omega^f_{\mathcal{S}}$ we call $l_{\alpha}$ the length of $\alpha$. Further, we define a function $\psi:\Omega^f_{\mathcal{S}}\rightarrow \R^2$ by $\psi(\alpha)=\sum_{j=0}^{l_{\alpha}}s_{\alpha(j)}$.

\begin{cor}\label{rs} Let $\mathcal{S}$ be a rotational partition of $C$, then

$\rho_{C}(F)=\left\{v\in\R^2|\ \exists \ (\alpha_i)_{i\in\N}\subset \Omega^f_{\mathcal{S}}\mbox{ with }l{\alpha_i}\nearrow +\infty\mbox{ and }v=\lim_{i}\frac{\psi(\alpha_i)}{l_{\alpha_i}}\right\}$

\end{cor}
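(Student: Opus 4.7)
The plan is to show both inclusions as almost immediate consequences of Proposition~\ref{disp}. The key quantitative input is that along any orbit in $C$, the displacement $F^n(x)-x$ is within a fixed (partition-dependent) constant of $\psi(\alpha)$, where $\alpha$ is the finite word of length $n$ associated to $x$ by the semi-conjugacy $h$. Dividing by $n$ makes this bounded error vanish, so the two notions of rotation vector — geometric $(F^{n}(x)-x)/n$ and symbolic $\psi(\alpha)/l_\alpha$ — have exactly the same accumulation points.

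For the inclusion $(\supseteq)$, given a sequence $\alpha_i\in\Omega^f_{\mathcal{S}}$ with $l_{\alpha_i}\nearrow+\infty$ and $\psi(\alpha_i)/l_{\alpha_i}\to v$, I would use the definition of $\Omega^f_{\mathcal{S}}$ to pick $\xi_i\in\Omega_{\mathcal{S}}$ extending $\alpha_i$, and then a point $y_i\in C$ realizing $\xi_i$ under the semi-conjugacy $h$. Choosing any lift $x_i\in\pi^{-1}(y_i)$ and setting $n_i:=l_{\alpha_i}$, Proposition~\ref{disp} gives
\begin{equation*}
\left| F^{n_i}(x_i)-x_i-\psi(\alpha_i)\right|\le 2\,d_{\mathcal{S}},
\end{equation*}
so $(F^{n_i}(x_i)-x_i)/n_i$ and $\psi(\alpha_i)/l_{\alpha_i}$ differ by $O(1/n_i)$; hence both tend to $v$ and $v\in\rho_C(F)$.

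For the inclusion $(\subseteq)$, given $v\in\rho_C(F)$ realized by sequences $x_i\in\pi^{-1}(C)$ and $n_i\nearrow+\infty$, I would define $\alpha_i\in\Omega^f_{\mathcal{S}}$ of length $n_i$ by $\alpha_i(j):=h(\pi(x_i))(j)$ for $j=0,\dots,n_i-1$. Applying Proposition~\ref{disp} to the same data yields the identical bound displayed above, so $\psi(\alpha_i)/l_{\alpha_i}\to v$ as well. This exhibits $v$ as an element of the right-hand side.

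There is no real obstacle — this is a clean bookkeeping consequence of Proposition~\ref{disp}. The only point requiring a little care is the index convention in the definition of $\psi$: the extra (or missing) boundary term $s_{\alpha(l_\alpha)}$ is bounded independently of $l_\alpha$, so it disappears in the limit $\psi(\alpha_i)/l_{\alpha_i}$ and does not affect either direction of the argument.
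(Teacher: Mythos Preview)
Your proposal is correct and matches the paper's intended approach: the corollary is stated in the paper without proof, immediately after Proposition~\ref{disp}, precisely because both inclusions follow from that proposition by the bounded-error argument you give. Your handling of the off-by-one ambiguity in the definition of $\psi$ is appropriate and does not affect the limit.
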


\end{subsection}

\begin{subsection}{Finite Markov partitions and the rotation set.}\label{Simbolicstuff}

We call a function
\\$\tau:\{0,...,M\}\rightarrow \mathcal{P}(\{0,...,M\})$ with $\emptyset\notin Im(\tau)$ transition law. Given a transition law we define $\Omega_{\tau}\subset \{0,...,M\}^{\Z}$ as the set of sequences $\xi$ such that $\xi(n+1)\in \tau(\xi(n))$ for every $n\in\Z$. Let us consider a partition $\mathcal{S}=\{S_0,...,S_M\}$. We say that $\mathcal{S}$ is a (finite) Markov partition if $\Omega_{\mathcal{S}}=\Omega_{\tau}$ for some transition law $\tau$. In this subsection we want to prove that the convex hull of $\rho_{C}(F)$ is a rational polygon whenever $C$ admits a rotational Markov partition.

\ Now if we consider the space of finite words $\Omega^f_{\mathcal{S}}$ for a Markov partition and two elements $\alpha_1,\alpha_2\in\Omega^f_{\mathcal{S}}$ with lengths $l_1,l_2$ respectively such that $\tau(\alpha_1(l_1))=\alpha_2(0)$, the function $\alpha_1\alpha_2:\{0....l_1+l_2\}\rightarrow\{0,...,M\}$ given by $\alpha_1\alpha_2(i)=\alpha_1(i)$ if $i=0...l_1-1$, and $\alpha_1\alpha_2(i)=\alpha_2(i-(l_1))$ if $i=l_1,...,l_1+l_2-1$ is an element of $\Omega^f_{\mathcal{S}}$. We call $\alpha_1\alpha_2(i)$ the concatenation of $\alpha_1$ and $\alpha_2$.

In particular, if $\nu\in\Omega^f_{\mathcal{S}}$ verifies that $\nu(0)\in \tau(\nu(l_{\nu}-1))$ then one can consider for each $n\in\N$ the word $\nu^n\in\Omega^f_{\mathcal{S}}$ given by $\nu^n=\underbrace{\nu...\nu}_{n-times}$. We call this kind of elements periodic words and denote by $\Omega_{\mathcal{S}}^{Per}\subset \Omega^f_{\mathcal{S}}$ the set of periodic words. Then is easy to see that for any $\nu\in\Omega_{\mathcal{S}}^{Per}$ the vector $\frac{\psi(\nu)}{l_{\nu}}$ belongs to $\rho_C(F)$. We also add an extra word $\theta$ to $\Omega^f_{\mathcal{S}}$ that we call the trivial word which verifies $\theta\alpha=\alpha\theta=\alpha$ for every $\alpha\in \Omega^f_{\mathcal{S}}$.

\ In what follows we denote by $\textrm{cl}[A]$ the closure of $A\subset\R^2$, by $\textrm{conv}(A)$ the convex hull of $A$ and let $\rho^{Per}_C(F)=\left\{v\in\R^2|\ \exists \ (\alpha_i)_{i\in\N}\subset \Omega^{Per}_{\mathcal{S}}\mbox{ with }v=\lim_{i}\frac{\psi(\alpha_i)}{l_{\alpha_i}}\right\}$.

\begin{lema}\label{prev}

Suppose that $\mathcal{S}$ is a rotational Markov partition of $C$. Then $\rho_C(F)\subset \textrm{cl}[\textrm{conv}(\rho^{Per}_C(F))]$.

\end{lema}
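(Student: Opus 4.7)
The plan is to invoke Corollary \ref{rs}, which writes every $v\in\rho_C(F)$ as a limit $v=\lim_i \psi(\alpha_i)/l_{\alpha_i}$ for a sequence of finite words $\alpha_i\in\Omega^f_{\mathcal{S}}$ with $l_{\alpha_i}\nearrow+\infty$, and then to decompose each $\alpha_i$ into periodic subwords plus a bounded-length residual. This decomposition will express $\psi(\alpha_i)/l_{\alpha_i}$ as an approximate convex combination of points of $\rho^{Per}_C(F)$, forcing $v\in\textrm{cl}[\textrm{conv}(\rho^{Per}_C(F))]$.

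The key combinatorial step is a loop-extraction lemma. If $\alpha\in\Omega^f_{\mathcal{S}}$ has length greater than $M+1$, by the pigeonhole principle on the $(M+1)$-letter alphabet there exist indices $p<q$ with $\alpha(p)=\alpha(q)$. The subword $\nu=\alpha(p)\alpha(p+1)\cdots\alpha(q-1)$ is then a periodic word in the sense of the paper, since the Markov property yields $\alpha(q)\in\tau(\alpha(q-1))$ and $\alpha(q)=\alpha(p)=\nu(0)$, so $\nu(0)\in\tau(\nu(l_\nu-1))$. Moreover, excising $\nu$ from $\alpha$ (splicing $\alpha(p-1)$ directly to $\alpha(q)$) produces a shorter valid word $\alpha'\in\Omega^f_{\mathcal{S}}$, because the new transition $\alpha(p-1)\to\alpha(q)=\alpha(p)$ was already present in $\alpha$. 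Iterating this excision until the remaining word contains no repeated symbol yields periodic subwords $\nu_1,\ldots,\nu_k\in\Omega^{Per}_{\mathcal{S}}$ and a residual $\gamma\in\Omega^f_{\mathcal{S}}$ with $l_\gamma\leq M+1$, satisfying
\begin{equation*}
l_\alpha = l_\gamma + \sum_{j=1}^k l_{\nu_j}, \qquad \psi(\alpha) = \psi(\gamma) + \sum_{j=1}^k \psi(\nu_j),
\end{equation*}
by additivity of length and of $\psi$ over concatenations.

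Applying this to each $\alpha_i$ and setting $K=\max_{0\leq k\leq M}\|s_k\|$, one obtains
\begin{equation*}
\frac{\psi(\alpha_i)}{l_{\alpha_i}}=\frac{\psi(\gamma_i)}{l_{\alpha_i}}+\sum_{j=1}^{k_i}\frac{l_{\nu_j^{(i)}}}{l_{\alpha_i}}\cdot\frac{\psi(\nu_j^{(i)})}{l_{\nu_j^{(i)}}}.
\end{equation*}
The first term has norm at most $K(M+1)/l_{\alpha_i}\to 0$. Each $\psi(\nu_j^{(i)})/l_{\nu_j^{(i)}}$ lies in $\rho^{Per}_C(F)$ (via the constant sequence $\nu_j^{(i)}$ in the definition) and has norm at most $K$. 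The nonnegative weights $l_{\nu_j^{(i)}}/l_{\alpha_i}$ sum to $1-l_{\gamma_i}/l_{\alpha_i}$, so after renormalizing to a probability vector the induced error is $O(1/l_{\alpha_i})$, again vanishing. Thus $\psi(\alpha_i)/l_{\alpha_i}$ differs from a genuine convex combination of points of $\rho^{Per}_C(F)$ by a quantity tending to $0$, and passing to the limit gives $v\in\textrm{cl}[\textrm{conv}(\rho^{Per}_C(F))]$.

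The main delicacy is the verification that the excision step preserves membership in $\Omega^f_{\mathcal{S}}$; this is precisely where the Markov hypothesis enters, since without it a legitimate itinerary may depend on the symbolic history beyond the immediate predecessor, and the splicing step would not be justified. The rest is a routine convexity and renormalization computation.
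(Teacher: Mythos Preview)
Your proposal is correct and follows essentially the same route as the paper: both invoke Corollary~\ref{rs}, extract periodic subwords from each $\alpha_i$ by pigeonhole on the alphabet (using the Markov property to guarantee the spliced word remains admissible), and show that the resulting convex combination of points $\psi(\nu_j)/l_{\nu_j}\in\rho^{Per}_C(F)$ approximates $\psi(\alpha_i)/l_{\alpha_i}$ up to an error governed by the bounded-length residual. The only cosmetic difference is that the paper packages the error estimate via the single quantity $\frac{\sum_k\psi(\nu_k)}{\sum_k l_{\nu_k}}$ rather than your explicit renormalization, but the underlying computation is identical.
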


\begin{proof}

Let $v\in\rho_C(F)$. Then by Corollary \ref{rs} we have $v=\lim_i\frac{\psi(\alpha_i)}{l_{\alpha_i}}$ with $\alpha_i\in \Omega^f_{\mathcal{S}}$ for every $i\in\N$ and $l{\alpha_i}$ strictly increasing to $+\infty$. For a fixed $\varepsilon>0$, we want to show the existence of an element of $\textrm{conv}(\rho^{Per}_C(F))$ which is $\varepsilon$-close to $v$. We consider $k_{\varepsilon}\in\N$ which verifies that for every $i>k_{\varepsilon}$ we have $l_{\alpha_i}>M$ and $\bigl\lVert \frac{\psi(\alpha_i)}{l_{\alpha_i}}- \frac{\psi(\alpha_i)-\psi(\beta)}{l_{\alpha_i}-l_{\beta}} \bigl\lVert<\frac{\varepsilon}{2}$ for every $\beta\in\Omega^f_{\mathcal{S}}$ with $l_{\beta}\leq M$.

\ Let $i>k_{\varepsilon}$ such that $\bigl\lVert v-\frac{\psi(\alpha_i)}{l_{\alpha_i}}\bigl\lVert< \frac{\varepsilon}{2}$. Since $l_{\alpha_i}>M$ there exist at least two different elements $j_1,j_2\in\{0,...,l_{\alpha_i}\}$ such that $\alpha_{i}(j_1)=\alpha_{i}(j_2)$. This implies that $\alpha_i=\gamma_1\nu_0 \gamma_2$ for some $\gamma_1,\gamma_2\in \Omega^f_{\mathcal{S}}$ with at least one different of $\theta$ and $\nu_0\in\Omega_{\mathcal{S}}^{Per}$. Moreover since $\nu_0$ is a periodic word $\alpha_0=\gamma_1\gamma_2\in \Omega^f_{\mathcal{S}}$ is a well defined non trivial word and $\psi(\alpha_i)=\psi(\alpha_0)+\psi(\nu_0)$ with $l_{\nu_0}+l_{\alpha_{0}}=l_{\alpha_{i}}$. Now if $l_{\alpha_{0}}>M$ we can do the same as we did with $\alpha_i$ and produce $\nu_1\in\Omega_{\mathcal{S}}^{Per}$, $\alpha_1\in\Omega^f_{\mathcal{S}}$ such that $\psi(\alpha_i)=\psi(\alpha_1)+\psi(\nu_1)+\psi(\nu_0)$ with $l_{\nu_0}+l_{\nu_1}+l_{\alpha_{1}}=l_{\alpha_{i}}$.

\ If we apply this argument recursively we obtain $\nu_0,...,\nu_r\in\Omega_{\mathcal{S}}^{Per}$ and $\alpha_f\in\Omega^f_{\mathcal{S}}$ with $l_{\alpha_f}\leq M$ such that $\psi(\alpha_i)=\sum_{k=0}^r\psi(\nu_k)+\psi(\alpha_f)$ and $\sum_{k=0}^r l_{\nu_k}+l{\alpha_f}=l_{\alpha_i}$. Hence by definition of $k_{\varepsilon}$ follows that $\bigl\lVert \frac{\psi(\alpha_i)}{l_{\alpha_i}} - \frac{\sum_{k=0}^r\psi(\nu_k)}{\sum_{k=0}^r l_{\nu_k}}  \bigl\lVert<\frac{\varepsilon}{2}$ which implies that $\bigl\lVert v - \frac{\sum_{k=0}^r\psi(\nu_k)}{\sum_{k=0}^r l_{\nu_k}}  \bigl\lVert<\varepsilon$. Since $w=\frac{\sum_{k=0}^r\psi(\nu_k)}{\sum_{k=0}^r l_{\nu_k}}$ belongs to $\textrm{conv}(\rho^{Per}_C(F))$, this finishes the proof.

\end{proof}

As we have said the goal of this subsection is prove that $\textrm{conv}({\rho_C(F)})$ is a rational polygon whenever $C$ admits a rotational Markov partition $\mathcal{S}$. Due to Lemma \ref{prev} it is sufficient to prove that $\textrm{conv}(\rho_C^{Per}(F))$ is a rational polygon. The next observation is rather simple, so we do not give the proof.

\begin{obs}\label{prevobs}

Suppose that there exists a constant $K>0$ such that for every $w\in\R^2$ the function
\\ $<w, . >:\rho_C^{Per}(F)\rightarrow \R$ is maximized by $\frac{\psi(\nu)}{l_{\nu}}$ with $\nu\in\Omega_{\mathcal{S}}^{Per},l_{\nu}\leq K$. Then, $\textrm{conv}(\rho_C^{Per}(F))$ is a rational polygon with vertices in the finite set $\left\{\frac{\psi(\nu)}{l_{\nu}}:\nu\in\Omega_{\mathcal{S}}^{Per},l_{\nu}\leq K\right\}$.

\end{obs}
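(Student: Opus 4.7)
The strategy is to identify $\textrm{conv}(\rho_C^{Per}(F))$ with the convex hull of the explicit finite set $A:=\{\psi(\nu)/l_\nu : \nu\in\Omega_{\mathcal{S}}^{Per},\ l_\nu\leq K\}$. Once this identification is in place the conclusion is immediate: the set $A$ is finite, since there are at most $\sum_{k=1}^K(M+1)^k$ words of length $\leq K$ in the alphabet $\{0,\dots,M\}$, and every element of $A$ is rational, because $\psi(\nu)=\sum_{j}s_{\nu(j)}\in\Z^2$ by the definition of a rotational partition and $l_\nu\in\N$. Hence $\textrm{conv}(A)$ is automatically a convex polygon whose extreme points form a subset of $A$ and are therefore rational.

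The key step is thus to prove the equality $\textrm{conv}(\rho_C^{Per}(F))=\textrm{conv}(A)$. The inclusion $\textrm{conv}(A)\subset\textrm{conv}(\rho_C^{Per}(F))$ is clear, since for each $\nu\in\Omega_{\mathcal{S}}^{Per}$ the constant sequence $\alpha_i=\nu$ shows $\psi(\nu)/l_\nu\in\rho_C^{Per}(F)$. For the reverse inclusion I would argue by contradiction via the separating hyperplane theorem: if there were some $v\in\rho_C^{Per}(F)\setminus\textrm{conv}(A)$, then since $\textrm{conv}(A)$ is compact and convex we could find $w\in\R^2$ with $\langle w,v\rangle>\max_{a\in A}\langle w,a\rangle$. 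This contradicts the standing hypothesis that $\langle w,\cdot\rangle$ attains its maximum on $\rho_C^{Per}(F)$ at some element of $A$; hence $\rho_C^{Per}(F)\subset\textrm{conv}(A)$, and taking convex hulls finishes the identification.

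The argument is essentially an exercise in convex geometry and I do not foresee a substantive obstacle. The only subtlety worth flagging is that $\rho_C^{Per}(F)$ itself need not be convex, which is exactly why the convex-hull formulation of the conclusion is the natural one and why the separation step is the single place where the hypothesis is genuinely used.
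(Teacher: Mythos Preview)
Your argument is correct. The paper does not actually prove this observation; it states that ``the next observation is rather simple, so we do not give the proof.'' Your separating-hyperplane argument is precisely the standard way to justify such a statement, and all the details you give (finiteness of $A$, rationality of $\psi(\nu)/l_\nu$, the inclusion $A\subset\rho_C^{Per}(F)$, and the separation step) are sound. There is nothing to compare against and nothing to correct.
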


\begin{teo}

Suppose that $\mathcal{S}=\{S_0,...,S_M\}$ is a rotational Markov partition for $C$. Then $\textrm{conv}(\rho_C^{Per}(F))$ is a rational polygon with vertices in $\left\{\frac{\psi(\nu)}{l_{\nu}}:\nu\in\Omega_{\mathcal{S}}^{Per},l_{\nu}\leq M\right\}$.

\end{teo}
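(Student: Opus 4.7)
The strategy is to apply Remark \ref{prevobs} with $K=M$, which reduces the task to the following claim: for every $w\in\R^2$, the supremum of $\langle w,\psi(\nu)/l_\nu\rangle$ over $\nu\in\Omega_{\mathcal{S}}^{Per}$ is realized by some $\nu$ with $l_\nu\leq M$. Since there are only finitely many periodic words of length at most $M$, both the existence of the maximizer and the polygonal, rational character of the convex hull then follow immediately from the remark together with the finiteness and rationality of the candidate vertex set $\{\psi(\nu)/l_\nu:\nu\in\Omega_{\mathcal{S}}^{Per},\,l_\nu\leq M\}$ (each $\psi(\nu)/l_\nu$ is rational because $\psi(\nu)\in\Z^2$ by definition).

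The core ingredient is a shortening argument. Given $\nu\in\Omega_{\mathcal{S}}^{Per}$ with $l_\nu>M$, a pigeonhole application to $\nu:\{0,\ldots,l_\nu-1\}\to\{0,\ldots,M\}$ (exactly as in the proof of Lemma \ref{prev}) supplies positions $0\leq j_1<j_2\leq l_\nu-1$ with $\nu(j_1)=\nu(j_2)$. Splitting $\nu$ at these positions produces two shorter words $\nu'=(\nu(j_1),\ldots,\nu(j_2-1))$ and $\nu''=(\nu(j_2),\ldots,\nu(l_\nu-1),\nu(0),\ldots,\nu(j_1-1))$. The equality $\nu(j_1)=\nu(j_2)$, together with the Markov transitions of $\nu$ and its cyclic closure $\nu(0)\in\tau(\nu(l_\nu-1))$, guarantees that each of $\nu'$ and $\nu''$ satisfies the cyclic Markov condition required of a periodic word. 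Since $l_{\nu'}+l_{\nu''}=l_\nu$ and $\psi(\nu)=\psi(\nu')+\psi(\nu'')$ by construction, one obtains the convex combination
$$\frac{\psi(\nu)}{l_\nu}\;=\;\frac{l_{\nu'}}{l_\nu}\cdot\frac{\psi(\nu')}{l_{\nu'}}\;+\;\frac{l_{\nu''}}{l_\nu}\cdot\frac{\psi(\nu'')}{l_{\nu''}},$$
and consequently $\langle w,\psi(\nu)/l_\nu\rangle\leq\max\{\langle w,\psi(\nu')/l_{\nu'}\rangle,\,\langle w,\psi(\nu'')/l_{\nu''}\rangle\}$.

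Iterating the shortening on whichever of the two pieces still has length exceeding $M$, and noting that lengths strictly decrease at each step, the process terminates in finitely many rounds with a periodic word $\tilde\nu$ of length $l_{\tilde\nu}\leq M$ satisfying $\langle w,\psi(\tilde\nu)/l_{\tilde\nu}\rangle\geq\langle w,\psi(\nu)/l_\nu\rangle$. This verifies the hypothesis of Remark \ref{prevobs} in every direction $w$, finishing the proof. The main delicate point is verifying that the shortening truly yields two \emph{periodic} words: one must check Markov compatibility at the internal seam produced by the cut (which relies crucially on $\nu(j_1)=\nu(j_2)$) and confirm that $\nu''$ inherits the original cyclic closure of $\nu$. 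Both checks are immediate once the decomposition is written out, and any minor off-by-one in the pigeonhole threshold is handled exactly as in Lemma \ref{prev}.
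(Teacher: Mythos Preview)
Your proof is correct and follows essentially the same approach as the paper: both use pigeonhole to split a long periodic word into two shorter periodic words whose displacement averages form a convex combination, then iterate to reduce to length at most $M$ and invoke Remark~\ref{prevobs}. Your presentation is slightly more direct (avoiding the paper's auxiliary $\varepsilon$-argument), but the underlying decomposition and shortening argument are identical.
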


\begin{proof}

By definition $\rho_C^{Per}(F)$ is contained in the compact set $\rho_C(F)$. Hence, for a fixed $w\in\R^2$ there exists $L=\sup\left\{\langle w,v\right\rangle:v\in \rho_C^{Per}(F)\}=\sup\left\{\frac{\left\langle w,\psi(\nu)\right\rangle}{l_{\nu}}:\nu\in\Omega_{\mathcal{S}}^{Per}\right\}$. Our goal is to prove that $L=\sup\left\{\frac{\left\langle w,\psi(\nu)\right\rangle}{l_{\nu}}:\nu\in\Omega_{\mathcal{S}}^{Per},l_{\nu}\leq M\right\}$ where $M$ is the size of the partition. Due to the last remark, this implies the desired result. Hence for every $\varepsilon>0$ we want to find an element $\nu_{\varepsilon}\in\Omega_{\mathcal{S}}^{Per}$ with $l_{\nu_{\varepsilon}}\leq M$ such that $L-\frac{\left\langle w,\psi(\nu_{\varepsilon})\right\rangle}{l_{\nu_{\varepsilon}}}<\varepsilon$.

\ For a fixed $\varepsilon>0$ we consider $\nu_0\in\Omega_{\mathcal{S}}^{Per}$ such that $L-\left\langle w,\frac{\psi(\nu_0)}{l_{\nu_0}}\right\rangle <\varepsilon$. If $l_{\nu_0}\leq M$ we are done. If $l_{\nu_0}> M$ there exists $\nu_0'\in\Omega_{\mathcal{S}}^{Per}$ and $\alpha_1,\alpha_2\in\Omega_{\mathcal{S}}^{f}$ with at least one of the two different from $\theta$ such that $\nu_0=\alpha_1\nu_0'\alpha_2$, $\alpha_1\alpha_2\in\Omega_{\mathcal{S}}^{Per}$ and $l_{\alpha_1}+l_{\alpha_2}+l_{\nu_0'}=l_{\nu_0}$. Therefore $\frac{\left\langle w,\psi(\nu_0)\right\rangle}{l_{\nu_0}}=\frac{\left\langle w,\psi(\alpha_1\alpha_2)\right\rangle+\left\langle w,\psi(\nu_0')\right\rangle}{l_{\alpha_1}+l_{\alpha_2}+l_{\nu_0'}}$ which implies that either $\frac{\left\langle w,\psi(\nu_0)\right\rangle}{l_{\nu_0}}\leq \frac{\left\langle w,\psi(\nu_0')\right\rangle}{l_{\nu_0'}}$ or $\frac{\left\langle w,\psi(\nu_0)\right\rangle}{l_{\nu_0}}\leq \frac{\left\langle w,\psi(\alpha_1\alpha_2)\right\rangle}{l_{\alpha_1\alpha_2}}$. Thus there exist $\nu_1\in\Omega_{\mathcal{S}}^{Per}$ such that $\frac{\left\langle w,\psi(\nu_0)\right\rangle}{l_{\nu_0}}\leq \frac{\left\langle w,\psi(\nu_1)\right\rangle}{l_{\nu_1}}$ and $l_{\nu_1}< l_{\nu_0}$.

\ Applying the above argument recursively, we eventually find a periodic word $\nu_{\varepsilon}\in \Omega_{\mathcal{S}}^{Per}$ such that $l_{\nu_{\varepsilon}}\leq M$ and $L-\frac{\left\langle w,\psi(\nu_{\varepsilon})\right\rangle}{l_{\nu_{\varepsilon}}}<\varepsilon$.

\end{proof}

\begin{cor}\label{polygon}

Suppose that $\mathcal{S}$ is a rotational Markov partition of $C$. Then $\textrm{conv}(\rho_C(F))$ is a rational polygon with vertices in $\left\{\frac{\psi(\nu)}{l_{\nu}}:\nu\in\Omega_{\mathcal{S}}^{Per},l_{\nu}\leq M\right\}$.

\end{cor}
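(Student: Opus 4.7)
The corollary is essentially a packaging of the preceding theorem together with Lemma \ref{prev}, so the plan is short. First I would collect three facts already at hand: (i) every periodic word $\nu\in\Omega_{\mathcal{S}}^{Per}$ satisfies $\psi(\nu)/l_{\nu}\in\rho_C(F)$ (remarked before Lemma \ref{prev}), whence $\rho_C^{Per}(F)\subset\rho_C(F)$; (ii) by Lemma \ref{prev}, $\rho_C(F)\subset\textrm{cl}[\textrm{conv}(\rho_C^{Per}(F))]$; and (iii) by the theorem just proved, $\textrm{conv}(\rho_C^{Per}(F))$ is a rational polygon with vertices in the finite set $V:=\{\psi(\nu)/l_{\nu}:\nu\in\Omega_{\mathcal{S}}^{Per},\ l_{\nu}\leq M\}$.

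Next I would observe that (iii) makes the closure in (ii) superfluous: a convex polygon with finitely many vertices is the convex hull of its vertex set, hence a compact (and in particular closed) subset of $\R^2$. Thus $\textrm{cl}[\textrm{conv}(\rho_C^{Per}(F))]=\textrm{conv}(\rho_C^{Per}(F))=\textrm{conv}(V)$, and (ii) simplifies to $\rho_C(F)\subset\textrm{conv}(V)$.

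Taking the convex hull in (i) also gives $\textrm{conv}(\rho_C^{Per}(F))\subset\textrm{conv}(\rho_C(F))$, i.e. $\textrm{conv}(V)\subset\textrm{conv}(\rho_C(F))$. Combining with the previous inclusion via the fact that $\textrm{conv}(\rho_C(F))$ is the smallest convex set containing $\rho_C(F)$, I obtain $\textrm{conv}(\rho_C(F))=\textrm{conv}(V)$, which is exactly a rational polygon whose vertices lie in $V$.

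There is no real obstacle here; the only subtlety worth stating explicitly is the closedness of a finitely generated convex hull, which is what lets me promote Lemma \ref{prev}'s closure-containment to a bare containment and conclude equality of the two convex hulls. Once that is pointed out, the proof is a one-line combination of Lemma \ref{prev} and the theorem.
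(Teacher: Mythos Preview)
Your argument is correct and is exactly the intended one: the paper states the corollary without proof because it follows immediately by combining Lemma~\ref{prev} with the preceding theorem, and you have spelled out precisely that combination. The only detail you added beyond the paper is the explicit observation that a finitely generated convex hull is closed, which is indeed the one point needed to drop the closure in Lemma~\ref{prev} and conclude equality.
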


\end{subsection}

\end{section}

\begin{section}{Construction of the set $\mathcal{D}$.}\label{theset}

\ Let $M$ be a compact manifold and $f:M\rightarrow M$ be a diffeomorphism. We say that a compact set $\Lambda\subset M$ is a hyperbolic set if there exist $C>0$, $\lambda\in(0,1)$ and a decomposition of the tangent bundle over $\Lambda$ given by $T_xM=E^s_x\oplus E^u_x$ (stable and unstable vector bundles) such that:

\begin{itemize}

\item[(1)] $D_xf(E^s_x)=E^s_{f(x)},D_xf^{-1}(E^u_x)=E^u_{f^{-1}(x)}$ for every $x\in\Lambda$.

\item[(2)] $\lVert (D_xf)^n|_{E^s_x}\lVert<C\cdot\lambda^n$ and $\lVert (D_xf)^{-n}|_{E^u_x}\lVert<C\cdot\lambda^n$ for every $x\in\Lambda,n\in\N$.

\end{itemize}http://www.180.com.uy/

Further, we say that $\Lambda$ is a basic piece of $f$ if it is a compact invariant hyperbolic set, transitive ($\Lambda=cl[\{f^n(x):n\in\Z\}]$ for some $x\in\Lambda$) and locally maximal (there exist a neighborhood $U$ of $\Lambda$ such that $\bigcap_{n\in\Z}f^n(U)=\Lambda$).

Let $f:M\rightarrow M$ be a homeomorphism and $x\in M$, Then:

\begin{itemize}

\item for $\varepsilon>0$ the local stable set of size $\varepsilon$ is defined as
$$W^s_{\varepsilon}(x,f)=\{y\in M:d(f^n(y),f^n(x))<\varepsilon\mbox{ for every }n\in\N\},$$
and the local unstable set of size $\varepsilon$ as 
$$W^u_{\varepsilon}(x,f)=\{y\in M:d(f^{-n}(y),f^{-n}(x))<\varepsilon\mbox{ for every }n\in\N\};$$

\item the stable set is defined as 
$$W^s(x,f)=\{y\in M:\lim_{n\to +\infty}d(f^n(y),f^n(x))= 0\},$$ 
and the unstable set as 
$$W^u(x,f)=\{y\in M:\lim_{n\to -\infty}d(f^{-n}(y),f^{-n}(x))= 0\}.$$

\end{itemize}

For a hyperbolic set $\Lambda$ of some diffeomorphism $f$ we have the so called Stable Manifold Theorem.

\begin{teo}[Stable Manifold Theorem, \cite{Katok}]\label{smt}

Let $M$ be a compact manifold and $f:M\rightarrow M$ a $C^r$ diffeomorphism with hyperbolic set $\Lambda$. Then there exists $\varepsilon>0$ such that for every point $x\in\Lambda$ we have:

\begin{itemize}

\item[(1)] $W^s_{\varepsilon}(x,f)$ is a $C^r$ embedded manifold such that $T_x W^s_{\varepsilon}(x,f)=E^s_x$ and $W^s_{\varepsilon}(x,f)$ varies continuously with $x$ (as $C^r$ sub-manifold);

\item[(2)] $W^s_{\varepsilon}(x,f)\subset W^s(x,f)$;

\item[(3)] $W^s(x,f)=\bigcup_{n\in\N}f^{-n}(W^s_{\varepsilon}(x,f))$ is a $C^r$ immersed manifold.

\end{itemize}
\end{teo}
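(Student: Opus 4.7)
The plan is to establish the theorem via Hadamard's graph transform method. First, using compactness of $\Lambda$ and continuity of the hyperbolic splitting $E^s\oplus E^u$, I would set up local charts: for each $x\in\Lambda$ use the exponential map $\exp_x$ to identify a neighborhood of $x$ with a neighborhood of $0$ in $T_xM=E^s_x\oplus E^u_x$. In these coordinates $f$ is represented by a family of maps $\hat f_x:U_x\subset E^s_x\oplus E^u_x\to E^s_{f(x)}\oplus E^u_{f(x)}$ whose derivatives at $0$ equal the linear hyperbolic maps $D_xf$; by shrinking the chart radius to some $\varepsilon>0$, the nonlinear remainder $\hat f_x - D_xf$ becomes $\eta$-Lipschitz with $\eta$ arbitrarily small, uniformly in $x$.

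Next, I would introduce the complete metric space $\Sigma$ of continuous families $\sigma=(\sigma_x)_{x\in\Lambda}$ of $1$-Lipschitz sections $\sigma_x:B^s_\varepsilon(0)\subset E^s_x\to E^u_x$ with $\sigma_x(0)=0$, endowed with the uniform norm, and define the graph transform $\mathcal F:\Sigma\to\Sigma$ implicitly by
\[ \mathrm{graph}((\mathcal F\sigma)_x) \;=\; \hat f_x^{-1}\bigl(\mathrm{graph}(\sigma_{f(x)})\bigr)\cap\bigl(B^s_\varepsilon(0)\oplus B^u_\varepsilon(0)\bigr). \]
Hyperbolicity---contraction by factor $\lambda$ in $E^s$ and expansion by $\lambda^{-1}$ in $E^u$---together with the small Lipschitz norm of the nonlinear part ensures via the implicit function theorem that $\mathcal F$ is well-defined and is a contraction on $\Sigma$ for some factor $\mu<1$. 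Its unique fixed point $\sigma^*$ produces the local stable manifold as $W^s_\varepsilon(x,f)=\exp_x(\mathrm{graph}(\sigma^*_x))$; continuity of $\sigma^*_x$ in $x$ yields the continuous dependence stated in item~(1), while $\sigma^*_x(0)=0$ and $D\sigma^*_x(0)=0$ give the tangency $T_xW^s_\varepsilon(x,f)=E^s_x$.

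For $C^r$ regularity, I would apply the fiber contraction theorem of Hirsch--Pugh--Shub to lifted graph transforms acting on $k$-jets of sections, $1\le k\le r$, inductively upgrading $C^{k-1}$ regularity to $C^k$; this is the main technical obstacle, as it requires careful norm estimates on jet bundles and uses spectral-gap conditions of the form $\lambda^{k}\cdot\lambda^{-1}<1$. Item~(2) is built into the construction: iterating the graph transform shows that every point in $W^s_\varepsilon(x,f)$ has forward orbit contracting exponentially toward that of $x$, hence lies in $W^s(x,f)$. Item~(3) follows because any $y\in W^s(x,f)$ satisfies $d(f^n(y),f^n(x))\to 0$, so $f^n(y)\in W^s_\varepsilon(f^n(x),f)$ for $n$ large, and the $C^r$ immersed manifold structure is obtained by pulling back these local pieces through iterates of $f^{-1}$.
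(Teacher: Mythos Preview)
The paper does not prove this theorem at all: it is quoted as a standard background result with a citation to Katok--Hasselblatt, and no argument is given. Your outline via the Hadamard graph transform is a reasonable sketch of one of the standard proofs (and is essentially the approach in the cited reference), so there is nothing substantive to compare against here.
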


The analogous result holds for the unstable set.

A first remark concerning this theorem is that for a basic piece the dimension of the stable and unstable bundles are constant and they vary continuously (this can be proved directly from the definition of basic piece). This implies that we can define the index of a basic piece as the dimension of its unstable vector bundle. When a basic piece has index 1 we say it is a saddle piece. Further properties for basic sets will be presented in the next section. Next we want to define axiom A diffeomorphisms. We denote by $\textrm{Per}(f)$ the set of periodic points of $f$ and recall that the non-wandering set of $f$ is defined as:

$$\Omega(f):=\{x\in M:\forall U\mbox{ neighborhood of $x$ }f^n(U)\cap U\neq\phi\mbox{ for some }n\in\N \}$$

Let $M$ be a compact manifold. We say that a diffeomorphism $f:M\rightarrow M$ is axiom A if

\begin{itemize}

\item $\Omega(f)=\textrm{cl}[Per(f)]$;

\item $\Omega(f)$ is hyperbolic.

\end{itemize}

The dynamical properties of these systems are well-understood. Further, under additional hypothesis these systems are \textit{robust} in different senses and play a crucial role in the theory of generic dynamics (see references below). We will make use of these facts for constructing the set $\mathcal{D}$.

\begin{teo}[Spectral decomposition theorem, \cite{new}]\label{sdt}

Let $f:M\rightarrow M$ be an axiom A diffeomorphsim. Then, there exist a finite number of pairwise disjoint basic pieces $\Lambda_1,...,\Lambda_n\subset M$ for $f$ such that $\Omega(f)=\Lambda_1\cup...\cup\Lambda_n$. Further, for each $i=1,...,n$ there exists a finite number of compact and pairwise disjoint sets $\Lambda_{i_0},...,\Lambda_{i_{k-1}}$ such that:

\begin{itemize}

\item for every $j=0,...,k-1$ $f(\Lambda_{i_j})=\Lambda_{i_{j+1}}$ $mod(k-1)$;

\item for any $x\in \Lambda_{i_j}$ the sets $W^u(x,f),W^s(x,f)$ are dense in $\Lambda_{i_j}$;

\item $f^{k}\mid_{\Lambda_{i_j}}$ is topologically mixing.

\end{itemize}

\end{teo}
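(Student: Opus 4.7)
The plan is to follow the classical Smale--Bowen scheme. The first ingredient is a \emph{local product structure} on $\Omega(f)$. Theorem~\ref{smt} supplies a uniform $\varepsilon>0$ for which $W^s_\varepsilon(x,f)$ and $W^u_\varepsilon(x,f)$ are transverse, continuously varying disks for $x\in\Omega(f)$, so whenever $d(x,y)$ is sufficiently small the intersection $W^s_\varepsilon(x,f)\cap W^u_\varepsilon(y,f)$ is a single transverse point, which I denote $[x,y]$. Using $\Omega(f)=\textrm{cl}[\textrm{Per}(f)]$ and the shadowing lemma (a standard consequence of hyperbolicity), I would show $[x,y]\in\Omega(f)$, so locally $\Omega(f)$ is the product of small stable and unstable slices.

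Next I would introduce on $\Omega(f)$ the equivalence relation $x\sim y$ iff $W^u(x,f)$ meets $W^s(y,f)$ transversally and $W^u(y,f)$ meets $W^s(x,f)$ transversally. Symmetry is built in, while transitivity follows from the inclination ($\lambda$-)lemma applied to the transverse heteroclinic intersections. The local product structure then forces each equivalence class to be open in $\Omega(f)$: if $x\sim y$ and $x'\in\Omega(f)$ is close enough to $x$, the bracket $[x',x]$ realises a transverse intersection of $W^u_\varepsilon(x,f)$ with $W^s_\varepsilon(x',f)$ inside $\Omega(f)$, and $\lambda$-lemma propagates this to a transverse intersection of $W^u(x',f)$ with $W^s(y,f)$ (and symmetrically). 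Compactness of $\Omega(f)$ then forces finitely many classes; the closures $\Lambda_1,\ldots,\Lambda_n$ are pairwise disjoint, compact, $f$-invariant and hyperbolic. Local maximality is immediate from the local product structure by taking a neighborhood so thin in the stable and unstable directions that no bracket can connect a point to a different class. Transitivity of $f|_{\Lambda_i}$ is obtained by constructing a pseudo-orbit visiting a countable basis of open sets of $\Lambda_i$ through concatenation of periodic orbits connected by the heteroclinic intersections provided by $\sim$, and then invoking shadowing to realise it as a genuine orbit.

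For the cyclic refinement, I would fix $\Lambda=\Lambda_i$ and use the finer sub-equivalence $x\approx y$ iff $W^u(x,f)$ meets $W^s(y,f)$ transversally at a point of $\Lambda$, dropping the symmetry requirement. The same openness/closedness argument gives finitely many clopen equivalence classes in $\Lambda$ which $f$ permutes, and transitivity of $f|_{\Lambda}$ forces the permutation to be a single cycle of length $k$, yielding the desired $\Lambda_{i_0},\ldots,\Lambda_{i_{k-1}}$. Density of $W^u(x,f)$ and $W^s(x,f)$ inside the component $\Lambda_{i_j}$ containing $x$ follows from transitivity of $f^k$ on that component combined with the local product structure, and topological mixing of $f^k|_{\Lambda_{i_j}}$ drops out of the bracket construction: given open $U,V\subset\Lambda_{i_j}$, iterate $U$ forward until a piece of unstable disk is long enough to transversally hit a stable disk through a point of $V$, and conclude via $\lambda$-lemma. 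The main obstacle throughout is the passage from the ``combinatorial'' pseudo-orbits built out of periodic points and transverse heteroclinic points to genuine trajectories in $\Omega(f)$; this is exactly where the shadowing lemma, and hence the full strength of hyperbolicity, is indispensable, and where one must also ensure that every bracket $[x,y]$ produced by the local product structure actually lies in $\Omega(f)$ rather than escaping it.
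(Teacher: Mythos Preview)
The paper does not prove this theorem; it is quoted from \cite{new} as background and used as a black box (see the citation in the heading of Theorem~\ref{sdt}). There is therefore no ``paper's proof'' to compare against. Your outline is the classical Smale--Bowen argument that underlies the cited reference, so in spirit it matches what the paper is invoking.

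One point in your sketch deserves tightening. In the cyclic refinement you introduce a relation $x\approx y$ by ``dropping the symmetry requirement''; but then $\approx$ is not an equivalence relation, so speaking of its ``equivalence classes'' is not well-defined. The standard route is to work on $\mathrm{Per}(f)\cap\Lambda$ with the relation $p\approx q$ iff $W^u(p)\pitchfork W^s(q)\neq\emptyset$; on a transitive basic piece this relation \emph{is} symmetric (via the $\lambda$-lemma and density of periodic points), its closures of classes are the $\Lambda_{i_j}$, and $f$ permutes them cyclically because $f$ carries $W^u(p)$ to $W^u(f(p))$. Once you phrase it this way the rest of your argument goes through.
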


We say that an axiom A diffeomorphism satisfies the \textit{transversality condition} if for any two points $x,y\in \Omega(f)$, we have that $W^u(x,f)$ intersects $W^s(y,f)$ transversally.

The following theorem shows the semi-stability of axiom A diffeomorphisms with the transversality condition (for any compact manifold) and is due to Z. Nitecki.

\begin{teo}\cite{Zni}

Let $f\in\homeo$ be an axiom A diffeomorphism with the transversality condition. Then, there exists a neighborhood $\mathcal{U}(f)\subset \homeo$ of $f$ such that any $g$ of $\mathcal{U}(f)$ is semi-conjugate to $f$ by  a semiconjugacy $h$ homotopic to the identity.

\end{teo}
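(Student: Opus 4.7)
The plan is to construct the semi-conjugacy $h$ through a shadowing argument. Choose $\varepsilon>0$ small and let $\mathcal{U}(f)$ be the $\varepsilon$-neighborhood of $f$ in the $C^0$ topology of $\homeo$. For any $g\in\mathcal{U}(f)$ and $x\in\T^2$, the $g$-orbit $(g^n(x))_{n\in\Z}$ is an $\varepsilon$-pseudo-orbit for $f$ because $d(f(g^n(x)),g^{n+1}(x))<\varepsilon$. If one can show that each such pseudo-orbit is uniquely shadowed by a true $f$-orbit and that this shadow depends continuously on the pseudo-orbit, then setting $h(x)$ equal to the initial point of the shadowing orbit yields a continuous $h$ satisfying $h\circ g=f\circ h$ (by uniqueness applied to the shift of the pseudo-orbit), with $d(h(x),x)$ uniformly small.

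The standard hyperbolic shadowing lemma applied to the basic pieces of the Spectral Decomposition Theorem produces shadows only for pseudo-orbits that stay near some $\Lambda_i$. To promote this to a global statement one uses the filtration of $\T^2$ associated to an axiom A diffeomorphism: there exists a chain of open sets $\emptyset=U_0\subsetneq U_1\subsetneq\cdots\subsetneq U_n=\T^2$ with $f(\overline{U_i})\subset U_i$, and whose maximal invariant set in $U_i\setminus U_{i-1}$ is exactly one of the basic pieces $\Lambda_i$. Any true or pseudo-orbit descends monotonically through the filtration, so for any given pseudo-orbit the backward tail accumulates on some $\Lambda_{i_-}$ and the forward tail on some $\Lambda_{i_+}$ with $i_-\geq i_+$. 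Local shadowing on these two basic pieces produces orbit fragments whose extensions lie in $W^u(\Lambda_{i_-})$ (past) and $W^s(\Lambda_{i_+})$ (future), and the transversality hypothesis $W^u(\Lambda_{i_-})\pitchfork W^s(\Lambda_{i_+})$ provides a unique heteroclinic $f$-orbit with the prescribed tails, producing the desired global shadow.

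The main obstacle is exactly this gluing step: the classical shadowing lemma is a local statement inside a neighborhood of a hyperbolic set, and extending it globally on the ambient manifold $\T^2$ requires the transversality hypothesis to guarantee both existence and uniqueness of the connecting heteroclinic orbit. Once global shadowing is established, the remaining verifications are routine: the identity $h\circ g=f\circ h$ is an immediate consequence of uniqueness applied to the shifted pseudo-orbit; continuity of $h$ follows from continuous dependence of the shadow on the pseudo-orbit (an implicit-function-theorem-type consequence of the proof of shadowing on hyperbolic sets, together with the locally constant choice of basic pieces $i_\pm$ in $x$); and $h$ is homotopic to the identity because by shrinking $\varepsilon$ the shadowing distance $\delta(\varepsilon)$ becomes smaller than the injectivity radius of $\T^2$, so the linear interpolation along geodesics between $x$ and $h(x)$ provides an explicit homotopy from $h$ to $\mathrm{id}_{\T^2}$.
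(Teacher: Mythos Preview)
The paper does not actually prove this theorem: it is quoted as a result of Nitecki \cite{Zni}, with no argument given. The only addition is a one-line remark immediately after the statement, to the effect that $h$ being homotopic to the identity is not explicit in Nitecki's paper but follows because the semi-conjugacy is produced via the Shadowing Lemma and can therefore be taken $C^0$-close to the identity.

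Your outline is exactly the shadowing-based strategy the paper alludes to in that remark, and it is the standard route to Nitecki's semi-stability theorem (global pseudo-orbit tracing obtained by combining local shadowing on each basic piece with the filtration coming from the no-cycles condition, the transversality hypothesis being used to glue and to get uniqueness). So your approach is consistent with what the paper has in mind; there is simply no proof in the paper to compare it with. One small caution on your sketch: a pseudo-orbit may visit more than two basic pieces as it descends the filtration, so the gluing step must be iterated along a chain $\Lambda_{i_0}\to\Lambda_{i_1}\to\cdots\to\Lambda_{i_k}$ rather than handled by a single transverse intersection $W^u(\Lambda_{i_-})\pitchfork W^s(\Lambda_{i_+})$; and the uniqueness of the global shadow (needed for $h\circ g=f\circ h$ and for continuity) is not automatic from local expansiveness on the pieces and is where strong transversality is genuinely used.
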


We mention here that the fact of the map $h$ to be homotopic to the identity is not explicitly stated in Z. Nietecki's article, but is a well known fact that this kind of maps can be constructed by the \emph{Shadowing L\index{\footnote{}}emma} such that they are close to the identity, and therefore homotopic to the identity. The above theorem, together with the properties of the rotation set given in Section \ref{intro}, implies the following corollary:

\begin{cor}\label{corneigh}

Let $f\in\homeo$ be an axiom A diffeomorphism that satisfies the transversality condition. Then there exists an open neighborhood $\mathcal{U}(f)\subset \homeo$ of $f$ such that for every $g\in\mathcal{U}(f)$ we have $\rho(g)=\rho(f)$.

\end{cor}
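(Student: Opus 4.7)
The plan is essentially a one-line application of the two results immediately preceding the corollary: Nitecki's semi-stability theorem and property (2) of the rotation set stated in Section \ref{intro}. By Nitecki's theorem, there is an open neighborhood $\mathcal{U}(f)\subset\homeo$ of $f$ such that every $g\in\mathcal{U}(f)$ admits a continuous semi-conjugacy $h_g\circ g=f\circ h_g$ with $h_g:\T^2\to\T^2$ homotopic to the identity. The remark preceding the corollary says that the shadowing construction produces $h_g$ close to the identity; in particular $h_g$ is surjective, so $h_g(\T^2)=\T^2$.

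The corollary then follows by applying property (2) of Section \ref{intro} to the closed $g$-invariant set $C=\T^2$, which yields
$$\rho(f)=\rho_{\T^2}(f)=\rho_{h_g(\T^2)}(g)=\rho_{\T^2}(g)=\rho(g),$$
so $\mathcal{U}(f)$ has the desired property.

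The only point that needs care, and which I would identify as the main (mild) obstacle, is that the corollary is stated as an equality of rotation sets of specific lifts in $\R^2$ rather than merely modulo $\Z^2$. One must therefore choose compatible lifts $F,G,H$ of $f,g,h_g$. For arbitrary initial lifts, both $H\circ G$ and $F\circ H$ lift the same map $h_g\circ g=f\circ h_g$, hence they differ by an integer translation, and after replacing $F$ with a suitable integer translate one arranges $H\circ G=F\circ H$ on $\R^2$. Using that $H:\R^2\to\R^2$ is surjective (because $h_g$ is surjective and $H$ commutes with $\Z^2$-translations) and that $H-\mathrm{Id}$ is bounded (because $h_g$ is homotopic to the identity), the iterated identity $H\circ G^n=F^n\circ H$ gives $G^n(x)-x=F^n(H(x))-H(x)+O(1)$, from which $\rho(G)=\rho(F)$ follows directly from the definition of the rotation set. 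No deeper dynamical input is needed beyond Nitecki's theorem and property (2).
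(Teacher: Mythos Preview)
Your argument is correct and matches the paper's approach exactly: the paper gives no explicit proof but simply states that the corollary follows from Nitecki's semi-stability theorem together with the properties of the rotation set listed in Section~\ref{intro}, which is precisely what you spell out. Your additional care concerning lifts is harmless but not strictly necessary here, since $\rho(f)$ is by definition $\rho(F)\ (\mathrm{mod}\ \Z^2)$.
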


We say that a diffeomorphism $f:M\rightarrow M$ is a fitted axiom A diffeomorphism if it is axiom A, has the transversality condition and $\Omega(f)$ is totally disconnected. For a given manifold $M$ we denote by $\mathcal{F}$ the family of fitted axiom A diffeomorphisms.

\begin{teo}\cite{SS}\label{dense}

Let $M$ be a compact manifold. Then $\mathcal{F}$ is dense in $\textrm{Homeo}(M)$ with the $C^0$ topology.

\end{teo}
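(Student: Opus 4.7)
The plan is to show that given $f \in \textrm{Homeo}(M)$ and $\varepsilon > 0$, we can find $g \in \mathcal{F}$ with $d_{C^0}(f, g) < \varepsilon$. I would proceed in two stages.

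First, I would reduce to smooth diffeomorphisms. By standard smoothing, $\textrm{Diff}^\infty(M)$ is $C^0$-dense in $\textrm{Homeo}(M)$ in the dimensions of interest (in particular on $\mathbb{T}^2$, where the theorem is applied). So we may pick $f_1 \in \textrm{Diff}^\infty(M)$ with $d_{C^0}(f, f_1) < \varepsilon / 2$, and it suffices to approximate $f_1$ to within $\varepsilon/2$ by an element of $\mathcal{F}$.

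Second, I would build a fitted axiom A diffeomorphism $C^0$-close to $f_1$ via a local-to-global construction. Fix a handle decomposition of $M$ whose handles have diameter much smaller than $\varepsilon$, and inside each handle install a small zero-dimensional hyperbolic basic piece of the appropriate index: hyperbolic periodic sinks in the $0$-handles, sources in the top-dimensional handles, and small horseshoe-type pieces of intermediate index in the remaining handles. Glue these local models along the handle filtration $M_0 \subset M_1 \subset \cdots \subset M_n = M$ by gradient-like dynamics that reproduce the large-scale motion of $f_1$ at the handle scale (so that each handle is carried to the handle containing the image of its core point under $f_1$). The resulting diffeomorphism $g_0$ has $\Omega(g_0)$ equal to the finite union of the installed zero-dimensional hyperbolic pieces, hence is axiom A with totally disconnected non-wandering set. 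A further $C^\infty$-small generic perturbation (Kupka--Smale) produces transversality of stable and unstable manifolds without destroying hyperbolicity or the $C^0$ approximation.

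The main obstacle is verifying that the gluing introduces no additional recurrent behaviour beyond the installed basic pieces, for otherwise $\Omega(g_0)$ could pick up extra mass that spoils either axiom A or total disconnectedness. This is handled by the filtration argument: along the skeleta, the gluing dynamics strictly decrease handle index on wandering points, so no orbit outside the installed pieces can return, and the union of installed pieces exhausts $\Omega(g_0)$. A secondary technical point is the transversality step, but this follows from the $C^1$-genericity of transversality among axiom A diffeomorphisms (Kupka--Smale), achievable by perturbations that are small in $C^1$ (hence also in $C^0$) and preserve hyperbolicity, yielding the required $g \in \mathcal{F}$.
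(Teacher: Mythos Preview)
The paper does not give its own proof of this statement: it is quoted from \cite{SS} (Shub--Sullivan) and immediately followed by the remark that ``a nice exposition of this result in surfaces can be found in \cite{Franks}.'' So there is nothing in the paper to compare your argument against; the theorem is used as a black box.

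That said, your two-stage outline is in the spirit of the cited construction. The reduction to $\textrm{Diff}^\infty$ by $C^0$-smoothing is standard on surfaces, and the second stage---a fine handle (or simplicial) decomposition with a zero-dimensional hyperbolic local model of the correct index placed in each handle, glued by gradient-like dynamics along a filtration---is exactly the Shub strategy. Your identification of the main obstacle (ruling out extra recurrence outside the installed pieces) is also correct, and the filtration argument is the right tool.

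One point to be careful about: the phrase ``each handle is carried to the handle containing the image of its core point under $f_1$'' is not quite how the $C^0$-closeness is achieved. The diffeomorphism $g_0$ is not built to permute handles according to $f_1$; rather, one first isotopes $f_1$ (a $C^0$-small move) so that it respects the handle filtration, and then modifies it handle-by-handle to install the hyperbolic models. The $C^0$-proximity comes from the smallness of the handles and of these local modifications, not from matching a combinatorial action on handles. As written, your description could be read as constructing $g_0$ from scratch rather than as a perturbation of $f_1$, which would make the $C^0$-estimate unclear. This is a matter of presentation rather than a fatal gap, but it is where the actual work in \cite{SS} and \cite{Franks} lies.
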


A nice exposition of this result in surfaces can be found in \cite{Franks}. We now turn to the existence of Markov partitions for totally disconnected saddle basic pieces of $f\in\homeo$, with some extra properties needed for further applications in the following sections. The main result that we are going to provide can be found for instance in \cite{Be}, where Markov partitions are defined in a stronger way, verifying at once all the properties that we request. We should say that the general idea of associating Markov partitions to basic pieces in any dimension, is due to R. Bowen \cite{bow}. Next we introduce some notations and remarks which will be used not in this section, but in the last section of this article.

\ We say that a hyperbolic set $\Lambda$ has local product structure if there exists $\delta_0>0$ such that given $\varepsilon_0>0$ as in the (Un)Stable Manifold Theorem and any two points $x,y\in\Lambda$ with $d(x,y)<\delta_0$ we have that $W^u_{\varepsilon_0}(x,f)\cap W^s_{\varepsilon_0}(y,f)\subset\Lambda$. It turns out that for a hyperbolic set $\Lambda$, the existence of a local product structure is equivalent to be locally maximal. Hence every basic piece has local product structure.

\ Let $\mathcal{S}$ be a partition of a totally disconnected saddle basic piece $\Lambda$. We say that $S\in\mathcal{S}$ is an $us$-box if

\begin{itemize}

\item $\textrm{diam}(S)<\delta_0$, where $\delta_0$ is given by the local product structure;

\item there exists a homeomorphism $h:[0,1]^2\rightarrow S$ such that for every $x\in S\cap\Lambda$ we have that

$$W^u_{\varepsilon_0}(x,f)\cap S=h([0,1]\times\{b_x\}),\ W^s_{\varepsilon_0}(x,f)\cap S=h(\{a_x\}\times[0,1])$$ where $(a_x,b_x)=h^{-1}(x)$, and 

$$h(\{(0,0),(0,1),(1,0),(1,1)\})\subset \Lambda.$$

\end{itemize}

We say that $\mathcal{S}$ is a $us$-partition if every element in $\mathcal{S}$ is a $us$-box. When $\mathcal{S}$ is a also a Markov partition we say that $\mathcal{S}$ is a $us$-Markov partition. Let $S$ be a $us$-box of a partition of $\Lambda$. We say that $H\subset S$ is a horizontal rectangle if $H=h([0,1]\times [c,d])$. Analogously we say that $V\subset S$ is a vertical rectangle if $V=h([a,b]\times [0,1])$.

\ Given an $us$-partition $\mathcal{S}=\{S_0,...,S_N\}$ of $\Lambda$, we say that it has the intersection property if for every $i,j\in\{0,...,N\}$, $x\in\Lambda\cap S_i$, $y\in\Lambda\cap S_j$ and $n\in\N$ with $f^n(x)=y$ we have

\begin{itemize}

\item $V=[f^{-n}(S_j)\cap S_i]_x$ is a vertical rectangle;

\item $H=[f^n(S_i)\cap S_j]_y$ is a horizontal rectangle.

\end{itemize}

\begin{obs}\label{verrhor}

Let $\mathcal{S}$ be an $us$-partition with the intersection property and $i,j\in\{0,...,N\}$. Further assume that there exists $x\in S_i$ and $y\in S_j$ such that $f^n(x)=y$ for some $n\in\N$. Then the vertical rectangle $V=[f^{-n}(S_j)\cap S_i]_x$ and the horizontal rectangle $H=[f^{n}(S_i)\cap S_j]_y$ verify $f^n(V)=H$.

\end{obs}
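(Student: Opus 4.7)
The plan is to establish $f^n(V)=H$ by proving both inclusions using only that $f^n$ is a homeomorphism (hence maps connected sets to connected sets) together with the definition of connected component. The intersection property is used only to guarantee that $V$ and $H$ are well-defined vertical and horizontal rectangles in the first place, not for the equality itself.

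First I would show the inclusion $f^n(V)\subset H$. Since $V=[f^{-n}(S_j)\cap S_i]_x\subset f^{-n}(S_j)\cap S_i$, applying $f^n$ gives
\[
f^n(V)\subset f^n\bigl(f^{-n}(S_j)\cap S_i\bigr)=S_j\cap f^n(S_i).
\]
Because $f^n$ is a homeomorphism, $f^n(V)$ is connected, and clearly $f^n(x)=y\in f^n(V)$. By the definition of $H$ as the connected component of $f^n(S_i)\cap S_j$ containing $y$, we conclude $f^n(V)\subset H$.

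For the reverse inclusion $H\subset f^n(V)$, I would argue symmetrically: $H=[f^n(S_i)\cap S_j]_y\subset f^n(S_i)\cap S_j$, so
\[
f^{-n}(H)\subset S_i\cap f^{-n}(S_j).
\]
Again $f^{-n}(H)$ is connected and contains $f^{-n}(y)=x$, so by definition of $V$ we get $f^{-n}(H)\subset V$, i.e. $H\subset f^n(V)$. Combining both inclusions yields $f^n(V)=H$.

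Honestly, there is no real obstacle here: the only thing to be careful about is not overcomplicating the argument by trying to use the $us$-box structure or the local product structure. The statement is a purely topological fact about how homeomorphisms interact with connected components of intersections, and the hypotheses of the intersection property are invoked upstream (to ensure $V$ is vertical and $H$ is horizontal, and hence well-defined objects). The proof will therefore be just a few lines.
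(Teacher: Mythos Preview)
Your argument is correct: the equality $f^n(V)=H$ follows from the two symmetric inclusions you prove, each of which uses only that $f^n$ is a homeomorphism and the characterization of connected components. The paper itself does not supply a proof of this remark (it is stated as an observation and left to the reader), so your few-line argument is exactly the kind of verification the author has in mind.
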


We are now ready to state the theorem about Markov partitions associated to totally disconnected saddle basic pieces with the desired properties.

\begin{teo}\cite{Be}\label{partitions}

Let $\Lambda$ be a totally disconnected saddle basic piece of $f\in\homeo$. Then for every $\varepsilon>0$ there exists a partition $\mathcal{S}$ of $\Lambda$ such that:

\begin{itemize}

\item[(1)] $\mathcal{S}$ is a $us$-Markov partition for $f$ with the intersection property, and $d_{\mathcal{S}}<\varepsilon$;

\item[(2)] for every $n\in\N$, $f^n_*(\mathcal{S})$ is a Markov partition of $\Lambda$ for $f^n$.

\end{itemize}

\end{teo}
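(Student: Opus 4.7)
The plan is to adapt Bowen's construction of Markov partitions for hyperbolic basic sets (see \cite{bow}) to the totally disconnected setting. Total disconnectedness of $\Lambda$ eliminates the classical subtlety about partition boundaries: each rectangle in $\Lambda$ is clopen there, and the ambient topological squares $S_i$ can be chosen with boundaries disjoint from $\Lambda$. Conditions (1) and (2) then follow by verifying the Markov and intersection properties directly from the construction.

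First I would fix $0<\delta<\min(\varepsilon,\delta_0)$ with $\delta_0$ coming from the local product structure, and choose a finite set of sample points $x_1,\ldots,x_k\in\Lambda$ such that the sets
$$R_i=\{[y,z]:y\in W^u_\delta(x_i)\cap\Lambda,\ z\in W^s_\delta(x_i)\cap\Lambda\},$$
with $[y,z]$ the unique point of $W^s_{\varepsilon_0}(y)\cap W^u_{\varepsilon_0}(z)$ (which lies in $\Lambda$ by local product structure), cover $\Lambda$. Applying Bowen's refinement procedure --- splitting each proto-rectangle $R_i$ along its intersections with preimages and images of the others --- I obtain a finite family $R_0',\ldots,R_M'$ of pairwise disjoint clopen rectangles in $\Lambda$ whose union is $\Lambda$ and which satisfies the Markov condition: whenever $f(R_i')\cap R_j'\neq\emptyset$, this intersection is a full $u$-subrectangle of $R_j'$ and $R_i'\cap f^{-1}(R_j')$ is a full $s$-subrectangle of $R_i'$.

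Second, I would thicken each $R_i'$ to an ambient $us$-box $S_i$ homeomorphic to $[0,1]^2$ whose corners lie in $\Lambda$: the horizontal fibres of $S_i$ are short arcs of local unstable manifolds through points of $R_i'$, the vertical fibres are short arcs of local stable manifolds, and the four corners are selected points of $R_i'$. Because $\Lambda$ is totally disconnected and hence nowhere dense in $\T^2$, the boundary of each $S_i$ can be pushed off $\Lambda$ by a small isotopy along the stable/unstable directions; this lets the family $\mathcal{S}=\{S_0,\ldots,S_M\}$ be made pairwise disjoint while still covering $\Lambda$. The estimate $d_{\mathcal{S}}<\varepsilon$ and the $us$-box structure demanded in (1) are built in, and the intersection property for $n=1$ translates directly from the Markov property of the $R_i'$ via the stable/unstable saturation of the $S_i$.

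For condition (2), I would iterate: given the intersection property at step one, the set $f^n(S_i)\cap S_j$ meets $\Lambda$ only along an iterated intersection which decomposes into horizontal sub-rectangles $[f^n(S_i)\cap S_j]_y$ parametrised by admissible length-$n$ $\mathcal{S}$-itineraries, and the $f^n$-preimage of each such component in $S_i$ is a vertical rectangle --- exactly the data needed for $f^n_*(\mathcal{S})$ to be a Markov partition for $f^n$. The main technical obstacle is the thickening step, where one must simultaneously (a) keep the ambient squares $S_i$ pairwise disjoint, (b) keep their boundaries off $\Lambda$ so that horizontal/vertical fibres through points of $\Lambda$ never lie in $\partial S_i$, and (c) maintain coherence of the stable/unstable fibrations across different boxes when iterates $f^n(S_i)$ overlap several $S_j$. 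Total disconnectedness of $\Lambda$ is precisely what makes (a)--(c) simultaneously achievable, since the Cantor set $\Lambda$ leaves enough room in the surface to thread the boundaries of the squares without intersecting $\Lambda$.
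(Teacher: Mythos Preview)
The paper does not actually prove this theorem: it is quoted from \cite{Be}, with the sole remark that point~(2) ``is not explicit in the reference as a property for the partitions, but it is a direct consequence of their construction (and in general a well-known fact).'' So there is no in-paper argument to compare your proposal against.

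That said, your sketch is precisely the standard route --- Bowen's refinement of proto-rectangles into a Markov family, followed by a thickening into ambient squares foliated by local stable/unstable arcs --- and this is indeed the construction carried out in the cited reference of B\'eguin. Your emphasis on total disconnectedness of $\Lambda$ as the mechanism that lets one choose the ambient boxes $S_i$ pairwise disjoint with boundaries off $\Lambda$ is exactly the point that distinguishes this setting from the general surface case, and your derivation of~(2) from the intersection property by iterating the horizontal/vertical rectangle structure is the ``direct consequence'' the paper alludes to. As a proof outline this is correct and matches the source; a fully rigorous write-up would need more care in the thickening step (your items (a)--(c)), but you have identified the right obstacles and the right reason they can be overcome.
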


The point (2) in the last Theorem is not explicit in the reference as a property for the partitions, but it is a direct consequence of their construction (and in general a well-known fact). Now by making use of the remarks \ref{smallsize} and \ref{pushisrot} we have the following crucial consequence:

\begin{cor}\label{partitionscor}
 
Let $\Lambda$ be a totally disconnected saddle basic piece of $f\in\homeo$. Then there exists a partition $\mathcal{S}$ such that:

\begin{itemize}

\item[(1)] $\mathcal{S}$ is a rotational $us$-Markov partition for $f$ with the intersection property, with $d_{\mathcal{S}}$ arbitrarily small;

\item[(2)] for every $n\in\N$, $f^n_*(\mathcal{S})$ is a rotational Markov partition of $\Lambda$ for $f^n$.

\end{itemize}

\end{cor}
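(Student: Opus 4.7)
The plan is to combine Theorem \ref{partitions} with Remarks \ref{smallsize} and \ref{pushisrot} in a direct way. First, I would invoke Theorem \ref{partitions} (with any fixed $\varepsilon>0$) merely to produce some $us$-Markov partition $\mathcal{R}$ of $\Lambda$; this certifies that $\Lambda$ admits a partition in the sense required by Remark \ref{smallsize}, and Remark \ref{smallsize} then yields a threshold $\varepsilon_0>0$ such that every partition of $\Lambda$ with diameter less than $\varepsilon_0$ is automatically a rotational partition.

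Next, given a prescribed $\delta>0$ measuring how small we want $d_{\mathcal{S}}$ to be, I would apply Theorem \ref{partitions} a second time, now with diameter bound $\min(\delta,\varepsilon_0)$. This produces a $us$-Markov partition $\mathcal{S}$ of $\Lambda$ with the intersection property and with $d_{\mathcal{S}}<\min(\delta,\varepsilon_0)$. The bound $d_{\mathcal{S}}<\varepsilon_0$ together with the choice of $\varepsilon_0$ in the previous step forces $\mathcal{S}$ to be rotational, so $\mathcal{S}$ is a rotational $us$-Markov partition with the intersection property, with $d_{\mathcal{S}}$ as small as desired. This is conclusion (1).

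For (2), part (2) of Theorem \ref{partitions} already asserts that $f^n_*(\mathcal{S})$ is a Markov partition of $\Lambda$ for $f^n$ for every $n\in\N$; in particular it is a partition of $\Lambda$. Remark \ref{pushisrot} then applies verbatim and states that such a partition is automatically rotational for $f^n$. Combining these two observations gives that $f^n_*(\mathcal{S})$ is a rotational Markov partition of $\Lambda$ for $f^n$, which is conclusion (2).

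There is no genuine obstacle here since every ingredient is already in hand; the only point requiring mild care is choosing the diameter bound in the second application of Theorem \ref{partitions} small enough to simultaneously meet (i) the rotational threshold $\varepsilon_0$ coming from Remark \ref{smallsize}, and (ii) the user-specified smallness $\delta$. Once this is done, both parts of the corollary follow immediately, without any further analysis of the Markov or hyperbolic structure.
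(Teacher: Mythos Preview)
Your proposal is correct and matches the paper's approach exactly: the paper states the corollary immediately after the sentence ``Now by making use of the remarks \ref{smallsize} and \ref{pushisrot} we have the following crucial consequence,'' without further argument, and your write-up is precisely the straightforward fleshing-out of that sentence. The only extra care you take --- first invoking Theorem~\ref{partitions} once to verify that $\Lambda$ admits some partition before appealing to Remark~\ref{smallsize} --- is a reasonable precaution that the paper leaves implicit.
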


The following is an useful observation for computing the rotation set of an axiom A diffeomorphism.

\begin{obs}\label{computing}

Let $\Lambda_1,...,\Lambda_n$ be basic pieces of $f$ such that $\textrm{conv}(\rho_{\Lambda_i}(F))$ are rational polygons for $i=1,...,n$. Then, if $\Lambda=\Lambda_1\cup...\cup\Lambda_n$ we have that $\textrm{conv}(\rho_{\Lambda}(F))$ is a rational polygon. Furthermore, if $\Lambda=\Omega(f)$ then $\rho(F)$ is a rational polygon.

\end{obs}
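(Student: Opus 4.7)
The plan is straightforward and essentially just combines the additivity of the rotation set over disjoint invariant pieces with the elementary fact that the convex hull of a finite union of rational polygons is rational. There is no serious obstacle; the only point requiring a small argument is the set-theoretic decomposition of $\rho_{\Lambda}(F)$.

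First I would verify that
\[
\rho_{\Lambda}(F) \;=\; \bigcup_{i=1}^{n}\rho_{\Lambda_i}(F).
\]
The inclusion $\supset$ is immediate since each $\Lambda_i\subset\Lambda$. For $\subset$, take $v\in\rho_{\Lambda}(F)$ realized by sequences $x_k\in\pi^{-1}(\Lambda)$ and $n_k\nearrow+\infty$. Each $\pi(x_k)$ lies in one of the finitely many pairwise disjoint invariant basic pieces $\Lambda_{j_k}$, so by pigeonhole we pass to a subsequence on which $j_k$ is constant, say equal to $j$. This subsequence realizes $v$ using points of $\pi^{-1}(\Lambda_j)$, hence $v\in\rho_{\Lambda_j}(F)$. (The closedness of $\Lambda_j$ is what makes this pigeonhole work with the limit definition.)

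Next I would conclude the first statement. Writing $P_i:=\textrm{conv}(\rho_{\Lambda_i}(F))$, each $P_i$ is a rational polygon by hypothesis, so
\[
\textrm{conv}(\rho_\Lambda(F))
\;=\;\textrm{conv}\!\Big(\bigcup_{i=1}^n\rho_{\Lambda_i}(F)\Big)
\;=\;\textrm{conv}\!\Big(\bigcup_{i=1}^n P_i\Big).
\]
The extreme points of the convex hull of a finite union of compact convex sets must be extreme points of the sets being unified, so the extreme points of $\textrm{conv}(\bigcup_i P_i)$ form a subset of the finite rational set $\bigcup_i \textrm{ext}(P_i)$. Thus $\textrm{conv}(\rho_\Lambda(F))$ is a convex set with finitely many rational extreme points, i.e.\ a rational polygon.

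Finally, for the ``furthermore'' part, assume $\Lambda=\Omega(f)$. Applying property (3) from the introduction with $C=\T^2$, we obtain
\[
\textrm{conv}(\rho(F))\;=\;\textrm{conv}(\rho_{\T^2}(F))\;=\;\textrm{conv}(\rho_{\Omega(f)}(F))\;=\;\textrm{conv}(\rho_\Lambda(F)),
\]
which by the previous paragraph is a rational polygon. Since $\rho(F)$ is itself convex by Theorem \ref{convexity} of Misiurewicz and Ziemian, we have $\rho(F)=\textrm{conv}(\rho(F))$, so $\rho(F)$ is a rational polygon. The only subtlety worth double-checking is the pigeonhole step in the first paragraph, which is the sole place where the finiteness of the family and the invariance/disjointness of the pieces actually enter the argument.
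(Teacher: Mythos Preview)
Your proof is correct and follows exactly the approach the paper intends: the paper does not give a detailed argument but simply states that the observation ``is easily proved by making use of the properties of the rotation set given in Section \ref{intro},'' and your write-up spells out precisely those details (the pigeonhole decomposition over the finitely many disjoint pieces, the convex-hull computation, and the appeal to property (3) together with Theorem \ref{convexity} for the case $\Lambda=\Omega(f)$).
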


This observation is easily proved by making use of the properties of the rotation set given in Section \ref{intro}. Now we are able to construct the set $\mathcal{D}$ in the following theorem.

\begin{teo}\label{TEO1}

There exists an open and dense set $\mathcal{D}\subset \homeo$ such that for any $f\in\mathcal{D}$, the rotation set $\rho(F)$ is a rational polygon.

\end{teo}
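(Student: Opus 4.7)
The plan is to combine three tools from the excerpt: density of fitted axiom A diffeomorphisms (Theorem \ref{dense}), the fact that a rotational Markov partition forces the convex hull of the rotation set on the corresponding invariant set to be a rational polygon (Corollary \ref{polygon}), and the semi-stability of transversal axiom A systems (Corollary \ref{corneigh}). Concretely, I will first show that every fitted axiom A diffeomorphism has a rational polygon as rotation set, then exploit Corollary \ref{corneigh} to extend this property to an open neighborhood of each such map, and finally take the union of these neighborhoods to obtain $\mathcal{D}$.

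Fix $f\in\mathcal{F}$. The spectral decomposition theorem (Theorem \ref{sdt}) gives $\Omega(f)=\Lambda_1\cup\cdots\cup\Lambda_n$, a finite disjoint union of basic pieces. Each basic piece has a well-defined index; since we work on $\T^2$, this index lies in $\{0,1,2\}$. A transitive, totally disconnected, locally maximal hyperbolic attractor (resp. repeller) on a surface reduces to a single periodic orbit --- the unstable (resp. stable) bundle is trivial and transitivity forces the piece to consist of a single orbit --- so each basic piece of index $0$ or $2$ contributes one rational vector to $\rho_{\Omega(f)}(F)$. For the saddle basic pieces (index $1$), total disconnectedness of $\Omega(f)$ transfers to the piece, so Corollary \ref{partitionscor} provides a rotational $us$-Markov partition and Corollary \ref{polygon} then asserts that $\textrm{conv}(\rho_{\Lambda_i}(F))$ is a rational polygon. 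Assembling these contributions through Remark \ref{computing}, and using $\rho(F)=\textrm{conv}(\rho_{\Omega(f)}(F))$ (a consequence of Theorem \ref{convexity} and property (3) of Section \ref{intro}), I conclude that $\rho(F)$ is a rational polygon.

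Because every element of $\mathcal{F}$ satisfies the transversality condition by definition, Corollary \ref{corneigh} supplies an open neighborhood $\mathcal{U}(f)\subset\homeo$ of each $f\in\mathcal{F}$ on which the rotation set is constantly equal to $\rho(F)$. Setting
$$\mathcal{D}:=\bigcup_{f\in\mathcal{F}}\mathcal{U}(f),$$
the set $\mathcal{D}$ is open by construction and every $g\in\mathcal{D}$ has a rational polygon as its rotation set. Since $\mathcal{F}\subset\mathcal{D}$ is $C^0$-dense in $\homeo$ by Theorem \ref{dense}, $\mathcal{D}$ is dense as well.

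The substantive content of the argument has already been carried out in the preceding sections: the rotational Markov partition machinery and the resulting polygonal structure on each saddle piece. The one additional input required is the classification of totally disconnected basic pieces on $\T^2$ by index, specifically the reduction of index-$0$ and index-$2$ pieces to periodic orbits, which is a direct consequence of hyperbolicity and transitivity combined with the triviality of one of the invariant bundles. This is where the proof has its only genuine step beyond citing earlier results; everything else reduces to bookkeeping via Remark \ref{computing} and to extracting the semi-stability neighborhood from Corollary \ref{corneigh}.
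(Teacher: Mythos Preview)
Your proposal is correct and follows essentially the same route as the paper: show that each fitted axiom A map has a rational-polygon rotation set via the rotational Markov partitions on its basic pieces (Corollary~\ref{partitionscor} and Corollary~\ref{polygon}), assemble via Remark~\ref{computing}, and then take the union of the semi-stability neighborhoods from Corollary~\ref{corneigh}. The only difference is expository: you explicitly separate the index-$0$ and index-$2$ pieces and argue they are periodic orbits, whereas the paper simply applies the Markov-partition machinery to every piece without comment; since a finite periodic orbit trivially admits a rotational Markov partition (and contributes a single rational point), your extra care is harmless and arguably more precise.
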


\begin{proof}

Let us consider $\mathcal{F}_0=\{f\in \homeo:f\mbox{ is a fitted axiom A diffeomorphism}\}$. Given an element $f\in\mathcal{F}$, Theorem \ref{partitionscor} implies that we can associate a rotational Markov partition to each basic piece $\Lambda$ of $f$. Therefore, Corollary \ref{polygon} implies that $\textrm{conv}(\rho_{\Lambda}(F))$ is a rational polygon for each basic piece $\Lambda$. Hence by the last observation, $\rho(F)$ is a rational polygon for every $f\in\mathcal{F}_0$.

\ Now, if for every $f\in\mathcal{F}_0$ we consider the neighborhood $\mathcal{U}(f)$ given by the corollary \ref{corneigh}, then we have that $\mathcal{D}=\bigcup_{f\in\mathcal{F}_0}\mathcal{U}(f)$ is an open and dense set in $\homeo$, and for every $g\in\mathcal{D}$ the set $\rho(G)$ is a rational polygon for any lift $G$ of $g$.

\end{proof}

\end{section}

\begin{section}{Extension to any axiom A diffeomorphism.}\label{Axasec}

The aim of this section is prove that the rotation set of any axiom A diffeomorphism in $\homeo$ is a rational polygon. From the last section, we know that this is true for any fitted axiom A diffeomorphism in $\homeo$. Thus, one first may ask about the existence or not of axiom A diffeomorphisms which are not fitted in $\homeo$. However, is not difficult to construct, by using the Plykin attractor, an axiom A diffeomorphism $f\in\homeo$ with a basic piece $\Lambda$ that is non-totally disconnected (see \cite{Katok}). Further, one can manage the construction in a way that $\Lambda$ is not contained in a fordward invariant topological disk, so we dont have $\rho_{\Lambda}(F)=\{v\}$ with $v$ rational.

\ In what follows we introduce some preliminary notions and remarks, which are folklore of hyperbolic dynamics, see for instance \cite{Katok}. We have that every hyperbolic sets is expansive, that is: given a hyperbolic set $\Lambda$ there exists a positive constant $\gamma$ (constant of expansiveness) such that for any two points $x,y\in\Lambda$ there exist $n\in\Z$ for which $d(f^n(x),f^n(y))>\gamma$.

Let $M$ be a Riemannian manifold, $f:M\rightarrow M$ a diffeomorphism and $\{x_n\}_{n\in\Z}\subset M$. We say that $\{x_n\}_{n\in\Z}$ is a $\delta$-pseudo orbit if $d(f(x_{n}),x_{n+1})<\delta$ for every $n\in\Z$. It is said to be $\varepsilon$-shadowed by the orbit of $y\in M$ if $d(f^n(y),x_n)<\varepsilon$ for every $n\in\Z$.

\begin{teo}{Shadowing Lemma}\label{shadow}

Let $M$ be a Riemannian manifold, $f:M\rightarrow M$ a diffeomorphism, $\Lambda\subset M$ a basic piece of $f$ and $\gamma>0$ a constant of expansiveness in $\Lambda$. Then, there exists a neighborhood $U(\Lambda)$ of $\Lambda$ such that for every positive number $\varepsilon<\frac{\gamma}{2}$, there exists $\delta>0$ for which every $\delta$-pseudo orbit is $\varepsilon$-shadowed by a unique orbit in $\Lambda$.

\end{teo}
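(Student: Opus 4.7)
The plan is to reduce the shadowing statement to the local product structure of the basic piece $\Lambda$, which the excerpt has already recalled (local maximality is equivalent to the existence of a bracket $[x,y]=W^u_{\varepsilon_0}(x,f)\cap W^s_{\varepsilon_0}(y,f)\in\Lambda$ for points in $\Lambda$ at distance less than $\delta_0$). I would first fix constants in the order: a constant $\varepsilon_0>0$ for the local (Un)Stable Manifold Theorem, a $\delta_0>0$ for the local product structure, and contraction constants $\lambda\in(0,1)$, $C>0$ from hyperbolicity. Then, given $\varepsilon<\gamma/2$ small enough to also be smaller than $\varepsilon_0$ and $\delta_0/3$, I would choose a neighborhood $U(\Lambda)$ and a $\delta>0$ so that every $\delta$-pseudo-orbit $\{x_n\}\subset U(\Lambda)$ has a well-defined projection to $\Lambda$ and the brackets built below are defined at every step.

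The construction of the shadow itself I would carry out by a fixed-point argument. Working in the complete metric space $Y$ of sequences $\{y_n\}\subset\Lambda$ with $\sup_n d(y_n,x_n)<\varepsilon$, define an operator $T$ by
\begin{equation*}
(T\mathbf{y})_n \;=\; [\,f(y_{n-1}),\,W^s_{\varepsilon_0}(y_{n+1},f)\,]\,,
\end{equation*}
i.e.\ the bracket that records the unstable direction coming from the past and the stable direction coming from the future. By the hyperbolic splitting and the smallness of $\delta$, $T$ maps $Y$ into itself and is a contraction (the stable component is contracted by $f$, the unstable by $f^{-1}$, so the mixed operator contracts both); its unique fixed point $\{y_n\}$ satisfies $f(y_n)=y_{n+1}$, so it is the orbit of $y=y_0$. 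Alternatively (and equivalently in spirit), one can first build shadows of finite blocks $x_{-N},\dots,x_N$ by iteratively applying the bracket, observe they stay in the compact set $\Lambda$, and extract a convergent subsequence as $N\to\infty$; the fact that $\Lambda$ is closed guarantees the limit lies in $\Lambda$.

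Uniqueness is the easy end of the argument and where the hypothesis $\varepsilon<\gamma/2$ is used. If $y,y'\in M$ both $\varepsilon$-shadow $\{x_n\}$, then
\begin{equation*}
d(f^n(y),f^n(y'))\leq d(f^n(y),x_n)+d(x_n,f^n(y'))<2\varepsilon<\gamma
\end{equation*}
for every $n\in\Z$, contradicting the choice of $\gamma$ as an expansiveness constant unless $y=y'$.

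The main obstacle I expect is verifying the contraction estimate for $T$ rigorously: the bracket is only defined through the implicit function theorem applied to the transverse intersection of local stable and unstable manifolds, so one has to control not just the hyperbolic rates along $E^s\oplus E^u$ but also the angle between these distributions and the size of the charts, choosing $\delta$ accordingly. Once that Lipschitz estimate on $T$ is in hand, everything else (invariance of $Y$, the fact that the fixed point is genuinely an orbit in $\Lambda$ shadowing $\{x_n\}$, and uniqueness) is essentially formal.
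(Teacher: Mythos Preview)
The paper does not prove this statement at all: the Shadowing Lemma is listed among the ``preliminary notions and remarks, which are folklore of hyperbolic dynamics'' and is simply quoted with a reference to \cite{Katok}. So there is no proof in the paper to compare your proposal against; the theorem is used as a black box in the proof of Theorem~\ref{thm1}.

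That said, your sketch is in the spirit of one of the standard proofs, but the operator $T$ as you wrote it is not well defined. For the bracket $[f(y_{n-1}),\,\cdot\,]$ to make sense you need the two arguments to be within $\delta_0$ of each other, yet $f(y_{n-1})$ is close to $x_n$ while $y_{n+1}$ is close to $x_{n+1}$, and nothing forces $x_n$ and $x_{n+1}$ to be close. The usual contraction-mapping proof brackets $f(y_{n-1})$ with a point near $x_n$ (for instance with $y_n$ itself, or with a fixed projection $p_n\in\Lambda$ of $x_n$), not with $y_{n+1}$; alternatively one works on the Banach space of bounded sections of $T_\Lambda M$ and inverts $\mathrm{Id}-Df$ using the hyperbolic splitting. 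Your uniqueness argument via expansiveness is correct and is exactly how $\varepsilon<\gamma/2$ enters.
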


We say that a basic piece $\Lambda$ is an attractor if there exists an open set $U$ such that $f(cl[U])\subset  U$ and $\Lambda=\bigcap_{n\in\N}f^{n}(U)$, and it is a repeller if it is an attractor for $f^{-1}$. We say that it is a non-trivial attractor (repeller) when it is not a periodic orbit.

\begin{obs}\label{wucont}

If $\Lambda$ is a non-trivial attractor for a diffeomorphism $f$, $\Lambda$ has index one and $W^{u}(x,f)\subset \Lambda$ for every $x\in\Lambda$. The analogous result holds for repellers.

\end{obs}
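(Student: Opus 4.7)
The claim splits into two assertions: (i) $W^u(x,f)\subset\Lambda$ for every $x\in\Lambda$, and (ii) the index of $\Lambda$ equals one; the repeller case then follows by passing to $f^{-1}$.

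For (i), the plan is to exploit the trapping neighborhood $U$ given by the definition of an attractor, which satisfies $f(\textrm{cl}[U])\subset U$ and $\Lambda=\bigcap_{n\geq 0}f^{n}(U)$. The nesting makes $\{f^n(U)\}_{n\geq 0}$ a decreasing sequence, and in particular $\Lambda\subset U$. By compactness of $\Lambda$ and openness of $U$, there is some $\delta>0$ such that the $\delta$-neighborhood of $\Lambda$ is contained in $U$. Given $y\in W^u(x,f)$, the backward orbits of $x$ and $y$ approach each other, so for some $N$ and all $n\geq N$ one has $f^{-n}(y)\in U$ because $f^{-n}(x)\in\Lambda$. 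Applying $f^n$ yields $y\in f^n(U)$ for every $n\geq N$, and the decreasing property of $\{f^n(U)\}$ then gives $y\in\bigcap_{n\geq 0}f^n(U)=\Lambda$.

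For (ii), in $\T^2$ the index of $\Lambda$ is necessarily $0$, $1$ or $2$. Index $0$ would yield uniform contraction of $Df$ on $\Lambda$; combined with transitivity (inherited from $\Lambda$ being a basic piece), this collapses $\Lambda$ to a single attracting periodic orbit, contradicting non-triviality. Index $2$ would make every $W^u(x,f)$ a two-dimensional immersed submanifold of $\T^2$, hence open; by (i) it sits inside $\Lambda$, so $\Lambda$ would be simultaneously open and closed in the connected space $\T^2$, forcing $\Lambda=\T^2$. This last situation is the Anosov-on-$\T^2$ case, outside the proper-attractor setting used in this section. Only index $1$ survives. The repeller statement is obtained by applying the attractor argument to $f^{-1}$, interchanging the roles of stable and unstable sets and using $\dim E^u_f+\dim E^s_f=2$.

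The only real subtlety is the treatment of non-triviality within the index dichotomy, since an Anosov diffeomorphism's unique basic piece $\T^2$ formally satisfies the stated attractor definition; this is the standard folklore caveat implicit in the remark, to be read under the assumption that the attractor is a proper subset of the ambient manifold.
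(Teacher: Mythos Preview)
Your argument is correct and is exactly the standard folklore reasoning; the paper itself does not supply a proof for this remark, merely flagging it (together with the neighboring remarks) as ``folklore of hyperbolic dynamics'' with a reference to \cite{Katok}. Part (i) via the trapping region is the usual proof, and your trichotomy for (ii) is fine.

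One small sharpening of your index-$2$ discussion: once you reach $\Lambda=\T^2$ with $\dim E^u=2$, you are not in the Anosov case (Anosov on $\T^2$ has index $1$) but rather at a uniformly expanding diffeomorphism of a compact manifold, which is impossible outright (the inverse would be a strict contraction of $\T^2$ onto itself). So the ``proper attractor'' caveat you flag is not actually needed to close the argument; alternatively, in the paper's setting $f\in\homeo$, the very next remark in the paper excludes $\Lambda=\T^2$ because a homeomorphism homotopic to the identity cannot be Anosov.
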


\begin{obs}
 
Let $\Lambda$ be a non-trivial attractor (repeller) for $f\in\homeo$. Then $\Lambda\neq \T^2$.

In fact, if $\Lambda=\T^2$ it is known that $f$ is conjugate to an Anosov diffeomorphism (see \cite{Katok}), which is not possible since $f$ is homotopic to the identity.

\end{obs}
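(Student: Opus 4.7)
The plan is to argue by contradiction, reducing the claim to the Franks--Manning classification of Anosov diffeomorphisms on $\T^2$. Assume for contradiction that $\Lambda = \T^2$. Since $\Lambda$ is a basic piece, the hyperbolic splitting $T\T^2 = E^s \oplus E^u$ is defined on the entire torus, which is precisely the condition for $f$ to be an Anosov diffeomorphism. By Remark \ref{wucont}, $\Lambda$ has index one, so $\dim E^u = \dim E^s = 1$, consistent with the codimension-one Anosov picture on a surface.

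Next, I would invoke the Franks--Manning classification: every Anosov diffeomorphism of $\T^2$ is topologically conjugate to a hyperbolic linear automorphism $L_A$ induced by some $A \in SL(2,\Z)$ with $|\mathrm{tr}(A)| > 2$. Let $h:\T^2 \to \T^2$ be a homeomorphism realizing this conjugacy, so that $h \circ f = L_A \circ h$. Passing to the induced action on $H_1(\T^2,\Z) \cong \Z^2$, we obtain $h_* \circ f_* = A \circ h_*$ with $h_* \in GL(2,\Z)$, and hence $A = h_* \circ f_* \circ h_*^{-1}$.

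Now $f \in \homeo$ is homotopic to the identity, so $f_*$ is the identity automorphism of $\Z^2$. It follows that $A = h_* \circ \mathrm{id} \circ h_*^{-1} = \mathrm{id}$, contradicting the hyperbolicity $|\mathrm{tr}(A)| > 2$. Therefore $\Lambda$ cannot equal $\T^2$.

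The only nontrivial ingredient beyond the definitions is the Franks--Manning theorem, which I would cite from \cite{Katok}; this will be the main obstacle in the sense of being a genuinely imported black box. An alternative would be to exploit directly the existence of a global one-dimensional unstable foliation on $\T^2$ invariant under a map homotopic to the identity, and to derive a contradiction from foliation-theoretic or rotation-theoretic arguments, but the linearization route above is substantially cleaner.
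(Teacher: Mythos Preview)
Your argument is correct and follows exactly the route the paper indicates: assume $\Lambda=\T^2$, observe that $f$ is then Anosov, invoke the classification (conjugacy to a hyperbolic toral automorphism) from \cite{Katok}, and derive a contradiction with $f$ being homotopic to the identity via the action on $H_1$. The paper states this in one sentence without spelling out the homology step, but your expanded version is precisely what is intended.
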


\begin{obs}\label{size}

Let $\Lambda\neq \T^2$ be a non-trivial attractor with a fixed point $P_0$ of $f\in\homeo$. Then, there exists $r'>0$ such that for every positive number $r<r'$ we have $[B(P_0,r)\cap \Lambda]_{P_0}=W^u_r(P_0,f)$. An analogous result holds for repellers.

Otherwise, due to the local product structure one can observe that a stable arc is contained in $\Lambda$. This, due to Remark \ref{wucont} and the local product structure, would imply that $\Lambda$ is an open set. Hence $\Lambda$ is closed and open so $\Lambda=\T^2$ which is absurd (see \cite{Katok} for the rigorous arguments).

\end{obs}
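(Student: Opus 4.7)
The plan is to verify both inclusions of the claimed equality, with the reverse one requiring the contradiction argument sketched by the author.

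For the easy direction $W^u_r(P_0,f)\subseteq [B(P_0,r)\cap\Lambda]_{P_0}$: by the Stable Manifold Theorem, $W^u_r(P_0,f)$ is a $C^r$-embedded arc through $P_0$; by Remark \ref{wucont} it is contained in $\Lambda$; and by definition it is contained in $B(P_0,r)$. Being connected and passing through $P_0$, it sits inside the connected component of $P_0$ in $B(P_0,r)\cap\Lambda$.

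For the nontrivial direction, I would argue by contradiction. Suppose no $r'$ works; then one can select $r_n\searrow 0$ and points $y_n\in[B(P_0,r_n)\cap\Lambda]_{P_0}\setminus W^u_{r_n}(P_0,f)$. The decisive step is to convert this into the existence of a non-trivial stable arc inside $\Lambda$. For that, choose a coordinate box $B$ around $P_0$ of diameter smaller than both the local product constant $\delta_0$ and the size $\varepsilon_0$ from the Stable Manifold Theorem, and use the bracket map $[a,b]:=W^u_{\varepsilon_0}(a,f)\cap W^s_{\varepsilon_0}(b,f)$ to identify $\Lambda\cap B$ homeomorphically with the product $(W^s_{\varepsilon_0}(P_0,f)\cap\Lambda\cap B)\times (W^u_{\varepsilon_0}(P_0,f)\cap\Lambda\cap B)$. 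By Remark \ref{wucont} the second factor is the full arc $W^u_{\varepsilon_0}(P_0,f)\cap B$. Hence the connected component of $P_0$ in $\Lambda\cap B$ is a product $C_s\times (W^u_{\varepsilon_0}(P_0,f)\cap B)$, where $C_s$ is the component of $P_0$ in $W^s_{\varepsilon_0}(P_0,f)\cap\Lambda\cap B$. The failure of the claim forces $C_s$ to be a non-trivial arc $\gamma\subset W^s(P_0,f)\cap\Lambda$.

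With $\gamma\subset\Lambda$ in hand, Remark \ref{wucont} yields $W^u_{\varepsilon_0}(x,f)\subset\Lambda$ for every $x\in\gamma$, so the union $U:=\bigcup_{x\in\gamma}W^u_{\varepsilon_0}(x,f)$ is contained in $\Lambda$. By continuous dependence of local unstable manifolds on the base point and their transversality to $\gamma$, $U$ is open in $\T^2$, so $\Lambda$ has non-empty interior. To upgrade this to openness of $\Lambda$, I would invoke transitivity of the basic piece: pick $x_0\in\Lambda$ whose forward orbit is dense in $\Lambda$, find an iterate $f^{n_0}(x_0)\in U$, and conclude $x_0\in f^{-n_0}(U)\subset\Lambda$, an open neighborhood of $x_0$. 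Density of the orbit together with openness of the interior of $\Lambda$ then shows every point of $\Lambda$ is interior, i.e.\ $\Lambda$ is open. Since $\Lambda$ is also closed and non-empty in the connected space $\T^2$, we get $\Lambda=\T^2$, contradicting the previous Remark. The statement for repellers follows by applying the same argument to $f^{-1}$.

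The main obstacle is the bracket-map step: one must justify, using the local product structure, that the connected component of $P_0$ in $\Lambda\cap B$ actually splits as a product, so that failure of the equality with $W^u_r(P_0,f)$ translates into the existence of a non-trivial stable arc in $\Lambda$ rather than merely a sequence of stable-side points accumulating at $P_0$. Once this is done, the remaining pieces (openness of $U$, and promoting interior to open via transitivity) are standard for basic pieces.
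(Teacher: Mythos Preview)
Your proposal is correct and follows essentially the same route as the paper's sketch: assume failure, use the local product structure to produce a non-trivial stable arc inside $\Lambda$, combine with Remark~\ref{wucont} to obtain interior points, and conclude $\Lambda=\T^2$ by a clopen argument. The only minor difference is that you promote ``non-empty interior'' to ``open'' via transitivity of the basic piece, whereas the paper's one-line sketch appeals again to Remark~\ref{wucont} and the local product structure (implicitly iterating the stable arc and using density of the stable foliation); both fill the same gap and are standard for basic pieces.
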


\begin{obs}\label{topbasicpiece}

Let $\Lambda$ be a non-totally disconnected basic piece of a diffeomorphism $f$ in $\homeo$ which is not a fixed point. Then, using that $\Lambda\neq\T^2$, we have that either it is a non-trivial attractor or a non-trivial repeller.
Furthermore, if $\Lambda$ contains a fixed point then it is a connected set.

This is a well-known statment in hyperbolic dynamics and can be proved by similar arguments as the remark above.

\end{obs}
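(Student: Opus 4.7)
The plan is to proceed in four steps: reduce to index $1$; use non-total-disconnectedness together with the local product structure to find a non-trivial arc of an (un)stable manifold inside $\Lambda$; propagate this arc to deduce that the full (un)stable manifold of every point of $\Lambda$ lies in $\Lambda$; and finally read off the attractor/repeller property together with the connectedness assertion.

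A transitive, locally maximal hyperbolic set on a surface with index $0$ or $2$ must reduce to a single periodic orbit, which is totally disconnected; the hypothesis therefore forces $\Lambda$ to have one-dimensional stable and unstable bundles. To find the arc, I fix a connected set $C\subset\Lambda$ of diameter less than $\delta_0$ containing more than one point and pick $x_0\in C$. The local product structure identifies a neighborhood of $x_0$ in $\Lambda$ with $(W^u_{\varepsilon_0}(x_0,f)\cap\Lambda)\times(W^s_{\varepsilon_0}(x_0,f)\cap\Lambda)$; the non-degenerate connected image of $C$ must project non-trivially to at least one factor, and a non-trivial compact connected subset of a $1$-dimensional arc is itself a sub-arc. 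Without loss of generality, $W^u_{\varepsilon_0}(x_0,f)\cap\Lambda$ contains a non-trivial arc $\alpha$.

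For the propagation, invariance of $\Lambda$ gives $f^n(\alpha)\subset\Lambda$ for all $n$. By the inclination lemma, density of periodic points in the mixing components of $\Lambda$ from Theorem \ref{sdt}, and the cyclic permutation of these pieces, $\textrm{cl}[\bigcup_n f^n(\alpha)]\supset W^u(p,f)$ for every periodic point $p\in\Lambda$; combining with density of these unstable manifolds in each mixing piece together with continuity of the local unstable foliation, one upgrades to $W^u_{\varepsilon_0}(z,f)\subset\Lambda$, and hence $W^u(z,f)\subset\Lambda$, for every $z\in\Lambda$. This is the standard characterization of $\Lambda$ as an attractor, and a trapping neighborhood $U$ with $f(\textrm{cl}[U])\subset U$ and $\bigcap_n f^n(U)=\Lambda$ is constructed from a thin tube around $\Lambda$ transverse to the stable direction. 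The symmetric case with the arc inside a stable manifold gives $\Lambda$ as a repeller.

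For the \emph{furthermore} part, suppose $\Lambda$ contains a fixed point $P_0$ and is, say, an attractor. In the decomposition $\Lambda=\Lambda_{i_0}\cup\dots\cup\Lambda_{i_{k-1}}$ of Theorem \ref{sdt}, $P_0$ lies in a unique piece $\Lambda_{i_{j_0}}$, and $f(P_0)=P_0$ together with $f(\Lambda_{i_{j_0}})=\Lambda_{i_{j_0+1\bmod k}}$ and pairwise disjointness forces $k=1$; hence $\Lambda=\textrm{cl}[W^u(P_0,f)]$, the closure of a connected $C^r$ immersed $1$-manifold through $P_0$, and is therefore connected. The step I expect to be the main obstacle is the propagation: carefully upgrading a single arc inside one local unstable manifold to the full unstable manifold through every point of $\Lambda$, by combining the inclination lemma, density of periodic points, continuity of the local unstable foliation, and the cyclic mixing structure of the spectral decomposition.
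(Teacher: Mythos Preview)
The paper does not give a proof of this remark: it simply calls the statement well-known and says it ``can be proved by similar arguments as the remark above'' (Remark~\ref{size}), whose sketch consists of using the local product structure to locate a stable/unstable arc inside $\Lambda$ and then arguing that having arcs in both directions would force $\Lambda$ to be open and hence equal to $\T^2$. Your proposal is precisely a fleshed-out version of that same idea: you use the local product structure to project a non-trivial connected piece of $\Lambda$ onto a sub-arc of a local (un)stable manifold, propagate that arc to obtain $W^u(z,f)\subset\Lambda$ for all $z$, and read off the attractor/repeller dichotomy; your connectedness argument via $k=1$ in the spectral decomposition and $\Lambda=\textrm{cl}[W^u(P_0,f)]$ is also the standard one. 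So your approach is correct and coincides with what the paper has in mind, only with more detail supplied than the paper itself provides.
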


The following Lemma refers to the topological stability of hyperbolic fixed points of saddle type.

\begin{lema}\label{topstab}

Let $P_0$ be a hyperbolic fixed point of saddle type for $f$. Then there exists $\delta_0>0$ such that given two positives numbers $r\in(0,\delta_0)$ and $\eta>0$ there exsits $\xi>0$ with the following property: if $d_0(\tilde{f},f)<\xi$ then there is a compact and connected set $K$ that verifies

\begin{itemize}

\item $d_{\mathcal{H}}(K,W^u_r(P_0,f))<\eta$, and for some $n\in\N$ we have that $K=\tilde{f}^n(K_0)$ where
\\$K_0\subset \bigcap_{n\in\N}\tilde{f}^n(B(P_0,\eta))$;

\item there exists a $\tilde{f}$-invariant set $K'\subset B(P_0,\eta)$ such that $K'\cap K\neq \emptyset$. 

\end{itemize}

(see figure \ref{indices}).

%
%

\end{lema}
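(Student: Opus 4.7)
The plan is to work in local linearising coordinates around $P_0$ and combine a topological-degree argument for the persistence of a fixed point with the Shadowing Lemma~\ref{shadow} to control the backward-trapped set under a $C^0$-perturbation. First I would invoke the Hartman--Grobman theorem to obtain a neighbourhood $V$ of $P_0$ on which $f$ is topologically conjugate to the linear hyperbolic saddle $(x,y)\mapsto(\lambda x,\mu y)$ with $\lambda>1>\mu$, and I choose $\delta_0>0$ so that $B(P_0,3\delta_0)\subset V$. For every $\eta\in(0,\delta_0)$ the arc $W^u_\eta(P_0,f)$ is contained in $B(P_0,\eta)$, and the local hyperbolic structure ensures that the backward-trapped set $U^u(f):=\bigcap_{k\in\N}f^k(B(P_0,\eta))$ coincides with $W^u_\eta(P_0,f)$.

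Given $r\in(0,\delta_0)$ and $\eta>0$, I would select $\rho\in(0,\eta/4)$ and $n\in\N$ such that $f^n(W^u_\rho(P_0,f))=W^u_r(P_0,f)$, which is possible by expansion of $f$ along the unstable axis. For $\xi>0$ to be pinned down later, assume $d_0(\tilde{f},f)<\xi$. The Lefschetz fixed-point index of the saddle $P_0$ equals $-1$, and since this index is preserved under $C^0$ perturbations, $\tilde{f}$ must have at least one fixed point $\tilde{P}_0\in B(P_0,\eta)$. Set $K':=\{\tilde{P}_0\}$: it is $\tilde{f}$-invariant, lies in $B(P_0,\eta)$, and automatically belongs to $U^u(\tilde{f}):=\bigcap_{k\in\N}\tilde{f}^k(B(P_0,\eta))$.

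The central step is to show that, after further reducing $\xi$, the Hausdorff distance satisfies $d_\mathcal{H}(U^u(\tilde{f}),W^u_\eta(P_0,f))<\eta/4$. For one inclusion, I would pick $n_0$ so large that $\bigcap_{k=0}^{n_0}f^k(B(P_0,\eta))$ lies within the $(\eta/8)$-neighbourhood of $W^u_\eta(P_0,f)$, exploiting the exponential collapse along the stable direction; uniform continuity of $f^{n_0}$ then ensures that $\bigcap_{k=0}^{n_0}\tilde{f}^k(B(P_0,\eta))$ stays in the $(\eta/4)$-neighbourhood once $\xi$ is small enough, and hence so does $U^u(\tilde{f})$. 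For the reverse inclusion I would invoke the topological stability of the hyperbolic set $\{P_0\}$, a standard consequence of Theorem~\ref{shadow} combined with expansiveness: for $\xi$ sufficiently small, every backward $f$-orbit on $W^u_\eta(P_0,f)$ is $(\eta/4)$-shadowed by a backward $\tilde{f}$-orbit remaining in $B(P_0,\eta)$, so each point of $W^u_\eta(P_0,f)$ lies within $\eta/4$ of $U^u(\tilde{f})$. Letting $\hat K$ be the connected component of $\tilde{P}_0$ in $U^u(\tilde{f})$ and $K_0$ the connected component of $\tilde{P}_0$ in $\hat K\cap\overline{B(P_0,\rho)}$, this $K_0$ is compact, connected, contained in $U^u(\tilde{f})$, and Hausdorff-close to $W^u_\rho(P_0,f)$.

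Finally I define $K:=\tilde{f}^n(K_0)$. After one more shrinking of $\xi$ so that $d_0(\tilde{f}^n,f^n)$ is as small as needed on $B(P_0,\delta_0)$, the triangle inequality
\[
d_\mathcal{H}(K,W^u_r(P_0,f))\le d_\mathcal{H}(\tilde{f}^n(K_0),f^n(K_0))+d_\mathcal{H}(f^n(K_0),W^u_r(P_0,f))<\eta
\]
yields the desired estimate. Since $\tilde{P}_0\in K_0$, also $\tilde{P}_0=\tilde{f}^n(\tilde{P}_0)\in K$, so $K'\cap K\neq\emptyset$. The hard part is the Hausdorff control of $U^u(\tilde{f})$: with only $C^0$-closeness at our disposal there are no cone fields or Lipschitz bounds, and the argument must calibrate $n_0$, $n$ and $\xi$ simultaneously, trading the exponential collapse of $f$ along the stable direction against the iterative propagation of the $C^0$-error of $\tilde{f}$.
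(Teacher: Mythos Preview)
Your Lefschetz-index argument for producing the invariant set $K'=\{\tilde P_0\}$ is a clean alternative to the paper's construction of $B^+\cap B^-$, and the finite-iterate argument giving $U^u(\tilde f)\subset B(W^u_\eta(P_0,f),\eta/4)$ is correct. But two steps do not go through. The ``reverse inclusion'' is not a consequence of Theorem~\ref{shadow}: shadowing for the hyperbolic set $\{P_0\}$ produces $f$-orbits shadowing pseudo-orbits of $f$, not $\tilde f$-orbits shadowing $f$-orbits, and with only $C^0$-closeness there is no inverse-shadowing or structural-stability statement available in that direction.

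More seriously, even granting that $d_{\mathcal H}\bigl(U^u(\tilde f),W^u_\eta(P_0,f)\bigr)$ is small, this says nothing about the \emph{connected component} $\hat K$ of $\tilde P_0$ inside $U^u(\tilde f)$. A set can be Hausdorff-close to an arc while every component is arbitrarily small, so there is no reason $K_0$ should be Hausdorff-close to $W^u_\rho(P_0,f)$; without this your estimate on $K=\tilde f^n(K_0)$ collapses. The paper handles exactly this point by a crossing argument in a linearising box $B=[a,b]\times[c,d]$: it checks that for $\xi$ small enough, whenever a compact connected $C\subset B$ joins the two vertical sides $A_1,A_3$, some component of $\tilde g(C)\cap B$ again joins $A_1,A_3$; iterating and taking a nested intersection yields a continuum in $\bigcap_n\tilde g^n(B)$ that still joins both sides and passes through a point of the invariant set. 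This ``joining opposite sides'' mechanism is precisely what supplies the connectivity you are missing, and it cannot be replaced by a Hausdorff estimate on $U^u(\tilde f)$ alone.
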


\small

\begin{figure}[ht]\begin{center}

\psfrag{P0}{$P_0$}\psfrag{K}{$K$}\psfrag{K'}{$K'$}
\psfrag{WUr}{$W^u_r(P_0)$}\psfrag{WS}{$W^s(P_0)$}\psfrag{B}{$\phi(B)$}
\psfrag{Bola}{$B(W^u_r(P_0),\eta)$}

\includegraphics[height=7cm]{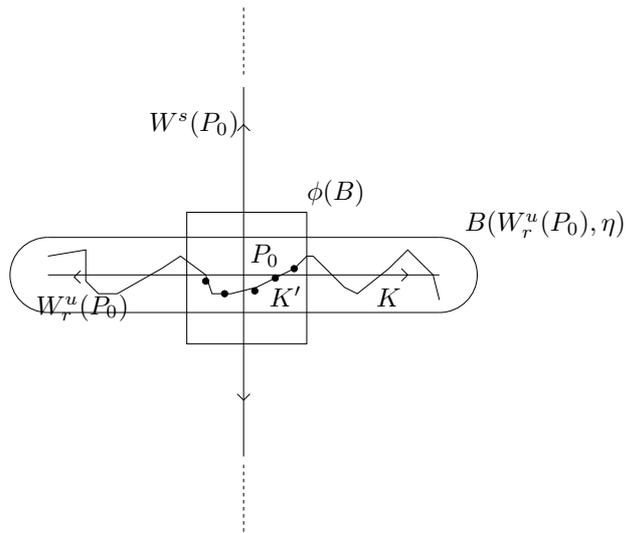}
\caption{Lemma \ref{topstab}. The set $\phi(B)$ is constructed in the proof.}\label{indices}
\end{center}\end{figure}

\normalsize

\begin{proof}

Let $\phi:\R^2\rightarrow U$ be the Hartman-Grobman coordinates with $\phi([-1,1]\times\{0\})=W^u_{\delta_0}(P_0,f)$ and $\phi(\{0\}\times [-1,1])=W^s_{\delta_0}(P_0,f)$. For positive numbers $r<\delta_0$ and $\eta$ let us fix a box $B=[a,b]\times[c,d]$, such that:

\begin{itemize}

\item[(i)] $0\in \textrm{int}(B)$, $\phi(B)\subset B(P_0,\eta)\cap U$;

\item[(ii)] $f(\phi(B))\subset U$;

\item[(iii)] there exists $n\in\N$ such that for any compact and connected set $L\subset B$ which intersect the sets $A_1=\{a\}\times [c,d]$ and $A_3=\{b\}\times [c,d]$, we have $d_{\mathcal{H}}(\phi((Df)_{P_0}^n(L)),W^u_r(P_0,f))<\eta$. 

\end{itemize}

Is not difficult to see that this box can be constructed. In what follows we say that a compact and connected set $L\subset B$ joins $A_1$ and $A_3$ if it intersects $A_1$ and $A_3$. Furhter, we use the analogous definitions for the sets $A_2=[a,b]\times \{c\}$ and $A_4=[a,b]\times\{d\}$.

Now, we consider $\xi_1>0$ such that if $d_0(\tilde{f},f)<\xi_1$ then $\tilde{g}=\phi^{-1}\circ \tilde{f}\circ \phi$ verifies:

\begin{itemize}

\item[(a)] for every compact and connected set $C\subset B$ which joins $A_1$ and $A_3$, the set $\tilde{g}(C)\cap B$ contains at least one connected component which joins $A_1$ and $A_3$;  

\item[(b)] for every compact and connected set $C\subset B$ which joins $A_2$ and $A_4$, the set $\tilde{g}^{-1}(C)\cap B$ contains at least one connected component which joins $A_2$ and $A_4$;

\end{itemize}

Therefore property (a) allows us to do the following inductive definition:

$K_0=B$, $\mathcal{F}_0=\{C:\mbox{ $C$ connected component of $\tilde{g}(K_0)\cap B$ which joins $A_1$ and $A_2$}\}$;

$K_1=\bigcup_{C\in\mathcal{F}_0}C$, $\mathcal{F}_0=\{C:\mbox{ $C$ connected component of $\tilde{g}(K_1)\cap B$ which joins $A_1$ and $A_2$}\}$;

\ and in general: 

$K_n=\bigcup_{C\in\mathcal{F}_{n-1}}C$, $\mathcal{F}_n=\{C:\mbox{ $C$ connected component of $\tilde{g}(K_{n})\cap B$ which joins $A_1$ and $A_2$}\}$;

\ We remark here that due to consider $C^0$-perturbations of $f$, we can not ensure that $K_n$ has a unique element for every $n\in\N$. Now, we define $K_I=\bigcap_{n\in\N}K_n$ which is a non-empty compact set, since it is a decreasing intersection of non-empty compact set (notice that the families $\mathcal{F}_n$ are finite). This implies that $B^+=\bigcap_{n\in\N} \tilde{g}^n(B)$ is not empty since $K_I\subset B^+$. Moreover if we pick a point $x\in K_I\cap B^+$, we have that $[K_I]_x$ joins $A_1$ and $A_3$.

In an analogous way by using the property (b), we can construct a non-empty compact set $K_{II}$ with $K_{II}\subset B^-:=\bigcap_{n\in\N}\tilde{g}^{-n}(B)$. Furthermore if we pick now $y\in K_{II}$, we have that $[K_{II}]_y$ joins $A_2$ and $A_4$. This implies that the set $\tilde{K}':= B^+\cap B^-=:\tilde{K}'$ is a non-empty compact and $\tilde{g}$-invariant set. Let $z\in\tilde{K}'$ and $\tilde{K}_0:=[K_I]_z\subset B^+$.

Due to the property (iii) in the definition of $B$, we can suppose the existence of $\xi<\xi_1$ such that if $d_0(\tilde{f},f)<\xi$ then for $K:=\tilde{f}^n(\phi(\tilde{K}_0))$ we have $d_{\mathcal{H}}(K,W^u_r(P_0,f))<\eta$. Furthermore, if we set $\phi(\tilde{K}_0)=:K_0$ we have by definition that $K=\tilde{f}^n(K_0)$, and by construction that $K_0\subset \bigcap_{n\in\N}\tilde{f}^n(B(P_0,\eta))$. Finally, if we take $K':=\phi(\tilde{K}')$ we have that $K'\subset B(P_0,\eta)$ and $K\cap K'\neq \emptyset$ since $\tilde{f}^n(\phi(z))\in K\cap K'$.

\end{proof}

The following theorem is the key result for our purpose. It relates, in a dynamical way, totally disconnected basic pieces with connected ones.

\begin{teo}\label{thm1}

Let $\Lambda$ be a non-trivial attractor of a diffeomorphism $f$ in $\homeo$, $P_0$ a fixed point in $\Lambda$, $U(\Lambda)$ as in Theorem \ref{shadow} and $U\subset U(\Lambda)$ an open and connected set such that $f(cl[U])\subset U$ and $\bigcap_{n\in\Z}f^n(U)=\Lambda$. Then, there exists $\varepsilon_0>0$ such that if $d_0(\tilde{f},f)<\varepsilon_0$ we have:

\begin{itemize}

\item[(1)] $\tilde{f}(cl[U])\subset U$ and $\tilde{\Lambda}=\bigcap_{n\in\N} \tilde{f}^n(U)$ is a non-empty compact and connected set;

\item[(2)] there exists a continuous and onto map $h:\Lambda\rightarrow \tilde{\Lambda}$ such that $d_0(h,\textrm{Id})<\frac{1}{4}$ and $h\circ \tilde{f}=f\circ h$;

\item[(3)] $h$ lifts to a continuous and onto map $H:\pi^{-1}(\tilde{\Lambda})\rightarrow \pi^{-1}(\Lambda)$ such that for any two lifts $\tilde{F},F$ of $\tilde{f},f$ we have $H\circ \tilde{F}=F\circ H+v$ for some $v\in\Z^2$, and $d_0(H,id)<\frac{1}{4}$.

\end{itemize}

\end{teo}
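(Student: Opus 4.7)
The plan is to establish the three conclusions in turn, using the Shadowing Lemma (Theorem \ref{shadow}) as the main technical ingredient, and interpreting the semi-conjugacy $h$ as being defined on $\tilde{\Lambda}$ with values in $\Lambda$ (the only way $h\circ\tilde{f}=f\circ h$ is well-defined).

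For part (1) I would pick $\varepsilon_0>0$ smaller than the distance between the compact set $f(\textrm{cl}[U])$ and the closed set $\T^2\setminus U$, which is positive by the trapping hypothesis $f(\textrm{cl}[U])\subset U$. Then $d_0(\tilde{f},f)<\varepsilon_0$ immediately forces $\tilde{f}(\textrm{cl}[U])\subset U$. The sets $\tilde{f}^n(\textrm{cl}[U])$ form a decreasing sequence of non-empty compact connected sets (each is the continuous image of $\textrm{cl}[U]$, which is compact and connected because $U$ is open and connected). Their intersection is therefore non-empty, compact and connected, and coincides with $\tilde{\Lambda}=\bigcap_{n\in\N}\tilde{f}^n(U)$ thanks to the strict inclusion.

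For part (2), fix $\varepsilon<\min\{1/4,\gamma/2\}$ where $\gamma$ is an expansiveness constant for $\Lambda$, and let $\delta>0$ be the associated shadowing constant from Theorem \ref{shadow}. Shrinking $\varepsilon_0$ so that $\varepsilon_0<\delta$, every $\tilde{x}\in\tilde{\Lambda}$ has an $\tilde{f}$-orbit lying in $U\subset U(\Lambda)$, and this orbit is a $\delta$-pseudo-$f$-orbit since $d(f(\tilde{f}^n(\tilde{x})),\tilde{f}^{n+1}(\tilde{x}))<\varepsilon_0<\delta$. I define $h(\tilde{x})$ to be the zeroth point of the unique $f$-orbit in $\Lambda$ that $\varepsilon$-shadows this pseudo-orbit. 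The identity $h\circ\tilde{f}=f\circ h$ then follows from the uniqueness of shadowing applied to the shifted pseudo-orbit, the inequality $d_0(h,\textrm{Id})<\varepsilon<1/4$ is built into the definition, and continuity of $h$ follows from uniqueness of the shadow together with expansiveness on $\Lambda$. For surjectivity, $h(\tilde{\Lambda})$ is a closed $f$-invariant subset of $\Lambda$; a Lefschetz-type fixed-point-index argument (saddle periodic points have non-zero local index and hence persist under small $C^0$-perturbations) yields, for each hyperbolic periodic orbit $\mathcal{O}$ of $f$ in $\Lambda$ of period $k$, an $\tilde{f}^k$-fixed point $\tilde{p}$ near $\mathcal{O}$; shadowing its orbit recovers exactly $\mathcal{O}$ because hyperbolic periodic orbits are isolated. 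Hence $h(\tilde{\Lambda})$ contains all periodic orbits of $f|_\Lambda$, and by axiom A together with $\Lambda=\textrm{cl}[\textrm{Per}(f|_\Lambda)]$ the closed set $h(\tilde{\Lambda})$ equals $\Lambda$.

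For part (3), using $d_0(h,\textrm{Id})<1/4<1/2$, each $\tilde{y}\in\pi^{-1}(\tilde{\Lambda})$ has a unique element of $\pi^{-1}(h(\pi(\tilde{y})))$ at distance less than $1/4$ from $\tilde{y}$; define $H(\tilde{y})$ to be this element. The map $H:\pi^{-1}(\tilde{\Lambda})\to\pi^{-1}(\Lambda)$ is continuous and $\Z^2$-equivariant by construction, satisfies $d_0(H,\textrm{id})<1/4$, and is onto because $h$ is. Given lifts $F,\tilde{F}$ of $f,\tilde{f}$, both $H\circ\tilde{F}$ and $F\circ H$ are $\Z^2$-equivariant continuous lifts of the common map $h\circ\tilde{f}=f\circ h$, so they differ by a single deck transformation, giving $H\circ\tilde{F}=F\circ H+v$ for some $v\in\Z^2$. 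The main obstacle is surjectivity of $h$ in part (2): since the perturbation is only $C^0$-small, standard hyperbolic persistence of periodic orbits does not apply, so one must invoke fixed-point-index persistence of saddle periodic orbits together with density of periodic orbits in $\Lambda$ to force $h(\tilde{\Lambda})=\Lambda$.
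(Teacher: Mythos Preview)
Your arguments for part~(1), for the existence and continuity of $h$ in part~(2) via shadowing, and for part~(3) are essentially the same as the paper's. (For the constancy of $v$ the paper argues that $v(x)=H\circ\tilde F(x)-F\circ H(x)$ descends to a continuous integer-valued function on the connected set $\tilde\Lambda$; your $\Z^2$-equivariance formulation amounts to the same thing.)

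The gap is in your surjectivity argument. Persistence of a period-$k$ orbit via the fixed-point index requires $\tilde f^{\,k}$ to be $C^0$-close to $f^k$ on an isolating neighbourhood of the orbit, and the needed closeness depends on $k$. From a single bound $d_0(\tilde f,f)<\varepsilon_0$ you have no uniform control of $d_0(\tilde f^{\,k},f^k)$ as $k\to\infty$ (remember $\tilde f$ is only a homeomorphism, so there is not even a Lipschitz estimate to iterate). Hence you cannot conclude that \emph{every} periodic orbit of $f|_\Lambda$ lies in $h(\tilde\Lambda)$ from one fixed $\varepsilon_0$, and periodic orbits of bounded period are only finitely many, so they do not give density either. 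Your index argument does yield $P_0\in h(\tilde\Lambda)$, but that alone is not enough.

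The paper obtains surjectivity by a different, purely local-at-$P_0$ route. Lemma~\ref{topstab} (a $C^0$ topological-stability statement for a single hyperbolic saddle) produces, for $\tilde f$ sufficiently $C^0$-close to $f$, a compact connected set $K\subset\tilde\Lambda$ which is Hausdorff-close to a local unstable arc $W^u_{r_1}(P_0,f)$ and which meets a small $\tilde f$-invariant set near $P_0$; the latter forces $P_0\in h(K)$. Since $d_0(h,\textrm{Id})$ is small, $h(K)$ is a connected subset of $\Lambda$ contained in a small ball around $P_0$, and Remark~\ref{size} then forces $h(K)\subset W^u_{r_0}(P_0,f)$. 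An endpoint estimate shows $h(K)$ contains a nondegenerate subarc $[c,d]\ni P_0$ of $W^u(P_0,f)$. Using the semiconjugacy, $h(\tilde\Lambda)\supset\bigcup_{n\ge 0}f^n([c,d])=W^u(P_0,f)$, which is dense in $\Lambda$ by Theorem~\ref{sdt}, so $h(\tilde\Lambda)=\Lambda$. The point is that this argument needs $\varepsilon_0$ small only relative to the geometry at the one fixed point $P_0$, avoiding any dependence on periods.
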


\begin{proof}

It is easy to see that there exists $\varepsilon'>0$ such that for all $\tilde{f}\in\homeo$ with $d_0(\tilde{f},f)<\varepsilon'$ statement (1) holds. In the following we want to achieve point (2) in the theorem in two steps. We first find $\varepsilon_1>0$ for which the existence and continuity of the map $h$ is guaranteed for any $\tilde{f}\in\homeo$ with $d_0(\tilde{f},f)<\varepsilon_1$ (i). The second step is find a positive number $\varepsilon_2\leq\varepsilon_1$, such that if $d_0(\tilde{f},f)<\varepsilon_2$ then the map $h$ is onto (ii).

\begin{itemize}

\item[i)] Let $\alpha>0$ be such that $\alpha<\textrm{min}\{\frac{\gamma}{2},\frac{1}{4}\}$, where $\gamma$ is the constant of expansiveness of $f$ in $\Lambda$, and consider $\delta_{\alpha}>0$ associated to $\alpha$ by Theorem \ref{shadow}. Consider now a positive number $\varepsilon_1<\min\{\delta_{\alpha},\varepsilon'\}$. Then for every $\tilde{f}\in\homeo$ with $d_0(\tilde{f},f)<\varepsilon_1$ we have that $\tilde{\Lambda}$ is a non-empty invariant set of $\tilde{f}$ contained in $U$ and every orbit of $\tilde{f}$ in $\tilde{\Lambda}$ is a $\delta_{\alpha}$-pseudo orbit of $f$. Hence, by Theorem \ref{shadow} to any point $x\in\tilde{\Lambda}$ we can associate a unique point $y_x\in B(x,\alpha)\cap \Lambda$ such that its orbit by $f$ $\alpha$-shadows the orbit of $x$ by $\tilde{f}$. Thus, we define $h:\tilde{\Lambda}\rightarrow \Lambda$ by $h(x)=y_x$.

\ Now we want to see that $h$ is a continuous function. Consider a sequence $\{x_n\}_{n\in\N}$ in $\tilde{\Lambda}$ converging to $x_0$, and $\{y_n=h(x_n)\}_{n\in\N}$. Suppose for a contradiction that $\{y_n\}_{n\in\N}$ does not converge to $h(x_0)=y_0$. Then we can consider a sequence $\{x_{n_k}\}_{k\in\N}$ converging to $x_0$ such that $\{y_{n_k}\}_{k\in\N}$ converges to $z_0\neq y_0$. It is not difficult to see that this implies $d(f^n(z_0),\tilde{f}^n(x_0))\leq \frac{\gamma}{2}$ for very $n\in\Z$. On the other hand by definition of $h$ $d(f^n(y_0),\tilde{f}^n(x_0))\leq \frac{\gamma}{2}$ for very $n\in\Z$. Therefore $d(f^n(z_0),f^n(y_0))\leq \gamma$ for very $n\in\Z$ which is absurd since $\gamma$ is the constant of expansiveness of $f$ in $\Lambda$. So we have for every $\tilde{f}\in\homeo$ with $d_0(\tilde{f},f)<\varepsilon_1$ a continuous map $h:\tilde{\Lambda}\rightarrow\Lambda$ such that $d_0(h,id)<\frac{1}{4}$ and $h\circ \tilde{f}=f\circ h$. Moreover, we have that $d_0(h,id)\leq d_0(\tilde{f},f)$.

\item[ii)] Now we want to find a positive number $\varepsilon_2\leq\varepsilon_1$, such that for every $\tilde{f}\in\homeo$ with $d_0(\tilde{f},f)<\varepsilon_2$ the map $h$ is onto. Consider $\delta_0>0$ given by Lemma \ref{topstab} for $P_0$. Define a positive number $r_0<\min\{\delta_0,\frac{\gamma}{2},r'\}$, where $r'$ is given by Remark \ref{size}, such that $B(P_0,r_0)\subset U$. Let $W=W^u_{r_0}(P_0,f)$, $r_1=\frac{r_0}{2}$, and $W'=W^u_{r_1}(P_0,f)$. Given a parametrization with some orientation for $W$ we can make use of standard notation for defining arcs $[c,d]$ in $W$. In particular $W$ itself is an arc $[a,b]$ such that $P_0\in(a,b)$ and $W'$ is an arc $[a',b']$ with $a<a'<P_0<b'<b$ (see figure \ref{kt1}).

\small

\begin{figure}[ht]\begin{center}

\psfrag{P0}{$P_0$}\psfrag{W'}{$W'$}
\psfrag{Wur0}{$W$}\psfrag{ws}{$W^s(P_0)$}\psfrag{a}{$a$}\psfrag{b}{$b$}\psfrag{a'}{$a'$}\psfrag{b'}{$b'$}
\psfrag{Br0}{$B(P_0,r_0)$}

\includegraphics[height=7cm]{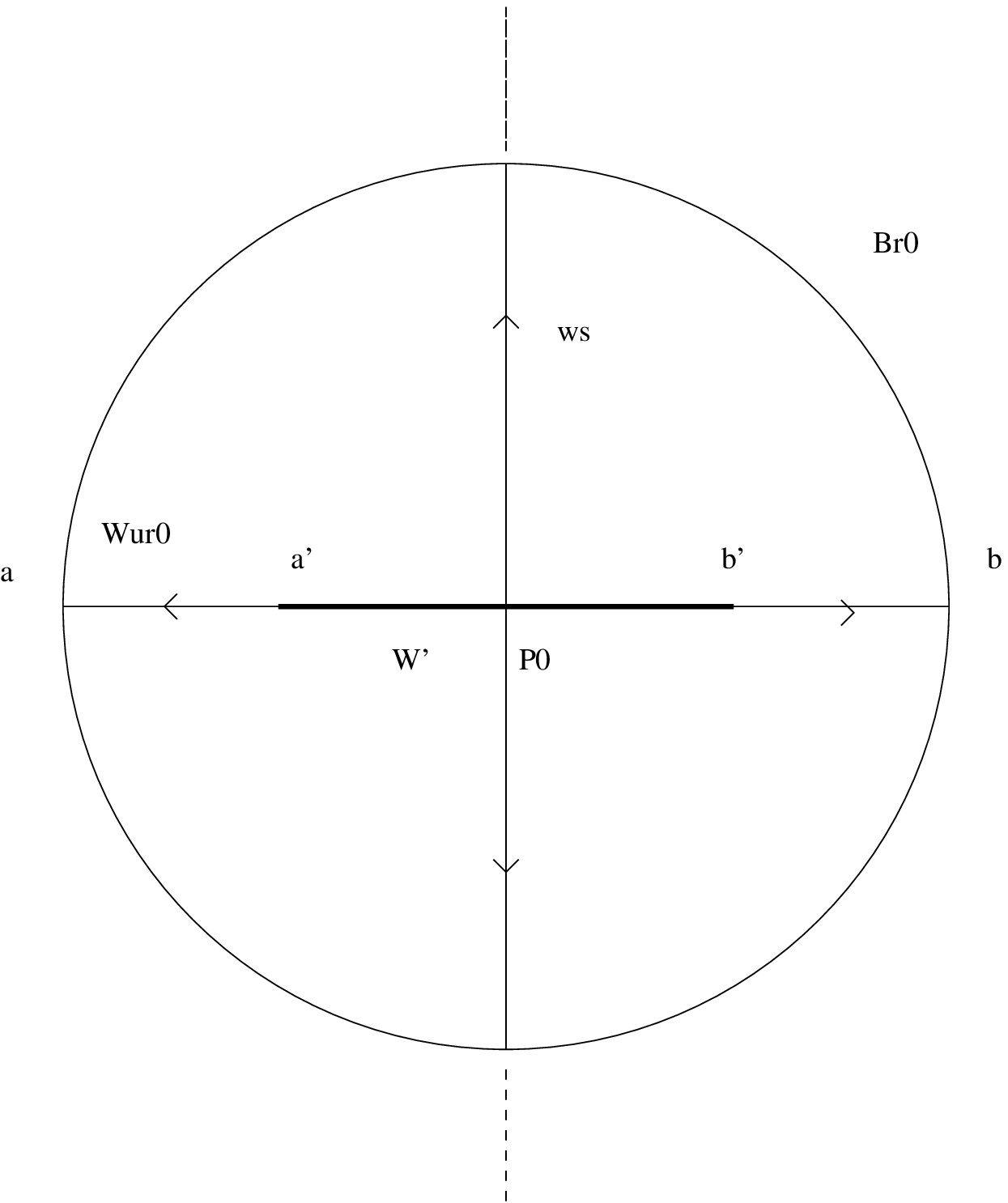}
\caption{}\label{kt1}
\end{center}\end{figure}

\normalsize

Now, let us consider $\eta>0$ such that
$$\min\{d(a,a'),d(b',b),d(a',[P_0,b]),d([a,P_0],b'),d(W',\partial B(P_0,r_0))\}>2\eta$$

and take $\xi'>0$ associated to $r_1$ and $\eta$ by Lemma \ref{topstab}. Thus, if we consider $\xi=\min\{\xi',\varepsilon_1\}$ by Lemma \ref{topstab} we have for any $\tilde{f}\in\homeo$ with $d_0(\tilde{f},f)<\xi$, the existence of a non-empty compact and connected set $K$ such that:

\begin{itemize}

\item $K$ is contained in $\tilde{\Lambda}$, since $K=\tilde{f}^n(K_0)$ with $K_0\subset \bigcap_{n\in\N}\tilde{f}^n(B(P_0,\eta))$;

\item $K$ intersects a compact and $\tilde{f}$-invariant set $K'$ with $K'\subset B(P_0\eta)$. 

\end{itemize}

(see figure \ref{kt2})

\small
\begin{figure}[ht]\begin{center}

\psfrag{P0}{$P_0$}\psfrag{W'}{$W'$}
\psfrag{Wur0}{$W$}\psfrag{ws}{$W^s(P_0)$}\psfrag{a}{$a$}\psfrag{b}{$b$}\psfrag{a'}{$a'$}\psfrag{b'}{$b'$}
\psfrag{Br0}{$B(P_0,r_0)$}\psfrag{BW'}{$B(W',\eta)$}\psfrag{K}{$K$}

\includegraphics[height=7cm]{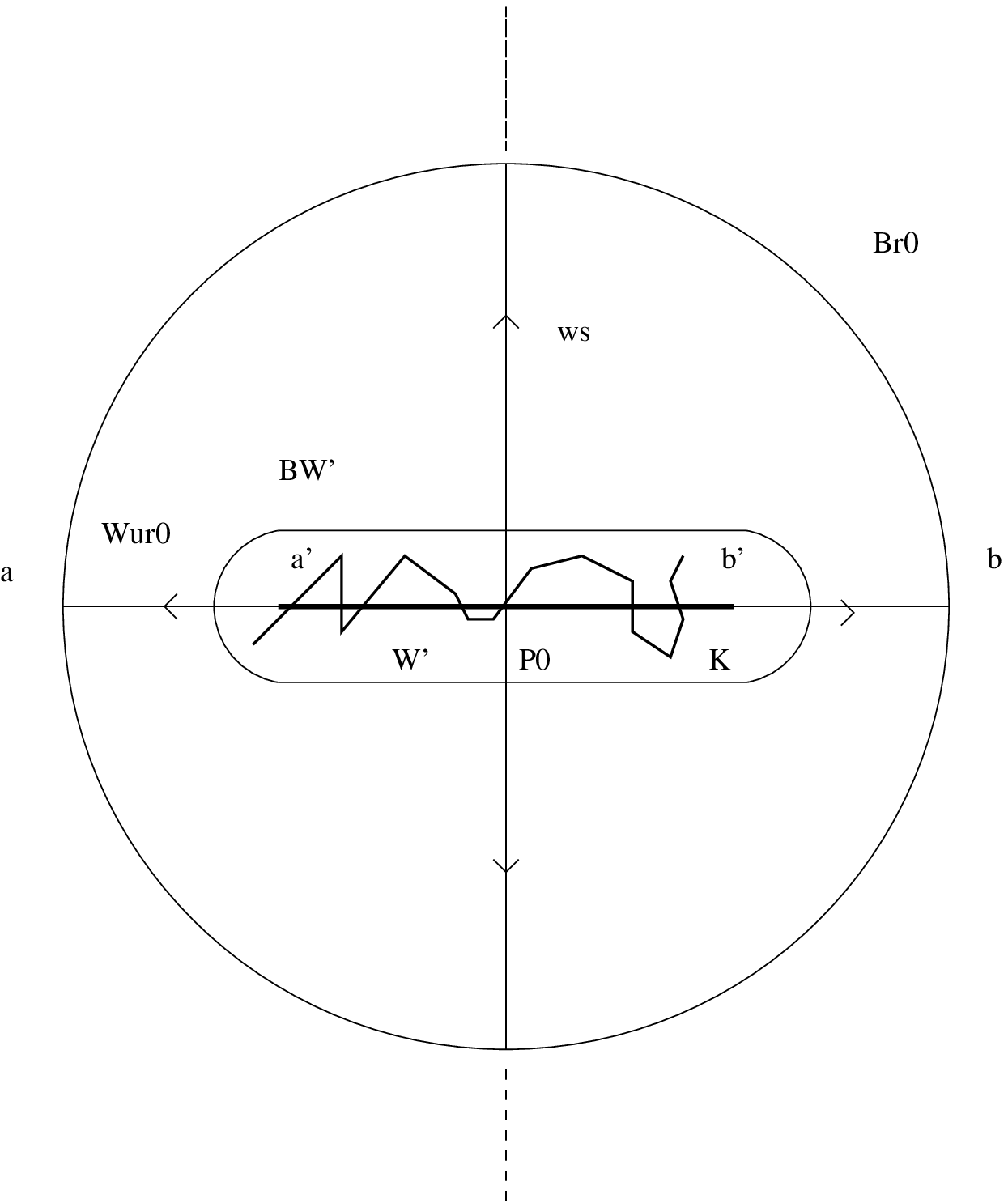}
\caption{}\label{kt2}
\end{center}\end{figure}

\normalsize

\ Due to the fact that $r_0<\frac{\gamma}{2}$ we have necessarily that $h(z)=P_0$ for every $z\in K'$, thus $h(K)$ contains $P_0$. Let $\varepsilon_2=\min\{\xi,\eta\}$. Then for every $\tilde{f}\in\homeo$ with $d_0(\tilde{f},f)<\varepsilon_2$ we have that $h(K)\subset B(W',2\eta)\subset B(P_0,r_0)$ is a compact and connected set containing $P_0$. Thus, by Remark \ref{size} $h(K)\subset W$. Furthermore if we choose $x'\in K\cap B(a',\eta)$ and $y'\in K\cap B(b',\eta)$, then for any $\tilde{f}\in\homeo$ with $d_0(\tilde{f},f)<\varepsilon_2$, we have that $d(h(x'),a')<2\eta$ and $d(h(y'),b')<2\eta$. Hence by definition of $\eta$ we have that $c:=h(x')\in [a,P_0)$ and $d:=h(y')\in (P_0,b]$. By connectedness of $K$ and continuity of $h$ we have that $[c,d]\subset h(K)$.

\ Due to the properties of the semi-conjugacy, the set $D=\bigcup_{n\in\N} f^n([c,d])$ is contained in $h(\tilde{\Lambda})$. On the other hand $D=W^u(P_0,f)$, which is dense in $\Lambda$ (see \ref{sdt}). Thus $h(\tilde{\Lambda})=\Lambda$ which means that $h$ is onto for every $\tilde{f}\in\homeo$ with $d_0(\tilde{f},f)<\varepsilon_2$.

\end{itemize}

We have already shown the existence of $\varepsilon_0=\textrm{min}\{\varepsilon',\varepsilon_1,\varepsilon_2\}$ such that for any $\tilde{f}\in\homeo$ with $d_0(\tilde{f},f)<\varepsilon_0$ (1) and (2) in the theorem are verified. Finally, we want to see that if $d_0(\tilde{f},f)<\varepsilon_0$ then the map $h$ lift to a map $H$ with the properties given in the point (3).

For this, consider a map $\tilde{f}\in\homeo$ with $d_0(\tilde{f},f)<\varepsilon_0$ and for $\tilde{x}\in\pi^{-1}(\tilde{\Lambda})$ the point $h(\pi(\tilde{x}))=y\in\Lambda$. Then since $d_0(h,Id)<\frac{1}{4}$, there is a unique point in $B(\tilde{x},\frac{1}{4})\cap \pi^{-1}(y)$ that we call $\tilde{y}_x$. We define $H:\pi^{-1}(\tilde{\Lambda})\rightarrow \pi^{-1}(\Lambda)$ by $H(\tilde{x})=\tilde{y}_x$. It is not difficult to see that this function is continuous, onto, and $d_0(H,Id)<\frac{1}{4}$. Furthermore if we consider $x\in\pi^{-1}(\tilde{\Lambda})$ and $v\in\Z^2$, then $H(x+v)=H(x)+v$. So $H$ lifts $h$.

Further, for fixed lifts $\tilde{F},F$ of $\tilde{f},f$ and any point $x\in\pi^{-1}(\tilde{\Lambda})$ we have by the definition of $H$ that $H\circ \tilde{F}(x)=F\circ H(x)+v(x)$ with $v(x)\in\Z^2$. Therefore we have that $v(x)=H\circ \tilde{F}(x)-F\circ H(x)$ defines a continuous and integer-valued function in $\T^2$ over the set $\tilde{\Lambda}$, which is compact and connected. Hence there exists $v\in\Z^2$ such that $v(x)=v$ for every $x\in\pi^{-1}(\tilde{\Lambda})$. This completes the proof.

\end{proof}

In the following we make use of the above theorem to compute the rotation set on basic pieces which are either a non-trivial attractor or a non-trivial repeller.

\begin{teo}\label{thm2}

Let $f\in\homeo$ be a diffeomorphism, $\Lambda$ be a non-trivial attractor of $f$ and $P_0$ a fixed point in $\Lambda$. Then $\textrm{conv}(\rho_{\Lambda}(F))$ is a rational polygon. An analogous result holds for repellers.

\end{teo}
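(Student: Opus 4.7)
The plan is to approximate $f$ by a fitted axiom A diffeomorphism $\tilde{f}\in\homeo$, close enough for Theorem \ref{thm1} to produce a continuous semi-conjugacy from the trapped set $\tilde{\Lambda}$ of $\tilde{f}$ onto $\Lambda$, and then transfer the rational polygon structure of the rotation set of $\tilde{f}$ on $\tilde{\Lambda}$ back to $\Lambda$ by means of this semi-conjugacy.

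First I would invoke Theorem \ref{dense} to choose $\tilde{f}\in\mathcal{F}\cap\homeo$ with $d_0(\tilde{f},f)<\varepsilon_0$, where $\varepsilon_0$ is the constant provided by Theorem \ref{thm1} applied to the attractor $\Lambda$, its isolating neighborhood $U$, and the fixed point $P_0$. This supplies the maximal invariant set $\tilde{\Lambda}=\bigcap_{n\in\N}\tilde{f}^n(U)$, a continuous surjection $h:\tilde{\Lambda}\to\Lambda$ with $h\circ\tilde{f}=f\circ h$, and a continuous lift $H:\pi^{-1}(\tilde{\Lambda})\to\pi^{-1}(\Lambda)$ satisfying $H\circ\tilde{F}=F\circ H+v$ for some $v\in\Z^2$ and $d_0(H,\textrm{Id})<\tfrac{1}{4}$.

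Next I would use $H$ to identify the two rotation sets up to an integer translation. Iterating the intertwining identity gives $H\circ\tilde{F}^n=F^n\circ H+nv$, and since $H$ stays at bounded distance from the identity, for every $\tilde{x}\in\pi^{-1}(\tilde{\Lambda})$ with image $y=H(\tilde{x})$ one has
\[
\frac{\tilde{F}^{n}(\tilde{x})-\tilde{x}}{n}=\frac{F^n(y)-y}{n}+v+O(1/n).
\]
Because $H$ is surjective onto $\pi^{-1}(\Lambda)$, this identity will yield $\rho_{\tilde{\Lambda}}(\tilde{F})=\rho_{\Lambda}(F)+v$, so it suffices to show that $\textrm{conv}(\rho_{\tilde{\Lambda}}(\tilde{F}))$ is a rational polygon.

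For the last step I would exploit the axiom A structure of $\tilde{f}$. Since $\tilde{\Lambda}$ is the maximal $\tilde{f}$-invariant subset of $U$, every basic piece of $\tilde{f}$ meeting $\tilde{\Lambda}$ is contained in it, so $\tilde{\Lambda}\cap\Omega(\tilde{f})$ is a finite union of basic pieces by Theorem \ref{sdt}. Each such piece, being totally disconnected, is either a periodic orbit (with a rational one-point rotation set) or a saddle piece, in which case Corollary \ref{partitionscor} supplies a rotational Markov partition and Corollary \ref{polygon} produces a rational polygon. Property (3) of the rotation set from Section \ref{intro} then gives $\textrm{conv}(\rho_{\tilde{\Lambda}}(\tilde{F}))=\textrm{conv}(\rho_{\tilde{\Lambda}\cap\Omega(\tilde{f})}(\tilde{F}))$, and Remark \ref{computing} assembles the finitely many pieces into a single rational polygon. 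Translating by $-v\in\Z^2$ completes the attractor case, and the repeller statement follows by applying it to $f^{-1}$ together with property (4). The main obstacle I expect is the bookkeeping needed to verify that $\tilde{\Lambda}$ is $\tilde{f}$-invariant in both directions and does swallow every basic piece it meets, so that property (3) really reduces the computation to a finite union of full basic pieces and not to some awkward invariant subset of them.
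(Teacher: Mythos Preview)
Your proposal is correct and follows essentially the same route as the paper: approximate by a fitted axiom~A map via Theorem~\ref{dense}, invoke Theorem~\ref{thm1} to obtain the lifted semi-conjugacy $H$, deduce that the rotation sets on $\Lambda$ and $\tilde{\Lambda}$ agree up to an integer translation, and then reduce $\textrm{conv}(\rho_{\tilde{\Lambda}}(\tilde{F}))$ to a finite union of totally disconnected basic pieces handled by Corollary~\ref{polygon} and Remark~\ref{computing}. Your worry about $\tilde{\Lambda}$ swallowing each basic piece it meets is easily dispatched---transitivity of $\Lambda_i$ together with $\tilde{f}(\textrm{cl}[U])\subset U$ forces $\Lambda_i\subset U$ whenever $\Lambda_i\cap U\neq\emptyset$, and then density of periodic points gives $\Lambda_i\subset\tilde{\Lambda}$---and in fact your bookkeeping with the shift by $v$ (coming from $H\circ\tilde{F}^n=F^n\circ H+nv$) is cleaner than the paper's own argument, which appears to drop the factor $n$.
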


\begin{proof}

Let $U$ be an open and connected set such that $f(cl[U])\subset U$ and $\bigcap_{n\in\Z}f^n(U)=\Lambda$. Let $\varepsilon_0>0$ be given by the Theorem \ref{thm1}. Due to Theorem \ref{dense} we can consider a fitted axiom A diffeomorphism $\tilde{f}\in\homeo$ that $d_0(\tilde{f},f)<\varepsilon_0$. Therefore we have:

\begin{itemize}

\item $\tilde{f}(cl[U])\subset U$ and $\tilde{\Lambda}=\bigcap_{n\in\Z}\tilde{f}(U)$ is a non empty compact and connected set;

\item there exists a continuous and onto $H:\pi^{-1}(\tilde{\Lambda})\rightarrow \pi^{-1}(\Lambda)$ such that for any two lifts $\tilde{F},F$ of $\tilde{f},f$ we have $H\circ \tilde{F}=F\circ H+v$ for some $v\in\Z^2$, and $d_0(H,id)<\frac{1}{4}$. Further $H(x)+v=H(x+v)$ for every $x\in \pi^{-1}(\tilde{\Lambda}),v\in\Z^2$.

\end{itemize}

This implies on one hand that for every $x\in\pi^{-1}(\tilde{\Lambda})$ if $y=H(x)$ then $H\circ \tilde{F}^n(x)=F^n(y)+v$. Therefore, for every  $x\in\pi^{-1}(\tilde{\Lambda})$ there exists $y\in\pi^{-1}(\Lambda)$ such that $d(F^n(y),\tilde{F}^n(x))<\frac{1}{4}+\| v \|$ for every $n\in\Z$. On the other hand for every $y\in \pi^{-1}(\Lambda)$ one can consider $x\in H^{-1}(y)$ and have that $H\circ \tilde{F}^n(x)=F^n(y)+v$, so we have for every $y\in\pi^{-1}(\Lambda)$ a point $x\in\pi^{-1}(\tilde{\Lambda})$ such that $d(F^n(y),\tilde{F}^n(x))<\frac{1}{4}+\| v\|$ for every $n\in\Z$.

Is not difficult to see that the last observation implies $\rho_{\Lambda}(F)=\rho_{\tilde{\Lambda}}(\tilde{F})$. Due to the fact that $\tilde{f}(cl[U])\subset U$, and that $\tilde{f}$ is a fitted axiom A diffeomorphism, we have that $\tilde{\Lambda}\cap \Omega(\tilde{f})=\Lambda_1\cup...\cup \Lambda_m$ with $\Lambda_i$ totally disconnected basic pieces of $\tilde{f}$ for $i=1,...,m$. This implies by Remark \ref{computing} that $\textrm{conv}(\rho_{\tilde{\Lambda}}(\tilde{F}))$ is a rational polygon, so it is $\textrm{conv}(\rho_{\Lambda}(F))$.

\end{proof}

\begin{cor}

Let $f\in\homeo$ be an axiom A diffeomorphism. Then $\rho(F)$ is a rational polygon.

\end{cor}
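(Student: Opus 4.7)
The plan is to reduce the statement to results already established via the spectral decomposition. By Theorem~\ref{sdt} write $\Omega(f)=\Lambda_1\cup\cdots\cup\Lambda_n$ as a disjoint union of basic pieces of $f$. By property~(3) of the rotation set together with Theorem~\ref{convexity}, $\rho(F)=\textrm{conv}(\rho_{\Omega(f)}(F))$, so by Remark~\ref{computing} it suffices to prove that $\textrm{conv}(\rho_{\Lambda_i}(F))$ is a rational polygon for each $i$.

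Fix $i$ and split into cases according to the topology of $\Lambda_i$. If $\Lambda_i$ is a single periodic orbit, $\rho_{\Lambda_i}(F)$ reduces to a single rational point. If $\Lambda_i$ is totally disconnected and not a periodic orbit, then Remark~\ref{wucont} rules out the attractor/repeller case, since a non-trivial attractor would contain entire unstable arcs; hence $\Lambda_i$ is a saddle basic piece. Corollary~\ref{partitionscor} then supplies a rotational $us$-Markov partition for $\Lambda_i$, and Corollary~\ref{polygon} gives that $\textrm{conv}(\rho_{\Lambda_i}(F))$ is a rational polygon. If instead $\Lambda_i$ is not totally disconnected, Remark~\ref{topbasicpiece} asserts that $\Lambda_i$ is either a non-trivial attractor or a non-trivial repeller, precisely the situation addressed by Theorem~\ref{thm2}.

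The only remaining subtlety is that Theorem~\ref{thm2} requires a fixed point inside $\Lambda_i$. If such a fixed point exists the conclusion is immediate, so suppose $\Lambda_i$ has no fixed point. The second part of Theorem~\ref{sdt} provides a cyclic decomposition $\Lambda_i=\Lambda_{i_0}\cup\cdots\cup\Lambda_{i_{k-1}}$ with $f^k|_{\Lambda_{i_j}}$ topologically mixing, and since periodic points are dense in $\Omega(f)$ we may pick a periodic point $p_j\in\Lambda_{i_j}$ for each $j$; taking $N$ to be a common multiple of the periods of the $p_j$ (automatically a multiple of $k$) ensures that every $\Lambda_{i_j}$ contains a fixed point of $f^N$ and remains a non-trivial attractor (respectively repeller) of $f^N$. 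Theorem~\ref{thm2} applied to $f^N$ on $\Lambda_{i_j}$ yields that $\textrm{conv}(\rho_{\Lambda_{i_j}}(F^N))$ is a rational polygon, and property~(4) then gives $\textrm{conv}(\rho_{\Lambda_{i_j}}(F))=\frac{1}{N}\textrm{conv}(\rho_{\Lambda_{i_j}}(F^N))$, which is again a rational polygon. Invoking Remark~\ref{computing} first across the pieces $\Lambda_{i_j}$ to recover $\Lambda_i$, and then across the $\Lambda_i$, finishes the argument. The main obstacle is precisely this last step---bridging the ``attractor with prescribed fixed point'' hypothesis of Theorem~\ref{thm2} and a general basic piece---which is resolved by iterating $f$ so that the fine spectral decomposition of each piece produces the missing fixed points.
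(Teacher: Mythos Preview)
Your argument is correct and follows essentially the same route as the paper: spectral decomposition, case analysis on each piece (totally disconnected saddle pieces via Corollary~\ref{partitionscor} and Corollary~\ref{polygon}, non-totally disconnected pieces via Remark~\ref{topbasicpiece} and Theorem~\ref{thm2}), passage to an iterate to produce the fixed points required by Theorem~\ref{thm2}, and assembly via Remark~\ref{computing} and property~(4). The only cosmetic difference is that the paper passes to the iterate $f^n$ once globally at the outset, whereas you do it piece by piece for the connected basic sets; both are equivalent.
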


\begin{proof}

Due to Theorem \ref{sdt} there exists $n\in\N$ for which $f^n$ is an axiom A diffeomorphism with basic pieces $\Lambda_1,...,\Lambda_m$, such that for each $i=1,...,m$ the piece $\Lambda_i$ contains a fixed point $P_i$ of $f^m$. Whenever $\Lambda_i$ is a totally disconnected basic piece of $f$ we have seen in section \ref{theset} that $\textrm{conv}(\rho_{\Lambda_i}(F))$ is a rational polygon. On the other hand when it is not totally disconnected due to Remark \ref{topbasicpiece} it is either a fixed point or a non-trivial attractor or a non-trivial repeller. Thus, by Theorem \ref{thm2} we have that $\textrm{conv}(\rho_{\Lambda_i}(F^m))$ is a rational polygon in any case. Therefore Remark \ref{computing} implies that $\rho(F^m)$ is a rational polygon, and since $\rho(F)=\frac{1}{m}\rho(F^m)$ we have that $\rho(F)$ is a rational polygon.

\end{proof}

\end{section}

\begin{section}{Minimal sets with rotation set given by a non trivial continuum.}

In this section we observe the existence of minimal sets $\mathcal{M}\subset \T^2$ for some $f\in\homeo$, such that $\rho_{\mathcal{M}}(F)$ is a non trivial segment. We also observe that this phenomena is abundant in $\homeo$ (it occurs on an open set of the space). We start by proving that the rotation sets associated to minimal sets are always continua (compact and connected sets).

Given $F$ lift of $f\in\homeo$ we define for any $n\in\N$ the function $\varphi_n:\T^2\rightarrow \T^2$ by $\varphi_n(x):=\frac{F^n(x)-x}{n}$.

\begin{obs}\label{desplacement}

Let $f\in\homeo$. Then there exists $K>0$ such that $\parallel \varphi_{n+1}(x)-\varphi_n(x) \parallel<\frac{K}{n}$ for every $x\in\R^2$, $F$ lift of $f$ and $n\in\N$.

\end{obs}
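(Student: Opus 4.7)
The plan is to exploit the fact that, since $f$ is homotopic to the identity, any lift $F$ commutes with integer translations in the sense $F(x+v) = F(x) + v$ for every $v \in \Z^2$. Consequently the map $x \mapsto F(x) - x$ descends to a continuous function on $\T^2$, hence is bounded. Setting $M := \sup_{x \in \R^2} \lVert F(x) - x \rVert < \infty$ will provide the uniform estimate needed for $\varphi_{n+1} - \varphi_n$.

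The next step is an elementary algebraic manipulation. First I would write
\begin{equation*}
\varphi_{n+1}(x) - \varphi_n(x) = \frac{F^{n+1}(x)-x}{n+1} - \frac{F^{n}(x)-x}{n},
\end{equation*}
and bring both fractions to the common denominator $n(n+1)$. After grouping terms the identity
\begin{equation*}
\varphi_{n+1}(x) - \varphi_n(x) = \frac{F^{n+1}(x) - F^n(x)}{n+1} - \frac{F^{n}(x) - x}{n(n+1)}
\end{equation*}
comes out. Now, since $F^{n+1}(x) - F^n(x) = F(y) - y$ with $y = F^n(x)$, the first term is bounded in norm by $M/(n+1)$. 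For the second term I would iterate the triangle inequality $\lVert F^{n}(x) - x \rVert \leq \sum_{k=0}^{n-1} \lVert F(F^k(x)) - F^k(x) \rVert \leq nM$, so its norm is at most $nM/(n(n+1)) = M/(n+1)$.

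Putting the two bounds together yields $\lVert \varphi_{n+1}(x) - \varphi_n(x) \rVert \leq 2M/(n+1) \leq 2M/n$, so $K := 2M$ does the job and the constant is uniform in $x$ and in the choice of lift (different lifts differ by a constant integer vector, which gives the same $M$). There is no real obstacle here: the whole content is that $F - \text{Id}$ is $\Z^2$-periodic, and hence uniformly bounded, together with a one-line common-denominator computation. The most delicate point to state correctly is just the periodicity/boundedness argument, which is why I would spell it out explicitly before performing the algebra.
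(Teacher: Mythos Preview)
Your argument is correct and is exactly the routine computation the paper has in mind; the paper states the remark without proof, so there is nothing further to compare. One small slip: it is not true that different lifts give ``the same $M$'' --- if $F' = F + v$ with $v \in \Z^2$ then $\sup_x \lVert F'(x)-x\rVert = \sup_x \lVert F(x)-x+v\rVert$ depends on $v$. The reason your bound is nonetheless uniform in the lift is that $(F')^n(x)=F^n(x)+nv$, so $\varphi'_n = \varphi_n + v$ and hence $\varphi'_{n+1}-\varphi'_n = \varphi_{n+1}-\varphi_n$; thus the constant $K=2M$ obtained from any single fixed lift works for all of them.
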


\begin{teo}\label{rotminsetiscont}

Let $f\in\homeo$ and assume that $\mathcal{M}$ is a minimal set of $f$ and $F$ is a lift of $f$. Then $\rho_{\mathcal{M}}(F)$ is a continuum.

\end{teo}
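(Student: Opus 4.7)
The plan is to combine compactness (from property (1)) with a contradiction argument for connectedness, driven by two $O(1/n)$ bounds on $\varphi_n$. The first is Remark \ref{desplacement}. The second, obtained by a similar telescoping,
\begin{equation*}
\|\varphi_n(F(y))-\varphi_n(y)\|=\tfrac{1}{n}\|(F(F^n(y))-F^n(y))-(F(y)-y)\|\leq\tfrac{2L}{n},
\end{equation*}
controls how $\varphi_n$ changes along orbits at fixed $n$, where $L$ is any bound on the $\Z^2$-periodic displacement $F-\textrm{Id}$. Recall also that $\varphi_n$ is $\Z^2$-invariant, so it descends to a continuous map $\varphi_n:\T^2\to\R^2$, and by unwinding the definition
\begin{equation*}
\rho_{\mathcal{M}}(F)=\bigcap_{N\geq 1} A_N,\qquad A_N:=\textrm{cl}\!\left[\bigcup_{n\geq N}\varphi_n(\mathcal{M})\right].
\end{equation*}

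Assume for contradiction that $\rho_{\mathcal{M}}(F)=A\sqcup B$ with $A,B$ disjoint non-empty compacta; set $\delta=d(A,B)>0$ and pick open neighborhoods $U_A\supset A$, $U_B\supset B$ with $\textrm{cl}[U_A]\cap\textrm{cl}[U_B]=\emptyset$ and $d(U_A,U_B)\geq\delta/3$. Since the compact sets $A_N$ decrease to $\rho_{\mathcal{M}}(F)\subset U_A\cup U_B$, there is $N_0$ with $A_{N_0}\subset U_A\cup U_B$. Fix $x_0\in\pi^{-1}(\mathcal{M})$; by minimality $\{f^k(\pi(x_0))\}_{k\geq0}$ is dense in $\mathcal{M}$, so by continuity of $\varphi_n$ the chain $\{\varphi_n(F^k(x_0))\}_{k\geq0}$ is dense in $\varphi_n(\mathcal{M})$.

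Now take $n\geq N_0$ large enough that $\max(2L/n,K/n)<d(U_A,U_B)$. The second bound above forbids consecutive $k$-terms $\varphi_n(F^k(x_0))$, $\varphi_n(F^{k+1}(x_0))$ from lying on opposite sides of $U_A,U_B$, while Remark \ref{desplacement} forbids the $n$-sequence $\varphi_n(x_0)$ from jumping between them. Hence $\varphi_n(x_0)$ stays in one fixed component (say $U_A$) for all $n$ above some threshold $N_1$, and for each such $n$ the entire orbit-chain $\{\varphi_n(F^k(x_0))\}_k$ stays in $U_A$; by density $\varphi_n(\mathcal{M})\subset\textrm{cl}[U_A]$. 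It follows that $A_{N_1}\subset \textrm{cl}[U_A]$, and hence $\rho_{\mathcal{M}}(F)\subset\textrm{cl}[U_A]$, contradicting $\emptyset\neq B\subset\rho_{\mathcal{M}}(F)\setminus \textrm{cl}[U_A]$. The main technical task is to orchestrate the two $O(1/n)$ controls with a common threshold on $n$, so that ``$k$-switches'' (along the orbit at fixed $n$) and ``$n$-switches'' (at the point $x_0$ as $n$ grows) are simultaneously ruled out; once this is done, minimality propagates the conclusion from $x_0$ to all of $\mathcal{M}$.
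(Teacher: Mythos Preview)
Your argument is correct, but it takes a slightly more laborious route than the paper's. Both proofs start identically: assume $\rho_{\mathcal{M}}(F)=A\sqcup B$, find $n_0$ with $\varphi_n(\mathcal{M})$ contained in disjoint $\delta$-neighborhoods of $A$ and $B$ for all $n\geq n_0$, and use Remark \ref{desplacement} to rule out ``$n$-switches'' at any fixed point. The divergence is in how $f$-invariance enters. You introduce a second estimate $\|\varphi_n(F(y))-\varphi_n(y)\|\leq 2L/n$ to rule out ``$k$-switches'' along a single dense orbit, and then propagate to all of $\mathcal{M}$ by density. The paper instead simply sets $C_i=\{x\in\mathcal{M}:\varphi_{n_0}(x)\in \textrm{cl}[B(A_i,\delta)]\}$ and observes that this equals $\{x\in\mathcal{M}:\rho_x(F)\subset A_i\}$; the first description shows $C_i$ is closed, the second shows it is $f$-invariant (since trivially $\rho_x(F)=\rho_{f(x)}(F)$), and minimality forces one of the $C_i$ to be empty. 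This avoids your second $O(1/n)$ bound and the orbit-density step altogether. Your approach has the minor advantage of being entirely constructive along one orbit, but the paper's decomposition is shorter and makes the role of minimality more transparent.
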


\begin{proof}

Suppose for a contradiction that $\rho_{\mathcal{M}}(F)=A_1\cup A_2$, where $A_1,A_2$ are non-empty disjoint closed sets. Let $\delta=\frac{1}{3}d(A_1,A_2)$. Definition of the rotation set implies that there exists $n_0\in\N$ such that $\varphi_n(\mathcal{M})\subset B(A_1,\delta)\cup B(A_2,\delta)$ for every $n\geq n_0$. Due to Remark \ref{desplacement} we can assume at the same time that $\| \varphi_{n+1}(x)-\varphi_{n}(x) \|<\delta$ for all $n\geq n_0$ and $x\in\mathcal{M}$. As a consequence, $\varphi_{n_0}(x)\in B(A_i,\delta)$ implies $\varphi_{n}(x)\in B(A_i,\delta)$ for every $n\geq n_0$, $x\in\mathcal{M}$ and $i=1,2$. This directly yields that $\mathcal{M}$ decomposes into two disjoint sets:

$$ C_i=\{x\in\mathcal{M}:\rho_{x}(F)\subset A_i\}=\{x\in\mathcal{M}:\varphi_{n_0}(x)\in \textrm{cl}[B_{\delta}(A_i)]\};i=1,2$$ 

From the first characterization of $C_i$ we see that they are $f$-invariant, whereas the second characterization implies that they are closed. Further none of them can be empty, since this would mean that $\mathcal{M}=C_i$ for some $C_i$, and therefore $\varphi_{n}(\mathcal{M})\subset \textrm{cl}[B(A_i,\delta)]$ for all $n>n_0$, which implies that $\rho_{\mathcal{M}}(F)=A_i$ in contradiction to the definition of $A_1$ and $A_2$.

Hence, we have decomposed $\mathcal{M}$ in two non-empty closed and $f$-invariant sets, contradicting minimality.

\end{proof}

Let us now consider a space of sequences of finite symbols $\Omega=\{0,...,N\}^{\Z}$. Let $\xi':\{0,...,n_1\}\rightarrow \Omega$ be a finite word and $\xi:\{0,...,n_2\}\rightarrow \Omega$ either be a finite or infinite ($n_2=\infty$) word, with $n_2\geq n_1$. We say that $\xi'$ is a sub-word of $\xi$ if there exist $t\in\N,0\leq t\leq n_2-n_1$ such that $\xi(i+t)=\xi'(i)$ for every $i=0,...,n_1$. We say that $\xi\in\Omega$ is an almost periodic word if for every sub-word $\xi'$ of $\xi$ there exist $T_{\xi'}\in\N$, such that for every sub-word $\alpha$ of $\xi$ with length $T_{\xi'}$, we have that $\xi'$ is a sub-word of $\alpha$.

\begin{obs}\label{constminsets}

Let $\sigma:\Omega\rightarrow \Omega$ be the shift, that is $\sigma(\xi)(i)=\xi(i+1)$. Then for any almost periodic sequence $\xi\in\Omega$ the set $\mathcal{M}=cl[\{\sigma^n(\xi):n\in\Z\}]$ is a minimal set.

\end{obs}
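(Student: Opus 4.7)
The set $\mathcal{M}=\textrm{cl}[\{\sigma^n(\xi):n\in\Z\}]$ is closed and $\sigma$-invariant by construction, so the plan is to prove minimality directly: I will show that for any $\eta\in\mathcal{M}$, the sequence $\xi$ lies in the orbit closure of $\eta$, which by symmetry of the argument forces $\textrm{cl}[\{\sigma^n(\eta):n\in\Z\}]=\mathcal{M}$.

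Fix $\eta\in\mathcal{M}$. Equipping $\Omega$ with any metric compatible with the product topology, approximating $\xi$ by $\sigma^n(\eta)$ amounts to matching larger and larger central windows, i.e.\ given $N\in\N$ I must exhibit $n\in\Z$ such that $\eta(n-N)\cdots \eta(n+N)=\xi(-N)\cdots \xi(N)$. Denote this central word of $\xi$ by $w$. The first key step is the observation that every finite sub-word of $\eta$ is a sub-word of $\xi$: since $\eta$ is a limit $\sigma^{n_k}(\xi)\to\eta$, for each $L$ and all sufficiently large $k$ one has $\eta(-L)\cdots\eta(L)=\xi(n_k-L)\cdots\xi(n_k+L)$, which is by definition a sub-word of $\xi$.

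The second key step applies almost periodicity. Since $w$ is a sub-word of $\xi$, there exists $T_w\in\N$ such that every sub-word of $\xi$ of length $T_w$ contains $w$. Consider the block $\eta(0)\cdots \eta(T_w-1)$. By the previous paragraph this block is a sub-word of $\xi$, hence by almost periodicity it contains $w$: there exists $t\in\{0,\ldots,T_w-1-2N\}$ with $\eta(t)\cdots\eta(t+2N)=w$. Setting $n=t+N$ gives
$$\sigma^{n}(\eta)(-N)\cdots \sigma^{n}(\eta)(N)=\eta(t)\cdots \eta(t+2N)=\xi(-N)\cdots\xi(N),$$
so $\sigma^n(\eta)$ matches $\xi$ on the window $[-N,N]$. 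Since $N$ was arbitrary this shows $\xi\in\textrm{cl}[\{\sigma^n(\eta):n\in\Z\}]$, and therefore $\mathcal{M}\subset \textrm{cl}[\{\sigma^n(\eta):n\in\Z\}]\subset \mathcal{M}$, establishing minimality.

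The only real subtlety is the extraction of sub-words of $\xi$ from finite windows of $\eta$; once that dictionary between the product topology and combinatorics on words is set up, almost periodicity does the work of transporting any prescribed pattern of $\xi$ into any sufficiently long block of $\eta$, so the proof is essentially a two-line application of the defining property of almost periodic sequences.
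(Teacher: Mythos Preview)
Your argument is correct and is the standard proof that uniformly recurrent (almost periodic) points generate minimal subshifts. The paper itself states this result as a remark without proof, treating it as folklore; your write-up supplies exactly the expected argument, with the two essential steps being (i) every finite block of any $\eta\in\mathcal{M}$ already occurs in $\xi$, and (ii) almost periodicity forces the central block $w$ of $\xi$ to appear inside any sufficiently long block of $\eta$. One tiny cosmetic point: you implicitly use $T_w\geq 2N+1$ when writing $t\in\{0,\ldots,T_w-1-2N\}$, which is automatic since $w$ must fit inside every length-$T_w$ sub-word of $\xi$, but it does no harm to note this explicitly.
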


In the following lemma some special almost periodic sequences are constructed. Given $\xi\in\Omega$ we define $S_n(\xi)=\frac{\sum_{i=0}^{n-1} \xi(i)}{n}$.

\begin{lema}[Communicated by Tobias J\"{a}ger]\label{tobias}

For every $\delta>0$ there exists an almost periodic sequence $\xi\in\{0,1\}^{\Z}$, such that $\liminf S_n(\xi)<\delta$ and $\limsup S_n(\xi)>1-\delta$.

\end{lema}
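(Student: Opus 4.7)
The plan is to build $\xi$ by a hierarchical nested block construction, carried out in four steps.

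First I would inductively define finite $\{0,1\}$-words $a_n,b_n$ of common length $L_n$ by $a_0:=0$, $b_0:=1$ and
\[
a_{n+1}:=a_n^{k_n}\,b_n\,a_n^{k_n},\qquad b_{n+1}:=b_n^{k_n}\,a_n\,b_n^{k_n},
\]
for integers $k_n\in\N$ to be specified. Writing $\alpha_n,\beta_n$ for the relative densities of the letter $1$ in $a_n,b_n$, the recursion gives $\alpha_{n+1}-\alpha_n=\beta_n-\beta_{n+1}=(\beta_n-\alpha_n)/(2k_n+1)$, so choosing $(k_n)$ with $\sum_n 1/(2k_n+1)<\delta$ forces $\alpha_n\nearrow\alpha_\infty<\delta$ and $\beta_n\searrow\beta_\infty>1-\delta$.

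Second I would prove the combinatorial lemma: for every $n\in\N$ and every $m\geq n$ the words $a_n$ and $b_n$ occur as subwords of both $a_m$ and $b_m$ with start-to-start gap between consecutive occurrences bounded by a constant $G_n$ that depends only on $n$ (explicitly, one may take $G_n:=L_{n+1}$). This is proved by induction on $m$ using the recursive definition: inside $a_{m+1}=a_m^{k_m}b_m a_m^{k_m}$ (or $b_{m+1}$) every inherited copy of $a_m,b_m$ already provides the bound $G_n$; consecutive $a_m$-blocks are concatenated with gap exactly $L_n$ because $a_m$ begins and ends with $a_n$; and the only new gaps appear at the transitions to the middle $b_m$, which are of size $(k_n+1)L_n\leq L_{n+1}$ by a direct computation.

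Third I would fix a rapidly increasing sequence of indices $n_1<n_2<\cdots$ with $L_{n_{k+1}}\geq 2^k\sum_{j\leq k}L_{n_j}$ and define the forward half of $\xi$ as the concatenation
\[
\xi|_{\N}:=b_{n_1}\,a_{n_2}\,b_{n_3}\,a_{n_4}\,b_{n_5}\cdots,
\]
extending $\xi$ to $\Z$ as a bi-infinite point in the minimal subshift of $\{0,1\}^{\Z}$ generated by $\xi|_{\N}$.

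Fourth, the two required properties would be verified as follows. For the oscillation of averages, setting $N_k:=\sum_{j\leq k}L_{n_j}$, the last block dominates by the growth condition on $(n_k)$, so $S_{N_k}(\xi)=\beta_{n_k}+o(1)$ for odd $k$ and $S_{N_k}(\xi)=\alpha_{n_k}+o(1)$ for even $k$, giving $\limsup_N S_N(\xi)\geq\beta_\infty>1-\delta$ and $\liminf_N S_N(\xi)\leq\alpha_\infty<\delta$. For almost periodicity, a finite subword of $\xi$ either lies entirely inside a single block $a_{n_k}$ or $b_{n_k}$, in which case the combinatorial lemma applied inside every later block yields bounded-gap recurrence; or it straddles the boundary between two consecutive blocks, in which case, using that $a_m$ begins with $a_n$ and $b_m$ ends with $b_n$ for every $m\geq n$, the boundary subword of a prescribed length $\ell$ equals a fixed universal word of the form \emph{(suffix of $b_n$)(prefix of $a_n$)} or \emph{(suffix of $a_n$)(prefix of $b_n$)} for $n$ depending only on $\ell$, and such a universal word occurs as an internal subword at the recursive transitions inside $a_{n+1}$ and $b_{n+1}$, hence inside every later block, and so recurs with bounded gap in $\xi$ by the combinatorial lemma.

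The main obstacle is the combinatorial lemma of the second step: the uniform recurrence bound $G_n$, independent of $m$, requires a careful bookkeeping of the finitely many $a_n$- and $b_n$-occurrences at each transition inside $a_m$ and $b_m$; once this is in place, the rest of the argument is formal.
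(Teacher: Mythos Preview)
Your construction works but takes a genuinely different route from the paper's. The paper builds $\xi$ in one stroke as a Toeplitz-type sequence: with a rapidly growing integer sequence $a_{n+1}=2^{t+n}a_n$, it arranges that the initial block $\xi|_{[0,a_n]}$ recurs $a_{n+1}$-periodically along $\N$, filling each new layer with the constant symbol $n\bmod 2$; almost periodicity is then immediate from this periodic recurrence of initial segments, and $S_{a_n}(\xi)$ alternates below $\delta$ and above $1-\delta$ because each layer swamps the previous ones. Your substitution-type recursion with two complementary word families $a_n,b_n$ is a more flexible template (it generalises easily to other target behaviours), but it costs you the bounded-gap combinatorial lemma and the separate boundary-word analysis that the paper avoids entirely. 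Two small points to tighten: the transition gap ``$(k_n+1)L_n$'' in your second step is mis-indexed --- the clean argument is that any concatenation of $a_{n+1}$'s and $b_{n+1}$'s contains at most $2k_n$ consecutive $b_n$-blocks, which indeed yields $G_n=L_{n+1}$; and the extension to $\Z$ should come \emph{after} you have verified uniform recurrence of $\xi|_{\N}$ (your fourth step), by extending leftward one symbol at a time so that every finite window stays in the language of $\xi|_{\N}$ --- this leaves $S_n(\xi)$ unchanged and makes the bi-infinite $\xi$ almost periodic with the same language.
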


\begin{proof}

Let us fix $\delta>0$ and consider $t\in\N$ such that $2^{-t}<\delta$. We define recursively the sequence $\{a_n\}_{n\in\N}$ by $a_{n+1}=2^{t+n}a_n$ and $a_0=1$. Further we consider the following family of subsets of the positive integers:

\begin{itemize}

\item $A_n=[0,...,a_n]$, $B_0=A_0$;

\item $B_n=\left( A_n+a_{n+1}\N\right)\setminus B_{n-1}$ for every $n>0$. Note that $\biguplus_{n=0}^{\infty}B_n=\N$.

\end{itemize}
 
We consider now $\xi\in\Omega$ defined by

\begin{equation*}
\xi(i)= \ \begin{cases}

0:\mid i\mid\in B_n, \mbox{ $n$ even}\\
\\
1:\mid i\mid \in B_n, \mbox{ $n$ odd}

\end{cases}
\end{equation*}

Given a finite sub-word $\xi'$ of $\xi$ we have by construction a sub-word $w$ of $\xi$, which is the restriction of $\xi$ to some $A_{n_0}$, such that $\xi'$ is a sub-word of $w$. Furthermore again by construction, we have that every sub-word of $\xi$ with Length $a_{n_0}+2^{t+n_0}a_{n_0}$ contains $w$ as a sub-word. This implies the almost periodicity for $\xi$. 

Now, on the other hand, if we consider $n\in\N\setminus\{0\}$ even we have by construction that $S_{a_n}<\frac{1}{a_n}\cdot a_{n-1}$, and hence $S_{a_n}<\frac{1}{a_n}\cdot a_{n-1}<2^{-t}<\delta$. On the other hand, if we consider $n\in\N$ odd we have that $S_{a_n}>\frac{1}{a_n}(a_n-a_{n-1})>1-\frac{a_{n-1}}{a_n}>1-2^{-t}>1-\delta$. So, $\liminf S_n(\xi)<\delta$ and $\limsup S_n(\xi)>1-\delta$.

\end{proof}

An important corollary of the last lemma is the following:

\begin{cor}\label{exwildmin}

Let $C$ be an invariant set of $f\in\homeo$ which admits a rotational Markov partition $\mathcal{S}=\{S_0,S_1\}$. Further, let $F$ be a lift of $f$ and assume that $\tau(0)=\{0,1\}$, $\tau(1)=\{0,1\}$ and $s_0\neq s_1$. Then there exists a minimal set $\mathcal{M}$ of $f$ such that $\rho_{\mathcal{M}}(F)$ is a non-trivial segment.

\end{cor}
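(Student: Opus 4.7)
My plan is to realize the desired minimal set as a pullback, through the symbolic semi-conjugacy $h\colon C\to\Omega_{\mathcal{S}}$, of the minimal subshift supplied by Lemma~\ref{tobias}. Since $\tau(0)=\tau(1)=\{0,1\}$, every bi-infinite sequence is admissible, i.e.\ $\Omega_{\mathcal{S}}=\{0,1\}^{\Z}$. Fix $\delta=1/4$ and apply Lemma~\ref{tobias} to obtain an almost-periodic $\xi\in\{0,1\}^{\Z}$ with $\liminf_n S_n(\xi)<\delta$ and $\limsup_n S_n(\xi)>1-\delta$. By Remark~\ref{constminsets}, $\Sigma:=\textrm{cl}[\{\sigma^n\xi:n\in\Z\}]$ is a minimal subshift. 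Since $h$ is onto, pick $x_0\in h^{-1}(\xi)\subset C$ and set $\mathcal{N}:=\textrm{cl}[\{f^n(x_0):n\in\Z\}]$; continuity of $h$ together with compactness of $\mathcal{N}$ yield $h(\mathcal{N})=\Sigma$. Zorn's lemma supplies a minimal set $\mathcal{M}\subset\mathcal{N}$ for $f$. The image $h(\mathcal{M})$ is a non-empty closed $\sigma$-invariant subset of $\Sigma$, and minimality of $\Sigma$ forces $h(\mathcal{M})=\Sigma$; in particular there exists $y_0\in\mathcal{M}$ with $h(y_0)=\xi$.

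To locate $\rho_{\mathcal{M}}(F)$, I apply Proposition~\ref{disp}: for $y\in\mathcal{M}$, $\tilde{y}\in\pi^{-1}(y)$, and $n\in\N$, writing $\eta=h(y)$,
$$\left\|\frac{F^n(\tilde{y})-\tilde{y}}{n}-\bigl((1-S_n(\eta))s_0+S_n(\eta)s_1\bigr)\right\|<\frac{2d_{\mathcal{S}}}{n},$$
because when $\eta$ takes values in $\{0,1\}$ one has $\sum_{j=0}^{n-1}s_{\eta(j)}=n((1-S_n(\eta))s_0+S_n(\eta)s_1)$. Letting $n\to\infty$ the error vanishes, so $\rho_{\mathcal{M}}(F)$ is contained in the line segment $L:=\{(1-t)s_0+ts_1:t\in[0,1]\}$ joining $s_0$ and $s_1$.

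It remains to produce two distinct points of $\rho_{\mathcal{M}}(F)$ inside $L$. Testing on $y_0$, the elementary bound $|S_{n+1}(\xi)-S_n(\xi)|\le 1/(n+1)\to 0$ implies that the set of accumulation points of $(S_n(\xi))_n$ is exactly the interval $[\liminf_n S_n(\xi),\limsup_n S_n(\xi)]$, which contains $[\delta,1-\delta]$. Choosing subsequences $n_i,m_i\to\infty$ with $S_{n_i}(\xi)\to\delta$ and $S_{m_i}(\xi)\to 1-\delta$, the displayed estimate above forces
$$\frac{F^{n_i}(\tilde{y}_0)-\tilde{y}_0}{n_i}\to(1-\delta)s_0+\delta s_1,\qquad \frac{F^{m_i}(\tilde{y}_0)-\tilde{y}_0}{m_i}\to \delta s_0+(1-\delta)s_1,$$
which are distinct since $s_0\neq s_1$. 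By Theorem~\ref{rotminsetiscont}, $\rho_{\mathcal{M}}(F)$ is a continuum; a connected subset of the line through $s_0$ and $s_1$ containing two distinct points is a non-trivial sub-segment of $L$, so we are done. The only point requiring care is that extracting $\mathcal{M}$ from $\mathcal{N}$ via Zorn might in principle destroy the oscillation of $S_n(\xi)$; this is precisely excluded by the minimality of $\Sigma$, which guarantees $h(\mathcal{M})=\Sigma$ and therefore furnishes the specific point $y_0\in\mathcal{M}$ with $h(y_0)=\xi$ on which the above computation is carried out.
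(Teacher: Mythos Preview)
Your argument is correct and follows essentially the same route as the paper: build the minimal subshift $\Sigma$ from Lemma~\ref{tobias}, pull it back via the semi-conjugacy to a minimal set $\mathcal{M}\subset C$, and combine Theorem~\ref{rotminsetiscont} with the two distinct limits coming from the oscillation of $S_n(\xi)$.

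The only genuine difference is cosmetic. The paper first normalizes to $s_0=0$, $s_1=(1,0)$ via an integer affine conjugacy, so that $\psi(\xi_n)/n=S_n(\xi)$ literally, and then appeals to Corollary~\ref{rs}; you instead work directly with arbitrary $s_0\neq s_1$ using Proposition~\ref{disp} and the identity $\tfrac{1}{n}\sum_{j<n}s_{\eta(j)}=(1-S_n(\eta))s_0+S_n(\eta)s_1$. Your version avoids the change of coordinates at the cost of carrying the affine parametrization of $L$ through the estimate; the paper's version buys a cleaner formula but needs the extra reduction lemma about toral automorphisms. Both are equally valid and equally short.
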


\begin{proof}

Let us first assume that $s_0=0$ and $s_1=(1,0)$. Consider $\delta\in (0,\frac{1}{2})$, and $\xi\in\Omega_{\mathcal{S}}$ given by Lemma \ref{tobias}. By Remark \ref{constminsets} we have that $\mathcal{M}'=cl[\{\sigma^n(\xi):n\in\Z\}]$ is a minimal set for the shift $\sigma:\Omega_{S}\rightarrow\Omega_{\mathcal{S}}$.

Let $\mathcal{M}$ be a minimal set of $f$ contained in $h_{\mathcal{S}}^{-1}(\mathcal{M}')$. Since $\mathcal{M}'$ is a minimal set of $\sigma$, we have that $h_{\mathcal{S}}(\mathcal{M})=\mathcal{M}'$, and in particular $\xi\in h_{\mathcal{S}}(\mathcal{M})$. Then, by defining
\\$\xi_n:\{0,...,n-1\}\rightarrow\{0,1\}$ as $\xi_n(i)=\xi(i)$ for every $i\in\Z$, we have that $\xi_n\in\Omega_{\mathcal{S}}^f$ and $\frac{\psi(\xi_n)}{n}=S_n$ for every $n\in\N$. Thus, by Corollary \ref{rs} $\liminf S_n(\xi)$ and $\limsup S_n(\xi)$ are two different points in $\rho_{\mathcal{M}}(F)$. Further, by construction we know that $\rho_{\mathcal{M}}(F)\subset \textrm{conv}(\{(0,0),(1,0)\})$. Then by Theorem \ref{rotminsetiscont} we have that $\rho_{\mathcal{M}}(F)$ is a non trivial segment.

Now, let $L:\R^2\rightarrow \R^2$ be a afine linear isomorphism with $L(\Z^2)\subset \Z^2$ and denote by $A_L$ the induced torus automorphism. Then if $C$ is a closed $f$-invariant set and $g:=A_L\circ f\circ A_L^{-1}$, we have $\rho_{A_L^{-1}(C)}(g)=L(\rho_C(f))$ (see \cite{Kw1}). Hence by considering $L$ taking $s_0$ to $0$ and $s_1$ to $(1,0)$ and applying the argument above to $g$, we get the result in full generality.

\end{proof}

In what follows, we want to see that the existence of minimal sets with non-trivial segments as rotation sets is abundant in the $C^0$ topology. In fact, we will prove first that for any axiom A diffeomorphism whose rotaion set has non-empty interior this kind of minimal sets exists, and then we extend its existence to an open set in $\homeo$. Given any basic piece $\Lambda\subset\T^2$ we write $\tilde{\Lambda}=\pi^{-1}(\Lambda)$.

\begin{lema}\label{prevexwimin}

Let $\Lambda$ be a totally disconnected saddle basic piece of $f\in\homeo$ with a fixed point $P_0\in\Lambda$. Further consider a rotational Markov partition $\mathcal{S}=\{S_0,...,S_N\}$ of $\Lambda$ such that $P_0\in S_0$ and assume that $\textrm{conv}(\rho_{\Lambda}(F))$ is not a singleton. Then, for a lift $F$ of $f$ and $\tilde{P}_0\in\pi^{-1}(P_0)$ with $F(\tilde{P}_0)=\tilde{P_0}$, we have that $W^u(\tilde{P}_0,F)\cap \tilde{\Lambda}$ has non-empty intersection with at least two connected components of $\pi^{-1}(S_0)$.

\end{lema}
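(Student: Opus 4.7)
\emph{Setup.} Because $F(\tilde{P}_0)=\tilde{P}_0$ and $\tilde{P}_0\in\tilde{S}_0\subset D$, the inclusion $F(\tilde{S}_0)\subset D+s_0$ forces $\tilde{P}_0\in D+s_0$; triviality of $D$ then gives $s_0=0$. Moreover the cyclic decomposition of $\Lambda$ supplied by Theorem \ref{sdt} must be trivial (its pieces are pairwise disjoint but must each contain the fixed point $P_0$), so $f|_{\Lambda}$ is topologically mixing, $W^u(P_0,f)$ is dense in $\Lambda$, and the Markov subshift $(\Omega_{\mathcal{S}},\sigma)$ is irreducible.

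\emph{A good periodic word.} The plan is to build a periodic word $\nu\in\Omega_{\mathcal{S}}^{Per}$ with $\nu(0)=0$ and $\psi(\nu)\neq 0$. Since $\textrm{conv}(\rho_{\Lambda}(F))$ is not a singleton, Corollary \ref{polygon} yields two periodic words $\nu_1,\nu_2$ with distinct rotation vectors $\psi(\nu_k)/l_{\nu_k}$. By irreducibility, for each $k$ one chooses admissible connectors $\alpha_k$ starting at $0$ so that $\alpha_k\nu_k$ is admissible, and $\beta_k$ running from the end of $\nu_k$ to a symbol transitioning to $0$. Then $\mu_k^{(N)}:=\alpha_k\nu_k^{N}\beta_k$ is a periodic word starting with $0$ whose rotation vector converges to $\psi(\nu_k)/l_{\nu_k}$ as $N\to\infty$. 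For $N$ large, the two vectors still differ, so at least one of the $\mu_k^{(N)}$ has nonzero $\psi$-value; call it $\nu$.

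\emph{Direct construction of two distinct lift components.} By the Markov property and expansivity, $\nu^{\infty}$ is realized by a periodic orbit $z\in S_0\cap\Lambda$ of period $l_\nu$, which I may arrange to lie in $\textrm{int}(S_0)$ by an admissible refinement if needed. Lifting to $\tilde{z}\in\tilde{S}_0$ gives $F^{l_\nu}(\tilde{z})=\tilde{z}+\psi(\nu)$. The density of $W^u(P_0,f)\cap\Lambda$ together with the local product structure of $\Lambda$ produces a heteroclinic point $w\in W^u(P_0,f)\cap W^s(z,f)$. Lift $w$ to $\tilde{w}\in W^u(\tilde{P}_0,F)$ and let $\tilde{z}_0\in\tilde{S}_0+v_0$ be the companion lift of $z$ for which $F^n(\tilde{w})-F^n(\tilde{z}_0)\to 0$ in $\R^2$. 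For $n$ large, $f^{nl_\nu}(w)\in S_0$, and the shadowing combined with the separation of distinct integer translates of $\tilde{S}_0$ forces $F^{nl_\nu}(\tilde{w})\in\tilde{S}_0+v_0+n\psi(\nu)$. Since $W^u(\tilde{P}_0,F)$ is forward invariant, each such iterate belongs to $W^u(\tilde{P}_0,F)\cap\tilde{\Lambda}$; as $\psi(\nu)\neq 0$, these iterates visit infinitely many distinct connected components of $\pi^{-1}(S_0)$, giving the required two.

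\emph{Main obstacle.} I expect the main technical burden to lie in the second step, where one must massage arbitrary periodic words into periodic words starting at $0$ while preserving a nonzero $\psi$-value, and in the last step, in verifying that $F^{nl_\nu}(\tilde{w})$ genuinely lands in the translate $\tilde{S}_0+v_0+n\psi(\nu)$ rather than in a neighbouring one; the latter hinges on the diameter of $\mathcal{S}$ being small compared with the spacing of integer translates of $\tilde{S}_0$, together with a clean choice of the companion lift $\tilde{z}_0$ from the local product structure.
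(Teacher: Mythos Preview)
Your argument is correct and in fact yields infinitely many translates, but it follows a different route from the paper. The paper opens with a boundedness observation: if $W^u(\tilde P_0,F)\cap\tilde\Lambda$ met at most one integer translate of each $\tilde S_i$, then $\mathrm{cl}[W^u(\tilde P_0,F)]\cap\tilde\Lambda$ would be a bounded $F$-invariant set projecting onto $\Lambda$ (by density of $W^u(P_0,f)$), forcing $\rho_\Lambda(F)=\{0\}$; this immediately produces some symbol $i_0$ and $v_0\neq v_1\in\Z^2$ with $W^u(\tilde P_0,F)\cap\tilde\Lambda$ meeting both $\tilde S_{i_0}+v_k$, and the paper then builds two explicit itineraries $\ldots 000\,\alpha_k\beta\,000\ldots$ (with $\alpha_k$ going from $0$ to $i_0$ realizing displacement $v_k$, and a common $\beta$ from $i_0$ back to $0$) to transport these to two distinct translates of $\tilde S_0$. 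You instead manufacture a periodic word $\nu$ with $\nu(0)=0$ and $\psi(\nu)\neq 0$ via Corollary~\ref{polygon} and irreducibility, then ride the associated periodic orbit through a heteroclinic connection. The paper's opening move sidesteps the word-massaging of your second step; your route is more constructive and delivers a stronger conclusion. One caveat: the ``admissible refinement'' to place $z\in\mathrm{int}(S_0)$ is not permitted here (the lemma fixes $\mathcal S$) but is also unnecessary, since the $S_i$ are closed, pairwise disjoint and cover $\Lambda$, so $S_0\cap\Lambda$ is relatively open in $\Lambda$ and $f^{nl_\nu}(w)\in\Lambda$ close to $z$ automatically lies in $S_0$. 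In fact you can drop the periodic orbit and heteroclinic machinery altogether: once $\nu$ is in hand, the itinerary $\xi=\ldots 000\,\nu\nu\nu\ldots\in\Omega_{\mathcal S}$ already realizes a point $\tilde y\in\tilde S_0\cap W^u(\tilde P_0,F)\cap\tilde\Lambda$ with $F^{nl_\nu}(\tilde y)\in\tilde S_0+n\psi(\nu)$ directly from the rotational partition property---which is exactly the mechanism the paper exploits at the end of its own construction.
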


\begin{proof}

To prove this it is sufficient to do it for some particular $\tilde{P}_0$ and $F$ with $F(\tilde{P}_0)=\tilde{P_0}$. Let $D$ be a trivial domain such that $\tilde{P}_0\in D$, $\pi(D)\supset \bigcup_{i=0}^{N}S_i$ and  let $\tilde{S}_i=D\cap \pi^{-1}(S_i)$. We affirm first that there exists $i_0\in\{0,...,N\}$ such that $W^u(\tilde{P}_0,F)\cap \tilde{\Lambda}\cap \tilde{S}_{i_0}+v_0\neq\emptyset$ and $W^u(\tilde{P}_0,F)\cap \tilde{\Lambda}\cap \tilde{S}_{i_0}+v_1\neq\emptyset$ for some $v_0,v_1\in\Z^2$ with $v_0\neq v_1$. Otherwise we have necessarily that $cl[W^u(\tilde{P}_0,F)]\cap\tilde{\Lambda}=:\tilde{\Lambda}_0$ is a bounded $F$-invariant set with $\pi(\tilde{\Lambda}_0)=\Lambda$. Hence $\rho_{\Lambda}(F)=\{0\}$ which is absurd.

Now, we can consider two points $\tilde{x}_0,\tilde{x}_1\in\tilde{S}_0\cap\tilde{\Lambda}$ such that $F^n(\tilde{x}_0)\in \tilde{S}_{i_0}+v_0$ and $F^n(\tilde{x}_1)\in \tilde{S}_{i_0}+v_1$. For $x_0=\pi(\tilde{x}_0)$ and $x_1=\pi(\tilde{x}_1)$ we consider the finite words 
\\$\alpha_0:\{0,...,n\}\rightarrow \{0,...,N\}$ and $\alpha_1:\{0,...,n\}\rightarrow \{0,...,N\}$ such that $\alpha_0(i)=h_{\mathcal{S}}(x_0)(i)$ and $\alpha_1(i)=h_{\mathcal{S}}(x_1)(i)$ for every $i\in \{0,...,N\}$. Further since $\Lambda$ is transitive we can consider a finite word $\beta:\{0,...,m\}\rightarrow \{0,...,N\}$ such that $\beta(0)=i_0$ and $\beta(m)=0$.

As we have seen in the section \ref{Simbolicstuff} we can take now $\xi_0,\xi_1\in\Omega_{\mathcal{S}}$ defined as:

\begin{equation*}
\xi_0(i)= \ \begin{cases}

0:i\leq0,i\geq n+m\\
\\
\alpha_0\beta(i):0\leq i\leq n+m

\end{cases}
\\ \\ \mbox{ , }\xi_1(i)= \ \begin{cases}

0:i\leq0,i\geq n+m\\
\\
\alpha_1\beta(i):0\leq i\leq n+m

\end{cases}
\end{equation*}

Let $y_0\in h_{\mathcal{S}}^{-1}(\xi_0)$, $y_1\in h_{\mathcal{S}}^{-1}(\xi_1)$, $\tilde{y}_0\in\pi^{-1}(y_0)\cap\tilde{S}_0$ and $\tilde{y}_1\in\pi^{-1}(y_1)\cap \tilde{S}_0$. Is easy to see that $\tilde{y}_0,\tilde{y}_1\in W^u(\tilde{P}_0,F)\cap \tilde{\Lambda}$.

Due to the fact that $\mathcal{S}$ is a rotational Markov partition, we have on the one hand that $F^i(\tilde{y}_0)$ and $F^i(\tilde{x}_0)$ belong to the same connected component of $\pi^{-1}(\bigcup_{i=0}^N S_i)$ for every $i=0,...,n$, and the same holds for $F^i(\tilde{y}_1)$ and $F^i(\tilde{x}_1)$. Otherwise we necessarily have that $F(S_{i*})$ intersects $S_{j*}+u_0$ and $S_{j*}+u_1$, for some $i_*,j_*\in\{0,...,N\}$ with $u_0\neq u_1\in \Z^2$. 

On the other hand, by the same reason, for every $j\in\{0,...,m\}$ we have the following:

\begin{center}

if $F^{n+j}(\tilde{y}_0)\in \tilde{S}_{i_j}+w_j$, $w_j\in\Z^2$, then $F^{n+j}(\tilde{y}_1)\in \tilde{S}_{i_j}+w_j+v$, where $v=v_1-v_0$.

\end{center}

Hence $F^{n+m}(\tilde{y}_0)\in \tilde{S}_{0}+w_{m}$ and $F^{n+m}(\tilde{y}_1)\in \tilde{S}_{0}+w_{m}+v$ with $v\neq 0$.

\end{proof}

\begin{teo}\label{wildminsetaxa}

Let $\Lambda$ be a totally disconnected saddle basic piece of $f\in\homeo$ with a fixed point $P_0\in\Lambda$. Further assume that $\textrm{conv}(\rho_{\Lambda}(F))$ is not a singleton for some $F$ lift of $f$. Then $\Lambda$ contains a minimal set $\mathcal{M}$ such that $\rho_{\mathcal{M}}(F)$ is a non-trivial segment.

\end{teo}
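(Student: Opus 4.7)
The plan is to extract, from the hypothesis $\textrm{conv}(\rho_{\Lambda}(F))\neq\{v\}$, a two-symbol rotational Markov sub-partition for a suitable iterate $f^L$ in the form required by Corollary \ref{exwildmin}, and then promote the minimal set it produces for $f^L$ to a minimal set for $f$ with the same rotation segment (up to the factor $1/L$).

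By Corollary \ref{partitionscor} I would fix a rotational $us$-Markov partition $\mathcal{S}=\{S_0,\dots,S_N\}$ of $\Lambda$ with the intersection property, with $P_0\in S_0$ and $d_{\mathcal{S}}<\varepsilon_0$ (the threshold of Remark \ref{smallsize}). Lemma \ref{prevexwimin}, together with the transitivity of $\Lambda$, supplies two admissible finite words $\nu_0,\nu_1\in\Omega_{\mathcal{S}}^{Per}$ of a common length $L$, both beginning and ending with the symbol $0$ (concatenation is free since $f(P_0)=P_0$ gives $0\in\tau(0)$), and satisfying $\psi(\nu_0)\neq\psi(\nu_1)$. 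Concretely, with the notation from the proof of Lemma \ref{prevexwimin}, one takes $\nu_i=\alpha_i\beta$.

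For $i\in\{0,1\}$ I then cut from $S_0$ the rectangle
\[
V_i=\Bigl[\,S_0\cap\bigcap_{j=1}^{L-1}f^{-j}(S_{\nu_i(j)})\cap f^{-L}(S_0)\,\Bigr]_{x_i},
\]
where $x_i\in\Lambda\cap S_0$ realizes the coding $\nu_i$. Iterating Remark \ref{verrhor}, each $V_i$ is a vertical sub-rectangle of the $us$-box $S_0$ and $f^L(V_i)$ is a horizontal sub-rectangle stretching fully across $S_0$; since $\nu_0\neq\nu_1$ as words, the sets $V_0$ and $V_1$ are disjoint. The next step is to verify that $\mathcal{S}'=\{V_0,V_1\}$ is a rotational Markov partition of $C'=\bigcap_{k\in\Z}f^{-kL}(V_0\cup V_1)$ for $f^L$: the induced transition law is the full one $\tau'(0)=\tau'(1)=\{0,1\}$ because a horizontal rectangle stretching across $S_0$ meets both vertical rectangles $V_j$; the rotational condition is inherited from the trivial domain of $\mathcal{S}$ via $d_{\mathcal{S}'}\le d_{\mathcal{S}}<\varepsilon_0$; and an application of Proposition \ref{disp} identifies the associated displacement vectors as $s'_i=\psi(\nu_i)$, which are distinct by construction.

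Finally, Corollary \ref{exwildmin} applied to $(f^L,F^L,C',\mathcal{S}')$ produces a minimal set $\mathcal{M}^*$ of $f^L$ whose rotation set $\rho_{\mathcal{M}^*}(F^L)$ is a non-trivial segment. Setting $\mathcal{M}=\bigcup_{i=0}^{L-1}f^i(\mathcal{M}^*)\subset\Lambda$, a standard argument based on the $f^L$-minimality of $\mathcal{M}^*$ (splitting $n=kL+r$ and using density of the $f^{kL}$-iterates inside $\mathcal{M}^*$) shows that $\mathcal{M}$ is $f$-minimal. A short bounded-displacement estimate then gives $\rho_{\mathcal{M}}(F)=\rho_{\mathcal{M}^*}(F)=\tfrac{1}{L}\rho_{\mathcal{M}^*}(F^L)$ by property (4) from Section \ref{intro}, a non-trivial segment. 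The main obstacle is the verification in the previous paragraph that $\{V_0,V_1\}$ really is a rotational Markov partition for $f^L$ with the full transition law and two distinct integer displacements; this is the step where the $us$-box structure, the intersection property, and the smallness of $d_{\mathcal{S}}$ are used in tandem. Once it has been carried out, the remainder of the argument is essentially bookkeeping together with results already catalogued in Sections \ref{intro}--\ref{theset}.
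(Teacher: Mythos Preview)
Your proposal is correct and follows essentially the same route as the paper's proof: extract from Lemma \ref{prevexwimin} two vertical sub-rectangles of $S_0$ that under an iterate $f^L$ map to horizontal rectangles spanning $S_0$, verify they form a two-symbol rotational Markov partition with full transition and distinct displacement vectors, apply Corollary \ref{exwildmin}, and then promote the resulting $f^L$-minimal set to an $f$-minimal set. The only cosmetic difference is that the paper takes one of the two rectangles to be $[f^{-n}(S_0)\cap S_0]_{P_0}$ (the itinerary $0^n$ of the fixed point, giving $s'_0=0$ directly), whereas you take both rectangles from the words $\alpha_i\beta$ constructed inside the proof of Lemma \ref{prevexwimin}; this changes nothing of substance since Corollary \ref{exwildmin} only needs $s'_0\neq s'_1$.
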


\begin{proof}

Let us first fix $\tilde{P_0}\in\pi^{-1}(P_0)$ and a lift $F$ of $f$ with $F(\tilde{P}_0)=\tilde{P}_0$. We consider a rotational $us$-Markov partition $\mathcal{S}=\{S_0,...,S_N\}$ given by Corollary \ref{partitionscor} with $P_0\in S_0$. Further we consider a trivial domain $D$ such that $\pi(D)\supset \bigcup_{i=0}^N S_i$ and $\tilde{S}_i=D\cap \pi^{-1}(S_1)$. Thus we know by Lemma \ref{prevexwimin} that there exists $n\in\N$ and $\tilde{x}\in\tilde{S}_0\cap\tilde{\Lambda}$ such that $F^n(\tilde{x})\in \tilde{S}_0+v\cap\tilde{\Lambda}$ with $v\in\Z^2\setminus\{0\}$. Set $\pi(\tilde{x})=x$.

We now define $g=f^n$. Due to the choice of $\mathcal{S}$ we know that $g_*(\mathcal{S})$ is a rotational
\\ $us$-Markov partition of $\Lambda$ for $g$. Furthermore the sets $S'_0:=[f^{-n}(S_0)\cap S_0]_{P_0}=$
\\ $\pi([F^{-n}(\tilde{S}_0)\cap \tilde{S}_0]_{\tilde{P}_0})$ and $S'_1:=[f^{-n}(S_0)\cap S_0]_{x}=\pi([F^{-n}(\tilde{S}_0)\cap \tilde{S}_0]_{\tilde{x}})$ are two elements in $g_*(\mathcal{S})$, which are vertical rectangles of $S_0$, and the sets $T'_0=f^n(S'_0)$, $T'_1=f^n(S'_1)$ are horizontal rectangles in $S_0$.

Due to the local product structure of $\Lambda$ and the fact that $\mathcal{S}$ is a $us$-partition, we have that
\\ $S'_0\cap T'_0\cap \Lambda\neq \phi$, $S'_0\cap T'_1\cap\Lambda\neq \phi$, $S'_1\cap T'_0\cap\Lambda\neq \phi$ and $S'_1\cap T'_1\cap \Lambda\neq \phi$. Hence we have that $\tau_g(0)\supset \{0,1\}$ and $\tau_g(1)\supset \{0,1\}$. Further by definition we have $s'_0=0$ and $s'_1=v$ for $F^n$.

Let us now consider $C=h^{-1}_{g_*(S)}(\{\xi\in\Omega_{g_*(\mathcal{S})}:\xi(i)\in\{0,1\}\mbox{ for every $i\in\Z$}\})$. The above argument implies that $C$ is a a non-empty $g$-invariant closed set. Furthermore by construction $\mathcal{S}'=\{S'_0,S'_1\}$ is a rotational Markov partition of $C$ for $g=f^n$ with $s'_0=0$ and $s'_1=v$ for $F^n$. Therefore due to Corollary \ref{exwildmin} there exists a minimal set $\mathcal{M}'$ of $g$ such that $\rho_{\mathcal{M}'}(F^n)$ is a non-trivial segment. Hence, defining $\mathcal{M}=\mathcal{M}'\cup...\cup f^{n-1}(\mathcal{M'})$ we have that $\mathcal{M}$ is a minimal set of $f$ contained in $\Lambda$ with $\rho_{\mathcal{M}}(F)=\frac{1}{n}\rho_{\mathcal{M}'}(F^n)$.

\end{proof}

We have thus obtained a criterion to ensure the existence of minimal sets with non-trivial segments as rotation sets. Using this result we can now show that this phenomena is abundant in $\homeo$.

\begin{cor}\label{abun0}

Let $f\in\homeo$ be a fitted axiom A diffeomorphism with $\textrm{int}(\rho(F))$ non-empty. Then, there exists a minimal set $\mathcal{M}$ of $f$ such that $\rho_{\mathcal{M}}(F)$ is a non-trivial segment.

\end{cor}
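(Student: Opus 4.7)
The plan is to apply Theorem \ref{wildminsetaxa} to a well-chosen basic piece of $f$, after passing to an iterate in order to promote a periodic point to a fixed point. The delicate step is producing a basic piece whose rotation set is not a singleton; once that is in hand, the remainder is routine.

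By the spectral decomposition theorem applied to the fitted axiom A diffeomorphism $f$, I write $\Omega(f)=\Lambda_1\cup\cdots\cup\Lambda_m$ as a disjoint union of totally disconnected basic pieces. First I classify them: by Remark \ref{wucont} a non-trivial attractor contains a full one-dimensional unstable manifold, hence an arc, contradicting total disconnectedness, and analogously for repellers. Thus each $\Lambda_i$ is either an isolated periodic orbit or a non-trivial saddle basic piece. Next I claim some $\Lambda_i$ has $\textrm{conv}(\rho_{\Lambda_i}(F))$ non-singleton. If not, then by property (3) of the rotation set, $\rho(F)=\textrm{conv}(\rho_{\Omega(f)}(F))$ is the convex hull of the finite set $\{\rho_{\Lambda_i}(F)\}_i$ of rational vectors, and every periodic rotation vector of $f$ lies in this finite set. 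But by Franks's realization theorem (\cite{F1,F2}), every rational vector in $\textrm{int}(\rho(F))$ is the rotation vector of some periodic orbit, and the non-empty open set $\textrm{int}(\rho(F))$ contains infinitely many rationals, a contradiction. Fix such a piece $\Lambda$, necessarily non-trivial of saddle type.

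To obtain the fixed-point hypothesis of Theorem \ref{wildminsetaxa}, I pick a periodic point $P\in\Lambda$ of period $n$ and pass to $f^n$. The cyclic part of Theorem \ref{sdt} writes $\Lambda=\Lambda_{j_0}\sqcup\cdots\sqcup\Lambda_{j_{k-1}}$, with each $\Lambda_{j_r}$ a basic piece of $f^n$ and $f$ permuting them cyclically; I choose $\Lambda_{j_0}$ to contain $P$, now a fixed point of $f^n$. A short computation, using that $F^r-\textrm{id}$ is uniformly bounded so that $F^{nk+r}(\tilde{x})-F^r(\tilde{x})$ agrees with $F^{nk}(\tilde{x})-\tilde{x}$ up to a bounded error that vanishes after division by $k$, shows that $\rho_{f^r(C)}(F^n)=\rho_C(F^n)$ for any $f^n$-invariant $C$ and any $r$. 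In particular $\rho_{\Lambda_{j_r}}(F^n)$ is independent of $r$, so combined with property (4) each equals $\rho_\Lambda(F^n)=n\cdot\rho_\Lambda(F)$, still non-singleton. Theorem \ref{wildminsetaxa} applied to $f^n$ and $\Lambda_{j_0}$ then furnishes a minimal set $\mathcal{M}'\subset\Lambda_{j_0}$ of $f^n$ with $\rho_{\mathcal{M}'}(F^n)$ a non-trivial segment.

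Finally I set $\mathcal{M}:=\bigcup_{r=0}^{n-1}f^r(\mathcal{M}')$. A standard argument shows $\mathcal{M}$ is $f$-minimal: any non-empty closed $f$-invariant subset intersects some $f^r(\mathcal{M}')$, hence meets $\mathcal{M}'$ in a non-empty closed $f^n$-invariant set, which by $f^n$-minimality is all of $\mathcal{M}'$, and $f$-invariance then recovers all of $\mathcal{M}$. The same invariance of rotation vectors under $f$ gives $\rho_\mathcal{M}(F^n)=\rho_{\mathcal{M}'}(F^n)$, and applying property (4) once more yields $\rho_\mathcal{M}(F)=\frac{1}{n}\rho_{\mathcal{M}'}(F^n)$, still a non-trivial segment. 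The main obstacle is the second paragraph: without an outside input like Franks's realization theorem, I see no way to exclude the pathological scenario in which every basic piece carries a rational singleton rotation set while the convex hull of these singletons still has non-empty interior.
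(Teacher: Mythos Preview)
Your proof is correct and follows essentially the same route as the paper's: invoke Franks's realization theorem to force some basic piece to carry a non-singleton rotation set, pass to an iterate to obtain a fixed point in that piece, apply Theorem \ref{wildminsetaxa}, and then promote the resulting $f^n$-minimal set to an $f$-minimal one via the union $\bigcup_{r=0}^{n-1}f^r(\mathcal{M}')$. The only organizational difference is the order of operations: the paper first replaces $f$ by an iterate $g=f^n$ chosen so that every basic piece of $g$ already contains a fixed point, and then runs the Franks pigeonhole argument directly for $g$; you instead locate the good piece for $f$ first and only afterwards pass to the iterate determined by a periodic point in that piece, which obliges you to check that the cyclic sub-pieces $\Lambda_{j_r}$ all share the same (non-singleton) rotation set under $F^n$. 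Both orderings work, and your version spells out a few details (minimality of $\mathcal{M}$, invariance of $\rho$ under the $f$-translates) that the paper leaves implicit.
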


\begin{proof}

By the same argument as in the final part of the previous proof, is sufficient to show the existence of a minimal set with the rotation set given by a non-trivial segment for some power of $f$. Let us consider $n\in\N$ such that each basic piece of $g:=f^n$ has a fixed point of $g$. Due to a result by Franks \cite{F2}, we have for any vector $v\in\Q^2\cap \textrm{int}(\rho(G))$ a periodic point $x\in\T^2$ such that $\lim_n \frac{G^n(\tilde{x})-\tilde{x}}{n}=v$, where $G=F^n$. Therefore, if $\Lambda_1,...,\Lambda_m$ are the basic pieces of $g$, there exists $i_0\in\{1,...,m\}$ such that $\Lambda_{i_0}$ contains two periodic points $x_1,x_2$ with $\lim_n \frac{G^n(\tilde{x}_1)-\tilde{x}_1}{n}=v_1$ and $\lim_n \frac{G^n(\tilde{x}_2)-\tilde{x}_2}{n}=v_2$ for two different vectors $v_1$ and $v_2$. Thus $\textrm{conv}(\rho_{\Lambda}(G))$ is not a singleton.

Further, it is not difficult to see that $\Lambda$ must be a totally disconnected saddle basic piece, which by definition of $g$ contains some fixed point. Thus, by Theorem \ref{wildminsetaxa} we obtain the existence of a minimal set of $g$ with a non-trivial segment as rotation set.

\end{proof}

By the same techniques as in the proof of Theorem \ref{thm2}, one can achieve the generalization of the last corollary to any axiom A diffeomorphism in $\homeo$.

\begin{cor}

Let $f\homeo$ be a axiom A diffeomorphism with $\textrm{int}(\rho(F))$ non empty for some lift $F$ of $f$. Then, there exists $\mathcal{M}$ minimal set of $f$ such that $\rho_{\mathcal{M}}(F)$ is a non-trivial segment.

\end{cor}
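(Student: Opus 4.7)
The plan is to combine the argument of Corollary \ref{abun0} with the perturbation technique from the proof of Theorem \ref{thm2}. I would first pass to a power $g=f^n$ (via Theorem \ref{sdt}) such that every basic piece of $g$ contains a fixed point of $g$; since $\rho(G)=n\cdot \rho(F)$, the hypothesis gives $\textrm{int}(\rho(G))\neq\emptyset$. Franks' theorem then realizes every rational vector in $\textrm{int}(\rho(G))$ by a $g$-periodic orbit, and finite pigeonhole on the (finitely many) basic pieces of $g$ produces a single basic piece $\Lambda$ of $g$ containing an infinite family $\{p_i\}$ of periodic orbits with pairwise distinct rational rotation vectors $v_i$; in particular $\textrm{conv}(\rho_{\Lambda}(G))$ is not a singleton. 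If $\Lambda$ happens to be totally disconnected, Remark \ref{topbasicpiece} makes it a saddle piece containing a fixed point, and Theorem \ref{wildminsetaxa} already yields a minimal set $\mathcal{M}'\subset\Lambda$ of $g$ with $\rho_{\mathcal{M}'}(G)$ a non-trivial segment; the set $\mathcal{M}=\bigcup_{k=0}^{n-1}f^k(\mathcal{M}')$ is then a minimal set of $f$ with $\rho_{\mathcal{M}}(F)=\frac{1}{n}\rho_{\mathcal{M}'}(G)$ a non-trivial segment.

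Otherwise, by Remark \ref{topbasicpiece} $\Lambda$ is a non-trivial attractor or repeller; I treat the attractor case (the repeller case is handled analogously by passing to $f^{-1}$). Fix a trapping region $U$ with $g(\textrm{cl}[U])\subset U$ and $\Lambda=\bigcap_{m\in\N}g^m(U)$, and let $\varepsilon_0>0$ be the constant from Theorem \ref{thm1} for the triple $(g,\Lambda,U)$. Using Theorem \ref{dense}, I would choose a fitted axiom A diffeomorphism $\tilde g\in\homeo$ with $d_0(\tilde g,g)<\varepsilon_0$. Theorem \ref{thm1} then furnishes a non-empty compact connected $\tilde\Lambda=\bigcap_{m\in\N}\tilde g^m(U)$, a continuous surjection $h:\tilde\Lambda\to\Lambda$ satisfying $h\circ\tilde g=g\circ h$, and lifts $\tilde G$, $G$ chosen so that the integer shift in item (3) of Theorem \ref{thm1} vanishes, yielding $H\circ\tilde G=G\circ H$ and $\rho_{\tilde\Lambda}(\tilde G)=\rho_{\Lambda}(G)$.

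The core step is to exhibit a single basic piece of $\tilde g$ inside $\tilde\Lambda$ with non-singleton rotation set. Since $\tilde g$ is fitted axiom A, $\tilde\Lambda\cap\Omega(\tilde g)=\Lambda_1'\cup\dots\cup\Lambda_k'$ is a finite disjoint union of totally disconnected basic pieces of $\tilde g$; after absorbing one more power I may assume each contains a fixed point. For every $i$ the preimage $h^{-1}(p_i)\subset\tilde\Lambda$ is a non-empty compact $\tilde g$-invariant set (since $h$ intertwines the dynamics), and the lift identity forces every $\tilde g$-orbit inside $h^{-1}(p_i)$ to have $\tilde G$-rotation vector exactly $v_i$. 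Picking by Zorn's lemma a minimal subset $M_i\subset h^{-1}(p_i)$, minimality together with the fact that $\Omega(\tilde g)\cap\tilde\Lambda$ is the disjoint union of the $\tilde g$-invariant pieces $\Lambda_j'$ forces $M_i\subset\Lambda_{j(i)}'$ for a unique $j(i)\in\{1,\dots,k\}$. Finite pigeonhole on $\{1,\dots,k\}$ against the infinite family of distinct $v_i$'s yields indices $i_1\neq i_2$ with $j(i_1)=j(i_2)=:j_0$, so $\rho_{\Lambda_{j_0}'}(\tilde G)\supset\{v_{i_1},v_{i_2}\}$ is not a singleton.

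Applying Theorem \ref{wildminsetaxa} to $\tilde g|_{\Lambda_{j_0}'}$ now delivers a minimal set $\tilde{\mathcal{M}}\subset\Lambda_{j_0}'\subset\tilde\Lambda$ of $\tilde g$ whose rotation set is a non-trivial segment. The image $\mathcal{M}'=h(\tilde{\mathcal{M}})\subset\Lambda$ is minimal for $g$ (continuous surjective image of a minimal set under a semi-conjugacy), and the lift identity gives $\rho_{\mathcal{M}'}(G)=\rho_{\tilde{\mathcal{M}}}(\tilde G)$, a non-trivial segment; closing under $f$ as in the first paragraph completes the proof. I expect the main obstacle to be precisely the pigeonhole step of the previous paragraph: the convex-hull identity $\textrm{conv}(\rho_{\tilde\Lambda}(\tilde G))=\textrm{conv}(\bigcup_j\rho_{\Lambda_j'}(\tilde G))$ coming from property (3) of rotation sets is a priori compatible with every single piece $\Lambda_j'$ carrying a singleton rotation vector, the non-singleton $\rho_{\Lambda}(G)$ being recovered only through heteroclinic chains inside $\tilde\Lambda$. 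Pulling the periodic orbits $p_i$ of $g$ back through $h$ and confining their minimal subsets to individual basic pieces of $\tilde g$ is what rules this pathology out.
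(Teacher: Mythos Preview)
Your proposal is correct and is precisely the fleshing-out of the one-line indication the paper gives (``by the same techniques as in the proof of Theorem~\ref{thm2}''): pass to a power so every basic piece carries a fixed point, locate via Franks and pigeonhole a piece $\Lambda$ with non-singleton rotation set, and if $\Lambda$ is a non-trivial attractor/repeller, transfer through Theorem~\ref{thm1} to a fitted perturbation $\tilde g$ and push a $\tilde g$-minimal set forward by $h$. Your pull-back-and-pigeonhole argument with the infinite family $\{p_i\}$ is a clean and correct way to resolve the concern you flag (that $\textrm{conv}(\rho_{\tilde\Lambda}(\tilde G))$ non-singleton does not by itself single out one $\Lambda_j'$), a detail the paper's sketch leaves implicit; just note that ``absorbing one more power'' means passing to $\tilde g^m$, so the minimal sets $M_i$ must be retaken for $\tilde g^m$ before the final pigeonhole and application of Theorem~\ref{wildminsetaxa}.
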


Let $\mathcal{A}\subset \homeo$ be defined as $\mathcal{A}=\{f\in\homeo:\textrm{int}(\rho(F))\neq \phi\mbox{ for some $F$ lift of $f$}\}$. As shown in \cite{MiZi2} this set is open in $\homeo$.

\begin{cor}\label{abun1}

There exists an open and dense set $\mathcal{D}'\subset \mathcal{A}$, such that any $f\in\mathcal{D}'$ has a minimal set $\mathcal{M}$ with $\rho_{\mathcal{M}}(F)$ given by a non trivial segment.

\end{cor}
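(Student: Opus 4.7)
The plan is to produce $\mathcal{D}'$ as a union of semi-stability neighborhoods of carefully chosen fitted axiom A diffeomorphisms. By Theorem \ref{dense}, the family $\mathcal{F}$ of fitted axiom A diffeomorphisms is dense in $\homeo$, and since $\mathcal{A}$ is open in $\homeo$ by the result of Misiurewicz and Ziemian cited in the excerpt, the subfamily $\mathcal{F}\cap\mathcal{A}$ is dense in $\mathcal{A}$. Let us set
$$\mathcal{D}':=\bigcup_{\tilde{f}\in\mathcal{F}\cap\mathcal{A}}\mathcal{U}(\tilde{f}),$$
where $\mathcal{U}(\tilde{f})$ is the semi-stability neighborhood given by Corollary \ref{corneigh} (so in particular $\rho(G)=\rho(\tilde{F})$ for any $g\in\mathcal{U}(\tilde{f})$, and $g$ is semi-conjugate to $\tilde{f}$ by some $h$ homotopic to the identity with $h\circ g=\tilde{f}\circ h$). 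By construction $\mathcal{D}'$ is open, and density in $\mathcal{A}$ will follow once we show $\mathcal{D}'$ has the asserted minimal-set property, since then every $\tilde{f}\in\mathcal{F}\cap\mathcal{A}$ lies in $\mathcal{D}'$ and $\mathcal{F}\cap\mathcal{A}$ is dense in $\mathcal{A}$.

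Fix $\tilde{f}\in\mathcal{F}\cap\mathcal{A}$. The previous Corollary (applied to $\tilde{f}$, which is fitted axiom A with non-empty interior rotation set) provides a minimal set $\mathcal{M}\subset\T^2$ of $\tilde{f}$ such that $\rho_{\mathcal{M}}(\tilde{F})$ is a non-trivial segment. Now take any $g\in\mathcal{U}(\tilde{f})$ with its semi-conjugacy $h$, and consider the closed $g$-invariant set $h^{-1}(\mathcal{M})$. By Zorn's lemma we may pick inside it a minimal set $\mathcal{M}'$ of $g$; then $h(\mathcal{M}')$ is a closed $\tilde{f}$-invariant subset of $\mathcal{M}$, hence equals $\mathcal{M}$ by minimality of $\mathcal{M}$.

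The key point is then to transfer the rotation set equality to the level of lifts. Since $h$ is homotopic to the identity, it lifts to a map $H:\R^2\to\R^2$ commuting with $\Z^2$-translations and with $d_0(H,\mathrm{Id})$ finite; choosing a lift $G$ of $g$ compatible with $\tilde{F}$, one has $H\circ G=\tilde{F}\circ H$ on $\pi^{-1}(h^{-1}(\mathcal{M}))$. Hence for every $x\in\pi^{-1}(\mathcal{M}')$ the orbit $\{G^n(x)\}$ stays at bounded distance from $\{\tilde{F}^n(H(x))\}$, and conversely surjectivity of $h|_{\mathcal{M}'}$ onto $\mathcal{M}$ lets one invert this correspondence. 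It follows by the definition of $\rho_C(F)$ that $\rho_{\mathcal{M}'}(G)=\rho_{\mathcal{M}}(\tilde{F})$, a non-trivial segment. So $g$ has the desired minimal set, proving the minimal-set property for all $g\in\mathcal{D}'$ and, combined with density of $\mathcal{F}\cap\mathcal{A}$ in $\mathcal{A}$, the density of $\mathcal{D}'$ in $\mathcal{A}$.

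The two places that require care are the verification that property (2) of rotation sets from the Introduction can be promoted from the torus to the lift level (using homotopy of $h$ to the identity and the standard $H$ close to $\mathrm{Id}$ construction), and the extraction of the $g$-minimal subset $\mathcal{M}'$ of $h^{-1}(\mathcal{M})$ together with the observation that $h(\mathcal{M}')=\mathcal{M}$ by minimality. Everything else is a direct combination of Theorem \ref{dense}, Corollary \ref{corneigh}, the openness of $\mathcal{A}$ (cited from \cite{MiZi2}), and the previous Corollary that produces the segment-rotating minimal set inside any fitted axiom A diffeomorphism in $\mathcal{A}$.
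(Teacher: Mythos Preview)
Your argument is correct and follows essentially the same route as the paper: the paper defines $\mathcal{D}':=\mathcal{D}\cap\mathcal{A}$ (which coincides with your union of semi-stability neighborhoods over $\mathcal{F}\cap\mathcal{A}$, since $\rho(g)=\rho(\tilde f)$ on each $\mathcal{U}(\tilde f)$), invokes Corollary~\ref{abun0} to get a minimal set with segment rotation set for the fitted axiom~A model, and then pulls it back through the semi-conjugacy exactly as you do. The only difference is cosmetic---you spell out the lift argument for $\rho_{\mathcal{M}'}(G)=\rho_{\mathcal{M}}(\tilde F)$, whereas the paper simply cites property~(2) of the rotation set from the Introduction.
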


\begin{proof}

Let $\mathcal{D}$ be the set constructed in Section \ref{TEO1}, and $\mathcal{D}':=\mathcal{D}\cap\mathcal{A}$. Thus by construction of $\mathcal{D}$ for any element $g\in\mathcal{D}'$ there exist a fitted axiom A diffeomorphism $f\in\mathcal{D}'$ and  a homotopic to the identity map $h:\T^2\rightarrow \T^2$ such that $h$ semi-conjugates $g$ with $f$ ($h\circ g=f\circ h$).

\ Due to the corollary \ref{abun0} we have the existence of a minimal set $\mathcal{M}'$ of $f$ such that $\rho_{\mathcal{M}'}(F)$ is a non trivial segment for some lift $F$ of $f$. Le us define the $g$-invariant set $\mathcal{M}''=h^{-1}(\mathcal{M}')$, and consider a minimal set $\mathcal{M}\subset \mathcal{M}''$ of $g$. Hence, since $\mathcal{M}'$ is a minimal set of $f$ we have necessarily that $h(\mathcal{M})=\mathcal{M}'$. Thus by the properties of the rotation set we have that $\rho_{\mathcal{M}}(G)$ is a non-trivial segment.

\end{proof}

Finally, inspired by the result given in \cite{MiZi2} we pose the following question:

\ Given $f\in\mathcal{A}$ and a continuum $C\subset int(\rho(F))$, does there exists a minimal set $\mathcal{M}$ of $f$ such that $\rho_{\mathcal{M}}(F)=C$?

\end{section}


\begin{thebibliography}{pusus}

\bibitem[Be]{Be} Beguin F. Smale diffeomorphisms of surfaces: an algorithm for the conjugacy problem. Annales de la facult\'{e} des sciencies de Tolouse 8 (3) 1999.

\bibitem[Bo]{bow} Bowen R. Markov partitions for Axiom A diffeomorphisms. Amer. J. Math. 92 (1970), 725-747.

\bibitem[D]{Da} D\'{a}valos P. On homeomorphisms whose rotation set is an interval. Doctoral Thesis. Instituo de Matem\'{a}tica Pura e Aplicada, Brazil.

\bibitem[F1]{F1} Franks J. Recurrence and fixed points of surface homeomorphisms. Ergod. Theory Dynam. Systems. 8* (1988), 99-107.

\bibitem[F2]{F2} Franks J. Realizing rotation vectors for torus homeomorphisms. Trans. Amer. Math. Soc. 311 (1989), 107-116.

\bibitem[F3]{Franks} Franks J. Homology and dynamical systems. CBMS, vol 49, Amer. Math. Soc.

\bibitem[J]{Ja} Jaeger T. Linearization of conservative toral homeomorphisms. Invent. math. 176 (2009), 601-616.

\bibitem[KH]{Katok} Katok A, Hasselblatll B. Introduction to the moderm theory of the dynamical systems. Cambrige Univ. Press.

\bibitem[K1]{Kw1} Kwapisz J. Every convex polygon with rational vertices is a rotation set. Ergod. Theory Dynam. Systems 12 (1992), 333-339.

\bibitem[K2]{Kw2} Kwapisz J. A toral diffeomorphism with a nonpolygonal rotation set. Nonlinearity 8 (1995), 461-476.

\bibitem[LM]{LiMa} Llibre J, MacKay R. Rotation vectors and entropy for homeomorphisms of the torus isotopic to the identity. Ergod. Theory Dynam. Systems. 11 (1991), 115-128.

\bibitem[MZ1]{MiZi1} Misiurewicz M, Ziemian K. Rotation set for maps of the tori. J. London Math. Soc. 40 (1989), 490-506.

\bibitem[MZ2]{MiZi2} Misiurewicz M, Ziemian K. Rotation sets and ergodic measures for torus homoeomorphisms. Fund. Math. 137 (1991), 45-52.

\bibitem[N]{new} Newhouse S. Hyperbolic limit sets, Trans. Amer. Math. Soc. 167 (1972), 125-150.

\bibitem[S]{Shub} Shub M. Structurally stable diffeomorphisms are dense. Bull. Amer. Math. Soc. 5 (1972), 817-818.

\bibitem[SS]{SS} Shub M, Sullivan D. Homology and dynamical systems. Topology. 14 (1975), 109-132.

\bibitem[Ni]{Zni} Zbigniew Niteki. On Semi-Stability for Diffeomorphisms. Inventiones math. 14 (1971), 83-122.

\bibitem[Z]{Zana} Addas-Zanata S. Instability for the rotation set of homeomorphisms of the torus homotopic to the identity. Erd. Th. and Dyn. sys. 24 (2004), 319-328.

\bibitem[Zi]{Zi} Ziemian K. Rotation sets for subshifts of finite type, Fund. Math. 146 (1995), 189-201.

\end{thebibliography}
\end{document}